\documentclass[11pt]{article}
\usepackage{amsmath,amssymb,amsthm}
\usepackage{graphicx}
\usepackage{enumerate}
\usepackage{geometry,graphicx}
\usepackage[x11names,usenames,dvipsnames,svgnames,table]{xcolor}
\usepackage{a4wide}
\usepackage{pdfsync}
\usepackage{epstopdf}
\usepackage{comment}
\geometry{verbose,lmargin=1.5cm,rmargin=1.5cm,tmargin=1cm}
\numberwithin{equation}{section}
\newtheorem{theorem}{Theorem}[section]

\newtheorem{corollary}{Corollary}[section]

\newtheorem{lemma}{Lemma}[section]
\theoremstyle{definition}
\newtheorem{remark}{Remark}[section]
\newtheorem{definition}{Definition}[section]
\newtheorem{example}{Example}[section]
\newtheorem{assumption}{Assumption}

\newcommand{\ie}{{\em i.e.}, }
\newcommand{\eg}{{\em e.g.}, }
\newcommand{\cf}{{\em cf.\ }}

\newcommand{\nn}{\mathbb{N}} 
\newcommand{\norm}[1]{\left\Vert {#1} \right\Vert} 
\newcommand{\erl}{\left(-\infty , +\infty\right]} 
\newcommand{\ridom}[1]{\mathrm{ri(dom\,{#1}})} 

\newcommand{\argmin}{\operatorname{argmin}}

\newcommand{\act}[1]{\left\langle {#1} \right\rangle} 
\newcommand{\seq}[2]{\left\{{#1}_{{#2}}\right\}_{{#2} \in \mathbb{N}}}
\newcommand{\Seq}[2]{\left\{{#1}^{{#2}}\right\}_{{#2} \in \mathbb{N}}}

\newcommand{\Lag}{\mathcal{L}} 
\newcommand{\AAA}{\mathcal{A}}

\newcommand{\FFF}{\mathcal{F}}

\newcommand{\PPP}{\mathcal{P}}

\newcommand{\bo}{{\bf 0}}

\newcommand{\bbS}{\mathbb{S}}
\newcommand{\real}{\mathbb{R}} 
\newcommand{\rr}{\mathbb{R}} 
\newcommand{\nice}{nice\,\,}
\newcommand{\SSS}{\operatorname{Prim}}

\def\bl{\color{blue}}
\title{Faster Lagrangian-Based Methods in Convex Optimization}
\author{Shoham Sabach\footnote{Faculty of Industrial Engineering and Management, The Technion, Haifa, 32000, Israel. E-mail: ssabach@ie.technion.ac.il.}\and Marc Teboulle\footnote{School of Mathematical Sciences, Tel-Aviv University, Ramat-Aviv 69978, Israel. E-mail: teboulle@post.tau.ac.il. This research was partially supported by the Israel Science Foundation, under ISF Grants 1844-16, and 2619-20.}}
\date{\today}

\begin{document}
\maketitle

\begin{abstract}
In this paper, we aim at unifying, simplifying and improving the convergence rate analysis of Lagrangian-based methods for convex optimization problems. We first introduce the notion of nice primal algorithmic map, which plays a central role in the unification and in the simplification of the analysis of most Lagrangian-based methods. Equipped with a \nice primal algorithmic map, we then introduce a versatile generic scheme, which allows for the design and analysis of Faster LAGrangian (FLAG) methods with new provably sublinear rate of convergence expressed in terms of function values and feasibility violation of the original (non-ergodic) generated sequence. To demonstrate the power and versatility of our approach and results, we show that most well-known iconic Lagrangian-based schemes admit a \nice primal algorithmic map, and hence share the new faster rate of convergence results within their corresponding FLAG.
\end{abstract}

\noindent {\bfseries 2010 Mathematics Subject Classification:}  90C25, 65K05.

\noindent {\bfseries Keywords:} Augmented Lagrangian, Lagrangian multiplier methods, proximal multiplier algorithms, convex composite minimization, alternating direction method of multiplier, non-smooth optimization, fast non-ergodic global rate of convergence.

\section{Introduction} \label{Sec:Intro}
	Over the last decade, we are witnessing a resurgence of Lagrangian-based methods. This surge of interest is due to the flexibility that they provide in exploiting special problem structures that abound in modern and disparate applications (\eg image science, machine learning, communication systems). Augmented Lagrangian methods (also known as multiplier methods) which are a typical prototype, were invented about half a century ago by Hestenes \cite{H1969} and Powell \cite{P1969}, are the core of the analysis and development of many extensions of Lagrangian-based schemes capable of tackling optimization problems with various types of structures and constraints. This is reflected by a rich and very large volume of literature on various practical and theoretical aspects of many Lagrangian-based methods. Two classical references providing the central developments and ideas underlying Lagrangian-based and related decomposition methods are the books of Bertsekas \cite{B82-B}, and Bertsekas and Tsitsiklis \cite{BT1989-B}, which include a comprehensive list of relevant references for these developments until the nineties. Here, one should also mention the pioneer works of Rockafeller \cite{R1976,R1976:1} on the convergence analysis of proximal AL methods. For more modern developments, results, and applications, see \eg the more recent works \cite{BPCPE2011, ST2014},  \cite{LM2016,CP2016-B, ST2019}, and references therein.
\medskip

	A recent and voluminous literature on Lagrangian-based methods has focused on rate of convergence results for the so-called Alternating Direction of Multipliers Method \cite{GM75,GM1976,FG1983-B,EB92}, with an almost absolute predominance on {\em ergodic} type of rate of convergence results. Since the literature on this topic is very wide, let us just mention  the first central works \cite{CP2011,HY2012,MS2013}, which have obtained an ergodic rate of $O(1/N)$. For many more rate of convergence results of the ergodic type, and relevant references, we refer the readers to, \eg \cite{CP2016}, and the very recent monograph \cite{LLF2020}, which also includes an up to date extensive bibliography.
\medskip

	The focus of this paper is on faster and non-ergodic global rate of convergence analysis for Lagrangian-based methods applied to convex optimization, where the rate is measured in terms of function values and feasibility violation of the original produced sequence. The literature on papers obtaining non-ergodic rate of convergence results remains very limited, in particular, in term of the measures just mentioned. For instance, in \cite{HY2015}, the authors obtained a non-ergodic $O(1/N)$ result for the squared distance between two successive primal iterates, while in \cite{CP2011}, a non-ergodic $O(1/N^{2})$ result, in the strongly convex setting, is proven for the squared distance between the primal iterates to an optimal primal solution. One exception is the very recent work \cite{LL2019}, which derives a new non-ergodic $O(1/N)$ rate of convergence result in terms of function values and feasibility violation of the original produced sequence for the specific Linearized ADMM, through a rather involved and nontrivial analysis (compare with the results developed in Section \ref{Sec:Examples} for this particular algorithm).
\medskip
	
	The analysis of Lagrangian-based methods is usually intricate and complicated, which results in lengthy proofs. Recently, in \cite{ST2019}, we have shown that most well-known Lagrangian-based methods and their decomposition variants can be captured and analysed through a single scheme (that we called the Perturbed Proximal Method of Multipliers, see details in \cite{ST2019}). It allows us to derive an ergodic $O(1/N)$  rate of convergence in terms of function values and feasibility violation. Much like most of the existing literature, the approach we developed there was limited to Lagrangian-based methods with classical sublinear ergodic type results, and hence naturally leads to the question: {\em Can we develop an algorithmic framework capable of producing non-ergodic rate as well as faster Lagrangian-based schemes?} In this paper, we provide a positive answer, within a surprisingly simple and elegant approach. This is developed in four main sections which contents are now briefly outlined.
\medskip	
	
	The formulation of the problem under study, some basic preliminaries on Lagrangians in the convex setting, and some important examples of convex optimization problems which fit our model, can be found in Section \ref{Sec:Form}. The starting point of our study is that most Lagrangian-based methods can be simply described along the following protocol. Consider the corresponding augmented Lagrangian of the problem at hand, which consists of primal and dual (multiplier) variables, building a Lagrangian-based method requires to choose updating rules for both the primal and the dual variables. Since, in most Lagrangian-based methods, the dual variable is updated via an explicit formula, the main difference between Lagrangian-based methods is encapsulated in the choice of a {\em primal algorithmic map} which updates the primal variables. This choice is governed by the problem's structure and data information, which must be best exploited, and can be seen as a single step of any appropriate optimization method that applied on the augmented Lagrangian itself, or a variation of it. For instance, the classical Augmented Lagrangian method \cite{H1969,P1969}, is a typical example whereby the primal algorithmic map is nothing else but an {\em exact minimization} applied on the augmented Lagrangian itself associated to problem (P).
\medskip

	Adopting this point of view for Lagrangian-based methods, we introduce in Section \ref{Sec:GL} the notion of {\em nice primal algorithmic map}, which will play a central role in unifying the analysis of most Lagrangian-based methods into a unitary and simple framework. In turns, equipped with a \nice primal algorithmic map, we propose a generic algorithmic framework, called FLAG for Faster LAGrangian based methods that aims at achieving faster (as well as classical) convergence rates of most Lagrangian-based methods {\em at once}. FLAG which is equipped with a \nice primal algorithmic maps allows for the design and analysis of faster schemes with new provable sublinear {\em Non-ergodic} rate of $O(1/N^{2})$ in the strongly convex case and a {\em Non-ergodic} $O(1/N)$ in the convex case (see Theorem \ref{T:FGLrate} and Theorem \ref{T:GLrate}, respectively). As a by product, we also derive classical and faster ergodic results in Corollaries \ref{C:FGLrate} and \ref{C:GLrate}; see Section \ref{Sec:AnalysisGL}. To illustrate the power and versatility of FLAG and our new results, in Section \ref{Sec:Examples}, we consider various block and other models of problem (P) and demonstrate that most well-known iconic schemes, \eg Linearized Proximal Method of Multipliers, ADMM and its linearized versions, Jacobi type decomposition, Predictor Corrector Proximal Multipliers, and many more, all admit a \nice primal algorithmic map, and hence can be used in FLAG to obtain new rate of convergence results. This provides a significant list of faster schemes  for fundamental well-known Lagrangian-based methods.
		
\section{Problem Formulation} \label{Sec:Form}
	In this paper, we consider the linearly constrained convex optimization model,
	\begin{equation*}
		\mbox{(P)} \quad\quad \min  \left\{ \Psi\left(x\right) \, : \, \AAA x = b, \; x \in \real^n\right\},
	\end{equation*}
	where $\Psi:\real^{n} \rightarrow \erl$ is a proper, lower semi-continuous and convex function, $\AAA: \real^n\to \real^m$ is a linear mapping, and $b \in \real^{m}$. Throughout this work, the feasible set of problem (P) is denoted by  $\FFF = \left\{ x \in \rr^{n} : \, \AAA x = b \right\}$.
\medskip
	
	 Despite its apparent simplicity, with an additional structural/data information on the objective function or/and on the linear mapping (see examples below), the optimization model (P) essentially captures a very wide range of convex optimization problems arising in many modern applications. Since we are interested in analyzing this model also in the strongly convex case, we instead assume that $\Psi\left(\cdot\right)$ is $\sigma$-strongly convex with $\sigma \geq 0$ (\ie when $\sigma = 0$ we recover the convex setting). At this juncture, it is important to mention that in many applied problems the function $\Psi$ is defined by the sum of two or more functions, see \eg \cite{CP2016-B}. In such cases,  it is enough to assume that {\em one} of the functions is $\sigma$-strongly convex, see Section \ref{SSec:Block} for more details.
\medskip
	
	Let us illustrate a few basic and important instances of model (P).
	\begin{example}[Linear composite model] \label{E:LCM}
		The linear composite optimization model, which naturally appears in many applications, fuels very much the study of Lagrangian-based methods and is given by
		\begin{equation} \label{M:LC}
			\min_{u \in \real^{p}} \left\{ f\left(u\right) + g\left(Au\right) \right\},
		\end{equation}
		where $f : \real^{p} \rightarrow \erl$ and $g : \real^{q} \rightarrow \erl$ are proper, lower semi-continuous and convex functions, and $A : \real^{p} \rightarrow \real^{q}$ is a linear mapping. Rewriting problem \eqref{M:LC} as an equivalent linearly constrained optimization problem (which requires the definition of an auxiliary variable) yields
		\begin{equation*}
			\min_{u \in \real^{p} , v \in \real^{q}} \left\{ f\left(u\right) + g\left(v\right) : \, Au = v \right\}.
		\end{equation*}
		We easily see that it fits into model (P), with $x = \left(u^{T} , v^{T}\right)^{T}$, $\Psi\left(x\right) := f\left(u\right) + g\left(v\right)$ and $\AAA x = Au - v$ (\ie $b = \bo$).
	\end{example}
	\begin{example}[Block linear constrained model] \label{E:BLCM}
		Another useful and more general model which includes \eqref{M:LC}  is the following
		\begin{equation} \label{M:BLC}
			\min_{u \in \real^{p} , v \in \real^{q}} \left\{ f\left(u\right) + g\left(v\right) : \, Au + Bv = b \right\},
		\end{equation}
		where $f : \real^{p} \rightarrow \erl$ and $g : \real^{q} \rightarrow \erl$ are proper, lower semi-continuous and convex functions, $A : \real^{p} \rightarrow \real^{m}$ and $B : \real^{q} \rightarrow \real^{m}$ are linear mappings. Again we can easily see that it fits into model (P), with $x = \left(u^{T} , v^{T}\right)^{T}$, $\Psi\left(x\right) := f\left(u\right) + g\left(v\right)$ and $\AAA x = Au + Bv$.
	\end{example}	
	\begin{example}[Additive smooth/non-smooth composite model] \label{E:ASNS}
		A very classical model in optimization minimizes an additive composite objective, which is the sum of two functions one which is smooth and one which is not necessarily smooth. Here we consider this model with an additional linear constraint, that is,
		\begin{equation} \label{M:AS}
			\min_{x \in \real^{n}} \left\{ f\left(x\right) + h\left(x\right) : \, \AAA x = b \right\},
		\end{equation}
		where $f : \real^{p} \rightarrow \erl$ is a proper, lower semi-continuous and convex function, while $h : \real^{q} \rightarrow \rr$ is a continuously differentiable function with a Lipschitz continuous gradient. Again we easily see that it fits into model (P), with $\Psi\left(x\right) := f\left(x\right) + h\left(x\right)$, being the sum of a smooth function and a non-smooth function. This additional structure in the objective function can be beneficially exploited when developing an adequate Lagrangian-based method, as we discuss in details in Section \ref{SSec:Smooth}.
	\end{example}			
	We conclude this section with some basic properties of Lagrangians in convex optimization. For the equality constrained problem given in model (P), the {\em Lagrangian} $\Lag: \rr^{n} \times \rr^{m} \to \erl$ is defined by
	\begin{equation} \label{D:Lag}
		\Lag\left(x , y\right) \equiv \Psi\left(x\right) + \act{y , \AAA x - b},
	\end{equation}
	where $y \in \real^{m}$ is the {\em Lagrange multiplier} associated with the linear constraint $\AAA x = b$. In this study, we will also use the \textit{augmented Lagrangian} associated to problem (P), which is defined by 	
	\begin{equation} \label{D:AugLA}
		\Lag_{\rho}\left(x ,  y\right) := \Lag\left(x , y\right) + \frac{\rho}{2}\norm{\AAA x - b}^{2} = \Psi\left(x\right) + \act{y , \AAA x - b} + \frac{\rho}{2}\norm{\AAA x - b}^{2},
	\end{equation}
	where $\rho > 0$ is a {\em penalty parameter}. Clearly, when $\rho = 0$ we recover the Lagrangian itself, \ie $\Lag \equiv \Lag_{0}$.
\medskip

	Throughout this work we make the following standing assumption.
	\begin{assumption} \label{A:ConvLag}
		The Lagrangian $\Lag$ has a saddle point, that is, there exists a pair $\left(x^{\ast} , y^{\ast}\right)$ such that
		\begin{equation} \label{A:ConvLag:1}
			\Lag\left(x^{\ast} , y\right) \leq \Lag\left(x^{\ast} , y^{\ast}\right) \leq \Lag\left(x , y^{\ast}\right), \quad \forall \,\, x \in \real^{n}, \quad \forall \,\, y\in \real^{m}.
		\end{equation}
	\end{assumption}	
	\noindent It is well-known that $\left(x^{\ast} , y^{\ast}\right)$ is a saddle point of $\Lag$ if and only if the following conditions hold (see \cite{R1970-B}):
	 \begin{itemize}
		\item[$\rm{(i)}$] $x^{\ast}$ is a solution of problem (P), \ie $\Psi\left(x^{\ast}\right)$ is finite valued and $\AAA x^{\ast} = b$.
		\item[$\rm{(ii)}$] The multiplier $y^{\ast}$ is an optimal solution of the dual problem associated to problem (P), given by
			\begin{equation*}
				\mbox{(D)} \qquad d^{\ast} = \sup_{y \in \real^{m}} \left\{ d\left(y\right) := -\Psi^{\ast}\left(-\AAA^{T}y\right) - \act{y , b} \right\},
			\end{equation*}
			and $p^{\ast} := \Lag\left(x^{\ast} , y^{\ast}\right)  = d^{\ast}$, where $\Psi^{\ast}\left(\cdot\right)$ stands for the Fenchel conjugate function of $\Psi$ (see \cite{R1970-B}).
	\end{itemize}
	Thanks to the convexity of problem (P), recall that the existence of an optimal dual Lagrange multiplier $y^{\ast}$ attached to the constraint $\AAA x = b$ is ensured under the standard constraint qualification for problem (P), that can be formulated as follows: there exists a ${\bar x} \in \ridom{\Psi}$ satisfying $\AAA{\bar x} = b$, and that strong duality holds, \ie $p^{\ast} = d^{\ast} \in \real$. Moreover, the set of optimal dual solutions is non-empty, convex and compact, see \eg \cite{R1970-B}.
\medskip
	
	We end this section with some notations that will be used throughout the paper. We denote by $\bbS_{+}^{n}$ and $\bbS_{++}^{n}$ the sets of all $n \times n$ {\em positive semi-definite matrices} and {\em positive definite matrices}, respectively. Given $P \in \bbS_{+}^{n}$, $\norm{u}_{P} := \sqrt{\act{u , Pu}}$ stands for the semi-norm of $u \in \real^{n}$. For any $u , v , w \in \real^{n}$, we define the following quantity 		
	\begin{equation} \label{D:Delta}
		\Delta_{P}\left(u , v , w\right) := \frac{1}{2}\left(\norm{u - v}_{P}^{2} - \norm{u - w}_{P}^{2}\right).
	\end{equation}
	In the case that $P = I$, where $I_{n}$ denotes the $n \times n$ {\em identity matrix}, we simply denote $\Delta := \Delta_{P}$.

\section{A Unified Framework for Lagrangian-based Methods} \label{Sec:GL}
	The starting point of our study is that most Lagrangian-based methods can be simply described along the following unified approach. Considering the corresponding augmented Lagrangian of the problem at hand, which consists of primal and dual (multiplier) variables, building a Lagrangian-based method requires to choose updating rules for both the primal and the dual variables. Since, in most Lagrangian-based methods, the dual variable is updated via an explicit formula (see \eqref{GL:2} below), the main difference between Lagrangian-based methods is encapsulated in the choice of a {\em primal algorithmic map} that updates the primal variables. Therefore, any Lagrangian-based method updates a couple $\left(z , y\right)$ by first choosing a primal algorithmic map $\PPP$, and will update the primal and dual (multiplier) variables via
	\begin{align}
		z^{+} & \in \PPP\left(z , y\right),  \label{GL:1} \\
		y^{+} & = y + \mu\rho\left(\AAA z^{+} - b\right), \label{GL:2}
	\end{align}
	where $\mu > 0$ is a scaling parameter. The update of the primal variable which depends on the algorithmic map $\PPP$, can be seen as a single step of any optimization method applied on the augmented Lagrangian itself or a variation of it. For example, the classical Augmented Lagrangian method \cite{H1969,P1969}, is a typical prototype where the mapping $\PPP$ is nothing but an {\em exact minimization} applied on the augmented Lagrangian itself.
\medskip

	Adopting this point of view of Lagrangian-based methods, we propose a general framework that aims at achieving convergence rates (classical and fast) of most Lagrangian-based methods at once. Considering the optimization problem (P) with the data $[\Psi, \AAA, b, \sigma]$ and its associated augmented Lagrangian $\Lag_{\rho}\left(\cdot\right)$, which is defined in \eqref{D:AugLA}, we introduce the notion of {\em nice primal algorithmic map} that plays a central role in unifying the analysis of most Lagrangian-based methods into a single and simple framework.
	\begin{definition}[Nice primal algorithmic map] \label{D:AlgoMapNice}
		Given the parameters $\rho , t > 0$, we let $\rho_{t} = \rho$ and $\tau_{t} = 1$ (when $\sigma = 0$) or $\rho_{t} = \rho t$ and $\tau_{t} = t$ (when $\sigma > 0$). A primal algorithmic map $\SSS_{t} : \real^{n} \times \real^{m} \rightarrow \real^{n}$, which is applied on the augmented Lagrangian $\Lag_{\rho_{t}}\left(z , \lambda\right)$, that generates $z^{+}$ via $z^{+} \in \SSS_{t}\left(z , \lambda\right)$, is called {\em \nice}\hspace{-0.05in} if there exist a parameter $\delta \in \left(0 , 1\right]$ and matrices $P , Q \in \bbS_{+}^{n}$, such that for any $\xi \in \FFF$ we have
		\begin{equation} \label{D:AlgoMapNice:1}
			\Lag_{\rho_{t}}\left(z^{+} , \lambda\right) - \Lag_{\rho_{t}}\left(\xi , \lambda\right) \leq \tau_{t}\Delta_{P}\left(\xi , z , z^{+}\right) - \frac{\tau_{t}}{2}\norm{z^{+} - z}_{Q}^{2} - \frac{\sigma}{2}\norm{\xi - z^{+}}^{2} - \frac{\delta\rho_{t}}{2}\norm{\AAA z^{+} - b}^{2}.
		\end{equation}				
	\end{definition}		
	The essence of our ability to both unify the analysis,  and to accelerate the convergence rate of most Lagrangian-based methods at once, comes from the algorithmic framework that we introduce now. This framework crucially uses as an input a \nice primal algorithmic map $\SSS_{t}\left(\cdot\right)$ with the induced parameter $\delta \in \left(0 , 1\right]$ and matrices $P , Q \in \bbS_{+}^{n}$.
\medskip

    \begin{center}
    		\fbox{\parbox{16.5cm}{{\bf FLAG -- Faster LAGrangian based method}
			\begin{enumerate}
				\item[1.] {\bf Input:} The problem data $[\Psi, \AAA, b, \sigma]$, and a \nice primal algorithmic map $\SSS_{t}\left(\cdot\right)$.
				\item[2.] {\bf Initialization:} Set $\mu \in \left(0 , \delta\right]$ and $\rho > 0$. Start with any $\left(x^{0} , z^{0} , y^{0}\right) \in \rr^{n} \times \rr^{n} \times \rr^{m}$, $t_{0} = 1$.
            		\item[3.] {\bf Iterations:} Sequences $\left\{ \left(x^{k} , z^{k} , y^{k}\right) \right\}_{k \in \nn}$ and $\seq{t}{k}$ are generated as follows: for each $k \geq 0$
                		\begin{enumerate}
                    		\item[3.1.] If $\sigma = 0$, let $\rho_{k} = \rho$, or, if $\sigma > 0$, let $\rho_{k} = \rho t_{k}$. Compute
                    			\begin{equation} \label{GL:Lambda}
                    				\lambda^{k} = y^{k} + \rho_{k}\left(t_{k} - 1\right)\left(\AAA x^{k} -b\right).
                    			\end{equation}    		
                    		\item[3.2.] Update the sequence $\left\{ \left(x^{k} , z^{k} , y^{k}\right) \right\}_{k \in \nn}$ by
                       		\begin{align}
                           		z^{k + 1} & \in \SSS_{t_{k}}\left(z^{k} , \lambda^{k}\right), \label{FGL:PriStep} \\
                           		y^{k + 1} & = y^{k} + \mu\rho_{k}\left(\AAA z^{k + 1} - b\right), \label{FGL:MultiStep} \\
                           		x^{k + 1} & = \left(1 - t_{k}^{-1}\right)x^{k} + t_{k}^{-1}z^{k + 1}. \label{FGL:AccStep}
                       		\end{align}
 						\item[3.3.] Update the sequence $\seq{t}{k}$ by solving the equation $t_{k + 1}^{p} - t_{k}^{p} = t_{k + 1}^{p - 1}$, \ie
                       		\begin{equation} \label{FGL:Tk}
                        			t_{k + 1} = \left\{\begin{matrix}
                        			t_{k}  + 1, & \hspace{-0.6in} p = 1 \,\, \text{(convex case)}, \vspace{0.1in} \\
                        			(1 + \sqrt{1 + 4t_{k}^{2}})/2, & p =2 \,\, \text{(strongly convex case)}.
                        			\end{matrix} \right.
							\end{equation}
                    		\end{enumerate}
			\end{enumerate}\vspace{-0.2in}}}
	\end{center}
\vspace{0.2in}

	\begin{remark}[Few comments on the algorithmic framework FLAG]
		\begin{itemize}
			\item[$\rm{(i)}$] Borrowing ideas from well-known accelerated first order minimization methods (see, for instance, \cite{N83, AT2006, BT2009}), here the sequence $\seq{t}{k}$ plays a key role in accelerating the input map, \ie the \nice primal algorithmic map $\SSS_{t}\left(\cdot\right)$. To this end, the augmented parameter $\rho_{k}$ and the prox parameter $\tau_{k}$ are determined and chosen through the definition of the recursion which defines the sequence $\seq{t}{k}$ (\cf \eqref{FGL:Tk}).
			\item[$\rm{(ii)}$] Clearly, setting $t_{k} \equiv 1$, for all $k \in \nn$, in FLAG, implies $\rho_{k} \equiv \rho$, $\lambda^{k} \equiv y^{k}$, and $x^{k} \equiv z^{k}$, thus recovering the classical basic Lagrangian-based method discussed above.
			\item[$\rm{(iii)}$] A main new feature in our algorithmic framework FLAG is the introduction of the fundamental auxiliary sequence $\Seq{\lambda}{k}$, which will enable us to derive new faster rate of convergence results in terms of the {\em original} sequence produced by FLAG. As we shall see, in Section \ref{SSec:Ergodic}, when $\lambda^{k}$ coincides with $y^{k}$, only ergodic type rates (classical and fast) will be obtained.
		\end{itemize}
	\end{remark}
	In FLAG, once the \nice algorithmic map $\SSS_{t}\left(\cdot\right)$ has been chosen, \ie a parameter $\delta \in \left(0 , 1\right]$ and matrices $P , Q \in \bbS_{+}^{n}$, have been determined, Definition \ref{D:AlgoMapNice} translates in terms of the iterative sequence $\left\{ \left(x^{k} , z^{k} , y^{k}\right) \right\}_{k \in \nn}$ produced by FLAG as follows: for each $k \geq 0$ and $\xi \in \FFF$,
	\begin{align}
		\Lag_{\rho_{k}}\left(z^{k + 1} , \lambda^{k}\right) - \Lag_{\rho_{k}}\left(\xi , \lambda^{k}\right) & \leq \tau_{k}\Delta_{P}\left(\xi , z^{k} , z^{k + 1}\right) - \frac{\tau_{k}}{2}\norm{z^{k + 1} - z^{k}}_{Q}^{2}  - \frac{\sigma}{2}\norm{\xi- z^{k + 1}}^{2} \nonumber \\
		& - \frac{\delta\rho_{k}}{2}\norm{\AAA z^{k + 1} - b}^{2}, \label{AlgoMapNiceProperty}
	\end{align}		
	where $\tau_{k} = 1$ and $\rho_{k} = \rho$ (when $\sigma = 0$) or $\tau_{k} = t_{k}$ and $\rho_{k} = \rho t_{k}$ (when $\sigma > 0$).
\medskip

	To conclude this part, and before we prove that the property of being a \nice primal algorithmic map (as defined in Definition \ref{D:AlgoMapNice}) is \textit{all we need} to guarantee rate of convergence results (classical and fast), we would like to note that there is no need to enter into the mechanics of the proofs to be presented next in order to derive theoretical guarantees of newly developed/analyzed Lagrangian-based methods, but just proving that the primal algorithmic map at hand is \nice\hspace{-0.05in}! This will be extensively illustrated in Section \ref{Sec:Examples} whereby, after providing a {\em recipe} for obtaining rate of a given method, we will prove that most iconic Lagrangian-based schemes admit a \nice primal map, and hence through FLAG will generate new faster schemes.

\section{Convergence Analysis of FLAG} \label{Sec:AnalysisGL}
	The section is divided into three parts, where we start with two key lemmas, which will serve us in deriving the main {\em non-ergodic} rate of convergence results in Section \ref{SSec:NonErgodic}, followed by the ergodic type results in Section \ref{SSec:Ergodic}.
	
\subsection{Main Pillars of the Analysis} \label{SSec:Pillar}
	Before proceeding, we recall the following fundamental {\em Pythagoras three-points identity} that is frequently used below. Given any matrix $P \in \bbS_{+}^{n}$, we have
	\begin{equation} \label{Pyth}
		2\act{u - w , P\left(w - v\right)} =  \norm{u - v}_{P}^{2}- \norm{u - w}_{P}^{2} - \norm{v - w}_{P}^{2}, \quad \forall \,\, u , v , w \in \real^{n}.
	\end{equation}	
	We also recall the main parameter sequences introduced in FLAG and which, for convenience, will be used often and systematically as follows:
   \begin{equation*}
    		\rho_{k} = \rho t_{k}^{p - 1} ,\; (\rho>0)\; \quad \text{and} \quad t_{k}^{p} - t_{k - 1}^{p} = t_{k}^{p - 1},\; (t_{-1} \equiv 0 \;\text{and}\; t_0 = 1),
	\end{equation*}	
	where $p = 1$ (if $\sigma = 0$) or $p = 2$ (if $\sigma > 0$), and $\tau_{k} = 1$ if $\sigma = 0$, or $\tau_{k} = t_{k}$ if $\sigma > 0$. We will also often use the following expression. For any $k\geq 0$ and any $\xi \in {\cal F}$ we define,
	\begin{equation}\label{laggap}
		s_{k} := \Lag_{\rho t_{k - 1}^{p}}\left(x^{k} , \eta\right) - \Lag_{\rho t_{k - 1}^{p}}\left(\xi , \eta\right) \equiv \Lag_{\rho t_{k - 1}^{p}}\left(x^{k} , \eta\right) - \Psi(\xi).
	 \end{equation}
\smallskip

	\noindent We start with the first useful technical lemma.
	\begin{lemma} \label{L:FGLIter}
		Let $\left\{ \left(x^{k} , z^{k} , y^{k}\right) \right\}_{k \in \nn}$ be a sequence generated by FLAG. Then, for any $\xi \in \FFF$, $\eta \in \real^{m}$ and $k \geq 0$, we have
		\begin{align}
			\Lag_{\rho_{k}}\left(z^{k + 1} , \eta\right) - \Lag_{\rho_{k}}\left(\xi , \eta\right) & \leq \tau_{k}\Delta_{P}\left(\xi , z^{k} , z^{k + 1}\right) - \frac{\sigma}{2}\norm{\xi - z^{k + 1}}^{2} + \frac{1}{\mu\rho_{k}}\Delta\left(\eta , y^{k} , y^{k + 1}\right) \nonumber \\
			& - \rho t_{k - 1}^{p}\act{\AAA x^{k} - b , \AAA z^{k + 1} - b}. \label{L:FGLIter:0}
		\end{align}		
	\end{lemma}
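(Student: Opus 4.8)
The plan is to start from the defining inequality of a \nice primal algorithmic map as already specialized to the FLAG iterates, namely \eqref{AlgoMapNiceProperty}, and to massage it into the claimed form through three reductions: replacing the dual argument $\lambda^{k}$ by an arbitrary $\eta$, identifying the cross term via the $t_{k}$-recursion, and rewriting the feasibility penalty through the multiplier update.

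First I would exploit that the augmented Lagrangian $\Lag_{\rho_{k}}\left(x , \cdot\right)$ is \emph{affine} in its dual argument. Writing $r^{k + 1} := \AAA z^{k + 1} - b$, we have $\Lag_{\rho_{k}}\left(z^{k + 1} , \eta\right) - \Lag_{\rho_{k}}\left(z^{k + 1} , \lambda^{k}\right) = \act{\eta - \lambda^{k} , r^{k + 1}}$, while $\Lag_{\rho_{k}}\left(\xi , \eta\right) - \Lag_{\rho_{k}}\left(\xi , \lambda^{k}\right) = \act{\eta - \lambda^{k} , \AAA\xi - b} = 0$ because $\xi \in \FFF$. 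Adding $\act{\eta - \lambda^{k} , r^{k + 1}}$ to both sides of \eqref{AlgoMapNiceProperty} and discarding the nonpositive term $-\frac{\tau_{k}}{2}\norm{z^{k + 1} - z^{k}}_{Q}^{2}$ (legitimate since $Q \in \bbS_{+}^{n}$) yields an upper bound on $\Lag_{\rho_{k}}\left(z^{k + 1} , \eta\right) - \Lag_{\rho_{k}}\left(\xi , \eta\right)$ whose only nonstandard remaining pieces are $-\frac{\delta\rho_{k}}{2}\norm{r^{k + 1}}^{2}$ and $\act{\eta - \lambda^{k} , r^{k + 1}}$.

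Second I would substitute the definition \eqref{GL:Lambda} of $\lambda^{k}$, splitting $\act{\eta - \lambda^{k} , r^{k + 1}} = \act{\eta - y^{k} , r^{k + 1}} - \rho_{k}\left(t_{k} - 1\right)\act{\AAA x^{k} - b , r^{k + 1}}$. The crux here is the identity $\rho_{k}\left(t_{k} - 1\right) = \rho t_{k - 1}^{p}$, which produces exactly the last term of the claimed inequality. It follows from the FLAG recursion $t_{k}^{p} - t_{k - 1}^{p} = t_{k}^{p - 1}$ together with $\rho_{k} = \rho t_{k}^{p - 1}$: indeed $\rho_{k}\left(t_{k} - 1\right) = \rho\left(t_{k}^{p} - t_{k}^{p - 1}\right) = \rho\left(t_{k}^{p} - \left(t_{k}^{p} - t_{k - 1}^{p}\right)\right) = \rho t_{k - 1}^{p}$, valid uniformly for $p \in \left\{1 , 2\right\}$.

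Finally I would convert the two leftover terms, $-\frac{\delta\rho_{k}}{2}\norm{r^{k + 1}}^{2} + \act{\eta - y^{k} , r^{k + 1}}$, into the multiplier quantity $\frac{1}{\mu\rho_{k}}\Delta\left(\eta , y^{k} , y^{k + 1}\right)$. Expanding $\Delta$ via its definition \eqref{D:Delta} and inserting the dual update \eqref{FGL:MultiStep}, which gives $y^{k + 1} - y^{k} = \mu\rho_{k} r^{k + 1}$, one computes directly that $\frac{1}{\mu\rho_{k}}\Delta\left(\eta , y^{k} , y^{k + 1}\right) = \act{\eta - y^{k} , r^{k + 1}} - \frac{\mu\rho_{k}}{2}\norm{r^{k + 1}}^{2}$. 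The desired bound then reduces to $-\frac{\delta\rho_{k}}{2}\norm{r^{k + 1}}^{2} \leq -\frac{\mu\rho_{k}}{2}\norm{r^{k + 1}}^{2}$, i.e. to $\mu \leq \delta$, which is precisely the initialization constraint $\mu \in \left(0 , \delta\right]$ imposed in FLAG. This last step is where the parameter choice $\mu \leq \delta$ earns its keep; everything else is exact algebra, so I expect the only genuine subtlety to be keeping the $t_{k}$-recursion bookkeeping consistent across the convex ($p = 1$) and strongly convex ($p = 2$) cases in the cross-term identity.
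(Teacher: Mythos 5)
Your proposal is correct and follows essentially the same route as the paper's proof: both pass from the nice-map inequality \eqref{AlgoMapNiceProperty} to the dual argument $\eta$ (the paper does this by adding the inner-product identity \eqref{L:FGLIter:3}, you by invoking affinity of $\Lag_{\rho_{k}}\left(x , \cdot\right)$ directly, which is the same computation), use the identity $\rho_{k}\left(t_{k} - 1\right) = \rho t_{k - 1}^{p}$ from the $t_{k}$-recursion, rewrite $\act{\eta - y^{k} , \AAA z^{k + 1} - b}$ via the update \eqref{FGL:MultiStep} and the three-points identity, and close with $\mu \leq \delta$. No gaps; your explicit isolation of where $\mu \leq \delta$ is used matches the paper's final step exactly.
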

	\begin{proof}
		Since $\left\{ \left(x^{k} , z^{k} , y^{k}\right) \right\}_{k \in \nn}$ is generated by FLAG, we obtain from \eqref{AlgoMapNiceProperty} after omitting the non-negative term $\left(\tau_{k}/2\right)\norm{z^{k + 1} - z^{k}}_{Q}^{2}$, that
		\begin{equation} \label{L:FGLIter:1}
			\Lag_{\rho_{k}}\left(z^{k + 1} , \lambda^{k}\right) - \Lag_{\rho_{k}}\left(\xi , \lambda^{k}\right) \leq \tau_{k}\Delta_{P}\left(\xi , z^{k} , z^{k + 1}\right) - \frac{\sigma}{2}\norm{\xi - z^{k + 1}}^{2} - \frac{\delta\rho_{k}}{2}\norm{\AAA z^{k + 1} - b}^{2}.
		\end{equation}
		From the multiplier update \eqref{FGL:MultiStep} and the three-points identity \eqref{Pyth}, we obtain, for all $\eta \in \real^{m}$,
		\begin{align}
			\act{\eta - y^{k} , \AAA z^{k + 1} - b}  = \frac{1}{\mu\rho_{k}}\act{\eta - y^{k} , y^{k + 1} - y^{k}} & = \frac{1}{\mu\rho_{k}}\Delta\left(\eta , y^{k} , y^{k + 1}\right) + \frac{1}{2\mu\rho_{k}}\norm{y^{k + 1} - y^{k}}^{2} \nonumber \\
			& = \frac{1}{\mu\rho_{k}}\Delta\left(\eta , y^{k} , y^{k + 1}\right) + \frac{\mu\rho_{k}}{2}\norm{\AAA z^{k + 1} - b}^{2}. \label{L:FGLIter:2}
		\end{align}
		Using the update rule of the sequence $\seq{t}{k}$ (\cf \eqref{FGL:Tk}) we have that $\rho_{k}\left(t_{k} - 1\right) =\rho t_{k - 1}^{p} $ and thus
		\begin{equation*}
			\lambda^{k} = y^{k} + \rho_{k}\left(t_{k} - 1\right)\left(\AAA x^{k} -b\right) = y^{k} + \rho t_{k - 1}^{p}\left(\AAA x^{k} -b\right).
		\end{equation*}
		Using this together with \eqref{L:FGLIter:2} yields, for all $\eta \in \real^{m}$, that
		\begin{align}		
			\act{\eta - \lambda^{k} , \AAA z^{k + 1} - b} & = \act{\eta - y^{k} , \AAA z^{k + 1} - b} - \rho t_{k - 1}^{p}\act{\AAA x^{k} - b , \AAA z^{k + 1} - b} \nonumber \\
			& = \frac{1}{\mu\rho_{k}}\Delta\left(\eta , y^{k} , y^{k + 1}\right) + \frac{\mu\rho_{k}}{2}\norm{\AAA z^{k + 1} - b}^{2} - \rho t_{k - 1}^{p}\act{\AAA x^{k} - b , \AAA z^{k + 1} - b}. \label{L:FGLIter:3}
		\end{align}
		Adding \eqref{L:FGLIter:1} to \eqref{L:FGLIter:3} yields (recall that $\mu \leq \delta$) the desired result.
	\end{proof}	
	From this result, we will get the (classical and fast) ergodic rate of convergence results to be presented in Section \ref{SSec:Ergodic} (see Corollaries \ref{C:FGLrate} and \ref{C:GLrate}). However, in order to obtain the {\em non-ergodic} results in Section \ref{SSec:NonErgodic}, the following result is the main step towards the derivation of the promised (classical and fast) rate of convergence results.
	\begin{lemma} \label{L:FGLMain}
		Let $\left\{ \left(x^{k} , z^{k} , y^{k}\right) \right\}_{k \in \nn}$ be a sequence generated by FLAG. Then, for any $\xi \in \FFF$, $\eta \in \real^{m}$ and $k \geq 0$, we have
		\begin{equation*}
			t_{k}^{p}s_{k + 1} - t_{k - 1}^{p}s_{k} \leq \frac{\tau_{k}\rho_{k}}{\rho}\Delta_{P}\left(\xi , z^{k} , z^{k + 1}\right) - \frac{\rho_{k}\sigma}{2\rho}\norm{\xi - z^{k + 1}}^{2} + \frac{1}{\mu\rho}\Delta\left(\eta , y^{k} , y^{k + 1}\right),
		\end{equation*}
		where $s_{k} = \Lag_{\rho t_{k - 1}^{p}}\left(x^{k} , \eta\right) - \Lag_{\rho t_{k - 1}^{p}}\left(\xi , \eta\right)$ (\cf \ref{laggap}).
	\end{lemma}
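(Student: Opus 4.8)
The plan is to reduce the claim to the per-step estimate of Lemma \ref{L:FGLIter} by a single rescaling, and then to verify that the weighted difference $t_k^p s_{k+1} - t_{k-1}^p s_k$ is controlled by the rescaled Lagrangian gap at $z^{k+1}$ up to exactly one leftover cross term in the feasibility residuals. Concretely, since $\xi \in \FFF$ gives $\Lag_{\rho_k}(\xi,\eta) = \Psi(\xi)$, and since $\rho_k = \rho t_k^{p-1}$ so that $t_k^{p-1} = \rho_k/\rho > 0$, I would first multiply inequality \eqref{L:FGLIter:0} through by $t_k^{p-1}$. This turns its three ``good'' terms exactly into the three terms on the right-hand side of the asserted inequality (using $t_k^{p-1}\tau_k = \tau_k\rho_k/\rho$, $t_k^{p-1}\sigma/2 = \rho_k\sigma/(2\rho)$ and $t_k^{p-1}/(\mu\rho_k) = 1/(\mu\rho)$), and leaves a single surviving summand $-\rho t_k^{p-1} t_{k-1}^p \act{\AAA x^k - b, \AAA z^{k+1} - b}$. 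Hence it suffices to prove the key inequality
\[
 t_k^p s_{k+1} - t_{k-1}^p s_k \;\leq\; t_k^{p-1}\bigl(\Lag_{\rho_k}(z^{k+1},\eta) - \Psi(\xi)\bigr) + \rho\, t_k^{p-1} t_{k-1}^p \act{\AAA x^k - b, \AAA z^{k+1} - b},
\]
after which substituting the rescaled Lemma \ref{L:FGLIter} makes the two occurrences of the cross term cancel and yields the assertion.

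The two identities driving everything are the recursion \eqref{FGL:Tk}, rewritten as $t_k^p(1 - t_k^{-1}) = t_k^p - t_k^{p-1} = t_{k-1}^p$, and the image under $\AAA$ of the acceleration step \eqref{FGL:AccStep}, namely $t_k^p(\AAA x^{k+1} - b) = t_{k-1}^p(\AAA x^k - b) + t_k^{p-1}(\AAA z^{k+1} - b)$. To establish the key inequality I would split each $s_j$ according to \eqref{D:AugLA} and \eqref{laggap} into its objective part $\Psi(\cdot) - \Psi(\xi)$, its linear multiplier part $\act{\eta, \AAA\cdot - b}$, and its quadratic feasibility part, and treat the three pieces separately. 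For the objective part, convexity of $\Psi$ applied to the convex combination $x^{k+1} = (1-t_k^{-1})x^k + t_k^{-1}z^{k+1}$ gives, after multiplying by $t_k^p$ and using the recursion, $t_k^p\Psi(x^{k+1}) \leq t_{k-1}^p\Psi(x^k) + t_k^{p-1}\Psi(z^{k+1})$; the same recursion makes the coefficient of $\Psi(\xi)$, namely $-t_k^p + t_{k-1}^p + t_k^{p-1}$, vanish, so this piece gives the required inequality (with nonpositive slack). For the linear part, the $\AAA$-image identity yields the exact equality $t_k^p\act{\eta,\AAA x^{k+1}-b} - t_{k-1}^p\act{\eta,\AAA x^k - b} = t_k^{p-1}\act{\eta,\AAA z^{k+1}-b}$.

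The delicate bookkeeping, and the step I expect to require the most care, is the quadratic feasibility part, where $s_{k+1}$ carries weight $\rho t_k^p$ and $s_k$ weight $\rho t_{k-1}^p$. Squaring the $\AAA$-image identity gives $t_k^{2p}\norm{\AAA x^{k+1}-b}^2 = t_{k-1}^{2p}\norm{\AAA x^k - b}^2 + 2 t_{k-1}^p t_k^{p-1}\act{\AAA x^k - b, \AAA z^{k+1}-b} + t_k^{2p-2}\norm{\AAA z^{k+1}-b}^2$, and multiplying by $\rho/2$ shows that the quadratic contribution to $t_k^p s_{k+1} - t_{k-1}^p s_k$ equals exactly $\tfrac{1}{2}\rho\, t_k^{2p-2}\norm{\AAA z^{k+1}-b}^2 + \rho\, t_{k-1}^p t_k^{p-1}\act{\AAA x^k-b,\AAA z^{k+1}-b}$. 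Here the first summand is precisely the feasibility term carried inside $t_k^{p-1}\Lag_{\rho_k}(z^{k+1},\eta)$ (since $t_k^{p-1}\rho_k/2 = \rho t_k^{2p-2}/2$), and the second is precisely the leftover cross term, so this piece holds with equality. Assembling the three pieces proves the key inequality, and the final combination with the rescaled Lemma \ref{L:FGLIter} completes the proof. The only points genuinely needing verification are the exact matching of the cross term and of the $\norm{\AAA z^{k+1}-b}^2$ coefficient, both of which hinge on $\rho_k = \rho t_k^{p-1}$ together with the recursion $t_k^p - t_{k-1}^p = t_k^{p-1}$.
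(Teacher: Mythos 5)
Your proposal is correct and follows essentially the same route as the paper's proof: the paper likewise combines the convexity of $\Psi$ along the acceleration step \eqref{FGL:AccStep} with the recursion $t_k^p - t_{k-1}^p = t_k^{p-1}$ to get the Lagrangian telescoping inequality, establishes your quadratic identity (its equation \eqref{L:FGLMain:2}, obtained by expanding $\norm{\AAA x^{k+1}-b}^2$ rather than by squaring the $\AAA$-image identity, which is algebraically the same), and then cancels the cross term $\rho_k t_{k-1}^p \act{\AAA x^k - b , \AAA z^{k+1} - b}$ against the rescaled Lemma \ref{L:FGLIter} exactly as you do. The only difference is presentational: you isolate a ``key inequality'' first and verify it piecewise, while the paper derives the pieces and then combines them.
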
	
	\begin{proof}
		Since $x^{k + 1} = \left(1 - t_{k}^{-1}\right)x^{k} + t_{k}^{-1}z^{k + 1}$ (\cf \eqref{FGL:AccStep}) we obtain from the convexity of $\Psi\left(\cdot\right)$ that
		\begin{equation*}
			\Psi\left(x^{k + 1}\right) \leq \left(1 - t_{k}^{-1}\right)\Psi\left(x^{k}\right) + t_{k}^{-1}\Psi\left(z^{k + 1}\right).
		\end{equation*}
		Therefore, multiplying both sides by $t_{k}^{p}$ (recalling that $t_{k}^{p} - t_{k}^{p - 1} = t_{k - 1}^{p}$), we obtain
		\begin{equation*}
			t_{k}^{p}\left(\Psi\left(x^{k + 1}\right) - \Psi\left(\xi\right)\right) - t_{k - 1}^{p}\left(\Psi\left(x^{k}\right) - \Psi\left(\xi\right)\right) \leq t_{k}^{p - 1}\left(\Psi\left(z^{k + 1}\right) - \Psi\left(\xi\right)\right).
		\end{equation*}
		In addition, using again \eqref{FGL:AccStep}, yields that
		\begin{equation*}
			t_{k}^{p}\act{\eta , \AAA x^{k + 1} - b} - t_{k - 1}^{p}\act{\eta , \AAA x^{k} - b} = t_{k}^{p - 1}\act{\eta , \AAA z^{k + 1} - b}.
		\end{equation*}			
		Combining these two facts yields (using the definition of the Lagrangian $\Lag$)
		\begin{equation} \label{L:FGLMain:1}
			t_{k}^{p}\left(\Lag\left(x^{k + 1} , \eta\right) - \Lag\left(\xi , \eta\right)\right) - t_{k - 1}^{p}\left(\Lag\left(x^{k} , \eta\right) - \Lag\left(\xi , \eta\right)\right) \leq t_{k}^{p - 1}\left(\Lag\left(z^{k + 1} , \eta\right) - \Lag\left(\xi , \eta\right)\right).
		\end{equation}			
		Now, we will again use the relation $x^{k + 1} = \left(1 - t_{k}^{-1}\right)x^{k} + t_{k}^{-1}z^{k + 1}$ to obtain
		\begin{equation*}
			\norm{\AAA x^{k + 1} - b}^{2}  = \left(1 - t_{k}^{-1}\right)^{2}\norm{\AAA x^{k} - b}^{2} + t_{k}^{-2}\norm{\AAA z^{k + 1} - b}^{2} + 2\left(1 - t_{k}^{-1}\right)t_{k}^{-1}\act{\AAA x^{k} - b , \AAA z^{k + 1} - b}.
		\end{equation*}		
		Multiplying both sides of the above equality by $\rho t_{k}^{2p}/2$ (recalling $\rho_{k} = \rho t_{k}^{p - 1}$ and $t_{k}^{p} - t_{k}^{p - 1} = t_{k - 1}^{p}$) yields
		\begin{align}
			\frac{\rho t_{k}^{2p}}{2}\norm{\AAA x^{k + 1} - b}^{2} - \frac{\rho t_{k - 1}^{2p}}{2}\norm{\AAA x^{k} - b}^{2} = \frac{\rho_{k}t_{k}^{p - 1}}{2}\norm{\AAA z^{k + 1} - b}^{2} + \rho_{k}t_{k - 1}^{p}\act{\AAA x^{k} - b , \AAA z^{k + 1} - b}. \label{L:FGLMain:2}
		\end{align}
		Therefore, adding \eqref{L:FGLMain:1} to \eqref{L:FGLMain:2} yields (recall that $\norm{\AAA\xi - b}^{2} = 0$) with $s_{k} = \Lag_{\rho t_{k - 1}^{p}}\left(x^{k} , \eta\right) - \Lag_{\rho t_{k - 1}^{p}}\left(\xi , \eta\right)$,
		\begin{equation*}
			t_{k}^{p}s_{k + 1} - t_{k - 1}^{p}s_{k} \leq t_{k}^{p - 1}\left(\Lag_{\rho_{k}}\left(z^{k + 1} , \eta\right) - \Lag_{\rho_{k}}\left(\xi , \eta\right)\right) + \rho_{k}t_{k - 1}^{p}\act{\AAA x^{k} - b , \AAA z^{k + 1} - b}.
		\end{equation*}			
		From Lemma \ref{L:FGLIter}, after we multiplied both sides of \eqref{L:FGLIter:0} by $t_{k}^{p - 1}$ (recall that $\rho_{k} = \rho t_{k}^{p - 1}$), we obtain that
		\begin{align*}
			t_{k}^{p - 1}\left(\Lag_{\rho_{k}}\left(z^{k + 1} , \eta\right) - \Lag_{\rho_{k}}\left(\xi , \eta\right)\right) + \rho_{k}t_{k - 1}^{p}\act{\AAA x^{k} - b , \AAA z^{k + 1} - b} & \leq \frac{\tau_{k}\rho_{k}}{\rho}\Delta_{P}\left(\xi , z^{k} , z^{k + 1}\right) - \frac{\rho_{k}\sigma}{2\rho}\norm{\xi - z^{k + 1}}^{2} \\
			& + \frac{1}{\mu\rho}\Delta\left(\eta , y^{k} , y^{k + 1}\right).
		\end{align*}		
		By combining the last two inequalities, we thus get
		\begin{equation*}
			t_{k}^{p}s_{k + 1} - t_{k - 1}^{p}s_{k} \leq \frac{\tau_{k}\rho_{k}}{\rho}\Delta_{P}\left(\xi , z^{k} , z^{k + 1}\right) - \frac{\rho_{k}\sigma}{2\rho}\norm{\xi - z^{k + 1}}^{2} + \frac{1}{\mu\rho}\Delta\left(\eta , y^{k} , y^{k + 1}\right),
		\end{equation*}				
		which proves the desired result.
	\end{proof}		
	We conclude this section with the following simple result which summarizes some useful properties of the sequence $\seq{t}{k}$ (\cf \eqref{FGL:Tk}) that will be used below.
	\begin{lemma} \label{L:Tk}
		Let $\seq{t}{k}$ be the sequence of positive numbers generated from $t_{0} = 1$ via the equation $t_{k + 1}^{p} - t_{k}^{p} = t_{k + 1}^{p - 1}$, with $p = 1$ or $p = 2$. Then, for any $k \geq 0$, we have
	\begin{itemize}
		\item[$\rm{(i)}$] For $p = 1$, we have $t_{k} = k + 1$.
		\item[$\rm{(ii)}$] For $p = 2$, we have $t_{k} \geq \left(k + 1\right)/2$. Moreover, one has $t_{k + 1}^{2} \leq t_{k}^{2} + 2t_{k}$.
	\end{itemize}				
	\end{lemma}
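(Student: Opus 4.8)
The plan is to prove both parts by induction on $k$, working directly from the defining relation $t_{k+1}^{p} - t_{k}^{p} = t_{k+1}^{p-1}$ together with the initialization $t_{0} = 1$. Part (i) is immediate: for $p = 1$ the relation reduces to $t_{k+1} - t_{k} = 1$, so that $t_{k+1} = t_{k} + 1$, and since $t_{0} = 1$ a trivial induction yields $t_{k} = k + 1$ for all $k \geq 0$.

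Part (ii) requires slightly more care. For $p = 2$ the relation becomes $t_{k+1}^{2} = t_{k}^{2} + t_{k+1}$. The first step is to record that the sequence is strictly increasing: since $t_{k+1} > 0$, we have $t_{k+1}^{2} = t_{k}^{2} + t_{k+1} > t_{k}^{2}$, hence $t_{k+1} > t_{k}$, and in particular $t_{k} \geq t_{0} = 1$ for every $k$. The key device for the lower bound is to factor the relation as $(t_{k+1} - t_{k})(t_{k+1} + t_{k}) = t_{k+1}$, which gives $t_{k+1} - t_{k} = t_{k+1}/(t_{k+1} + t_{k}) > 1/2$, the inequality following from $t_{k} < t_{k+1}$. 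Starting from $t_{0} = 1 \geq 1/2$ and adding strictly more than $1/2$ at each step, the induction then delivers $t_{k} > t_{k-1} + 1/2 \geq (k+1)/2$, establishing $t_{k} \geq (k+1)/2$.

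For the ``Moreover'' claim, I would note that, since $t_{k+1}^{2} = t_{k}^{2} + t_{k+1}$, the desired inequality $t_{k+1}^{2} \leq t_{k}^{2} + 2 t_{k}$ is equivalent to the single scalar estimate $t_{k+1} \leq 2 t_{k}$. Using the closed form $t_{k+1} = (1 + \sqrt{1 + 4 t_{k}^{2}})/2$ from \eqref{FGL:Tk}, this reduces to $\sqrt{1 + 4 t_{k}^{2}} \leq 4 t_{k} - 1$; as $t_{k} \geq 1$ the right-hand side is positive, and squaring turns the claim into $0 \leq 12 t_{k}^{2} - 8 t_{k} = 4 t_{k}(3 t_{k} - 2)$, which holds because $t_{k} \geq 1$. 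There is essentially no hard part here, the whole statement being elementary bookkeeping; the only points deserving attention are the factorization $(t_{k+1} - t_{k})(t_{k+1} + t_{k}) = t_{k+1}$, which cleanly produces the uniform gap $t_{k+1} - t_{k} > 1/2$ that drives the lower bound, and the reduction of the ``Moreover'' inequality to the scalar bound $t_{k+1} \leq 2 t_{k}$.
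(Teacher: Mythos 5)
Your proof is correct, but it is assembled differently from the paper's. For the lower bound $t_{k}\geq (k+1)/2$ the paper simply cites the classical FISTA argument (\cite[Lemma 4.3]{BT2009}, which bounds the explicit update $t_{k+1}=(1+\sqrt{1+4t_{k}^{2}})/2\geq t_{k}+1/2$), whereas you derive it self-contained from the factorization $(t_{k+1}-t_{k})(t_{k+1}+t_{k})=t_{k+1}$, getting the uniform gap $t_{k+1}-t_{k}=t_{k+1}/(t_{k+1}+t_{k})>1/2$ via monotonicity; this makes your write-up independent of the closed-form solution at this stage, which is a small gain in generality. For the ``Moreover'' claim the roles reverse: the paper stays with the identity $t_{k+1}=t_{k}+t_{k+1}/(t_{k}+t_{k+1})$ and concludes directly that $t_{k+1}\leq t_{k}+1\leq 2t_{k}$ (since the fraction is less than $1$ and $t_{k}\geq 1$), while you invoke the closed form from \eqref{FGL:Tk} and verify $\sqrt{1+4t_{k}^{2}}\leq 4t_{k}-1$ by squaring. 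Your computation is sound, but note that you already had the paper's shorter argument in hand: the same factorization you used for the lower bound gives $t_{k+1}-t_{k}<1\leq t_{k}$ immediately, so the detour through the quadratic formula is unnecessary. Both routes are elementary and valid; the paper's is slightly leaner on the second part, yours is more self-contained on the first.
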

	\begin{proof}
		The first item is immediate as well as the first part of the second item (see, for instance, \cite[Lemma 4.3]{BT2009}). Moreover, note that by the given recursion $t_{k + 1}^{2} - t_{k}^{2} = t_{k + 1}$, one also has $t_{k + 1} = t_{k} + t_{k + 1}/\left(t_{k} + t_{k + 1}\right)$, and hence it follows that $t_{k + 1}^{2} = t_{k}^{2} + t_{k + 1} \leq t_{k}^{2} + 2t_{k}$, since $t_{k} \geq 1$.
	\end{proof}
	In the rest of this section we aim at proving rate of convergence results of FLAG. We will focus, in this paper, on iteration complexity of Lagrangian-based methods, using the following two classical measures at a given iterate $k \in \nn$:
	\begin{itemize}
		\item[$\rm{(i)}$] \textit{Function values gap} in terms of $\Psi\left(x^{k}\right) - \Psi\left(x^{\ast}\right)$.
		\item[$\rm{(ii)}$] \textit{Feasibility violation} of the constraints of problem (P) in terms of $\norm{\AAA x^{k} - b}$.
	\end{itemize}		
	 When discussing the measures mentioned above for the iterates produced by FLAG, we will also distinguish our results between rates expressed in terms of the original produced sequence or of the ergodic sequence, as defined below in subsection \ref{SSec:Ergodic}.

\subsection{Non-Ergodic Rate of Convergence} \label{SSec:NonErgodic}
	Throughout the rest of this section, recalling the fact that the set of optimal dual solutions is compact (see Section \ref{Sec:Form}), we take $c > 0$ to be a constant for which $c \geq 2\norm{y^{\ast}}$, where $y^{\ast}$ is an optimal solution of the dual problem.
\medskip

	We are now ready to prove our main result in the strongly convex setting (\ie $p = 2$) on the sequence itself.
	\begin{theorem}[A fast non-ergodic function values and feasibility violation rates] \label{T:FGLrate}
		Let $\left\{ \left(x^{k} , z^{k} , y^{k}\right) \right\}_{k \in \nn}$ be a sequence generated by FLAG. Suppose that $\sigma > 0$ and $0 \preceq P \preceq \left(\sigma/2\right)I_{n}$. Then, for any optimal solution $x^{\ast}$ of problem (P), we have
   		\begin{align}
			\Psi\left(x^{N}\right) - \Psi\left(x^{\ast}\right) & \leq \frac{B_{\rho , c}\left(x^{\ast}\right)}{2N^{2}}, \label{T:FGLrate:1} \\
       		\norm{\AAA x^{N} - b} & \leq \frac{B_{\rho , c}\left(x^{\ast}\right)}{cN^{2}}, \label{T:FGLrate:2}
		\end{align}
		where $B_{\rho , c}\left(x^{\ast}\right) := 4\left(\norm{x^{\ast} - z^{0}}_{P}^{2} + \frac{1}{\mu\rho}\left(\norm{y^{0}} + c\right)^{2}\right)$.
	\end{theorem}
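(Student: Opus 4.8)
The plan is to start from the master recursion of Lemma \ref{L:FGLMain} and convert it into a telescoping estimate. We are in the strongly convex regime, so $p = 2$, $\tau_{k} = t_{k}$ and $\rho_{k} = \rho t_{k}$; hence the coefficients in Lemma \ref{L:FGLMain} collapse to $\tau_{k}\rho_{k}/\rho = t_{k}^{2}$ and $\rho_{k}\sigma/(2\rho) = t_{k}\sigma/2$, and, after fixing $\xi = x^\ast \in \FFF$, the recursion reads
\begin{equation*}
	t_{k}^{2}s_{k + 1} - t_{k - 1}^{2}s_{k} \leq t_{k}^{2}\Delta_{P}\left(x^\ast , z^{k} , z^{k + 1}\right) - \frac{t_{k}\sigma}{2}\norm{x^\ast - z^{k + 1}}^{2} + \frac{1}{\mu\rho}\Delta\left(\eta , y^{k} , y^{k + 1}\right),
\end{equation*}
which I intend to sum over $k = 0 , 1 , \dots , N - 1$.

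The crux is making the primal part telescope, and this is where I expect the main obstacle to sit. Expanding $\Delta_{P}\left(x^\ast , z^{k} , z^{k + 1}\right) = \tfrac{1}{2}\left(\norm{x^\ast - z^{k}}_{P}^{2} - \norm{x^\ast - z^{k + 1}}_{P}^{2}\right)$ and invoking the hypothesis $0 \preceq P \preceq (\sigma/2)I_{n}$ — which yields $\norm{x^\ast - z^{k + 1}}_{P}^{2} \leq (\sigma/2)\norm{x^\ast - z^{k + 1}}^{2}$ — the strongly convex term can be absorbed to produce the upper bound $\tfrac{t_{k}^{2}}{2}\norm{x^\ast - z^{k}}_{P}^{2} - \tfrac{t_{k}^{2} + 2t_{k}}{2}\norm{x^\ast - z^{k + 1}}_{P}^{2}$. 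The second ingredient now enters: by Lemma \ref{L:Tk}(ii) one has $t_{k + 1}^{2} \leq t_{k}^{2} + 2t_{k}$, so the last display is in turn bounded by $\tfrac{t_{k}^{2}}{2}\norm{x^\ast - z^{k}}_{P}^{2} - \tfrac{t_{k + 1}^{2}}{2}\norm{x^\ast - z^{k + 1}}_{P}^{2}$, a genuine telescoping difference; neither the spectral condition on $P$ nor the $t_{k}$-recursion alone suffices here. The dual part telescopes directly. Summing, using $t_{-1} = 0$ and $t_{0} = 1$ and discarding the nonpositive end terms $-\tfrac{t_{N}^{2}}{2}\norm{x^\ast - z^{N}}_{P}^{2}$ and $-\tfrac{1}{2\mu\rho}\norm{\eta - y^{N}}^{2}$, I obtain the master inequality
\begin{equation*}
	t_{N - 1}^{2}s_{N} \leq \frac{1}{2}\norm{x^\ast - z^{0}}_{P}^{2} + \frac{1}{2\mu\rho}\norm{\eta - y^{0}}^{2}, \qquad \forall\, \eta \in \real^{m},
\end{equation*}
where $s_{N} = \Psi\left(x^{N}\right) - \Psi\left(x^\ast\right) + \act{\eta , \AAA x^{N} - b} + \tfrac{\rho t_{N - 1}^{2}}{2}\norm{\AAA x^{N} - b}^{2}$.

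Both rates then follow from two instantiations of the free multiplier $\eta$. For the function-value bound \eqref{T:FGLrate:1} I take $\eta = \bo$; since the quadratic feasibility term in $s_{N}$ is nonnegative it may be dropped, leaving $t_{N - 1}^{2}\left(\Psi\left(x^{N}\right) - \Psi\left(x^\ast\right)\right) \leq \tfrac{1}{2}\norm{x^\ast - z^{0}}_{P}^{2} + \tfrac{1}{2\mu\rho}\norm{y^{0}}^{2} \leq \tfrac{1}{8}B_{\rho , c}\left(x^\ast\right)$. For the feasibility bound \eqref{T:FGLrate:2}, writing $r_{N} := \AAA x^{N} - b$ (the case $r_{N} = \bo$ being trivial), I take $\eta = c\, r_{N}/\norm{r_{N}}$, so that $\act{\eta , r_{N}} = c\norm{r_{N}}$ while $\norm{\eta - y^{0}} \leq c + \norm{y^{0}}$ by the triangle inequality. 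The saddle-point inequality of Assumption \ref{A:ConvLag}, $\Lag\left(x^\ast , y^\ast\right) \leq \Lag\left(x^{N} , y^\ast\right)$, supplies the lower bound $\Psi\left(x^{N}\right) - \Psi\left(x^\ast\right) \geq -\act{y^\ast , r_{N}} \geq -\tfrac{c}{2}\norm{r_{N}}$, via Cauchy--Schwarz and $c \geq 2\norm{y^\ast}$. Substituting these into the master inequality and discarding the nonnegative quadratic term leaves $\tfrac{c}{2}t_{N - 1}^{2}\norm{r_{N}} \leq \tfrac{1}{8}B_{\rho , c}\left(x^\ast\right)$. This particular choice of $\eta$ — rather than completing the square against the quadratic penalty, which would only deliver an $O(\sqrt{B})$ constant — is what keeps the feasibility constant linear in the initialization data and matches $B_{\rho , c}\left(x^\ast\right)$ exactly. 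Finally, both estimates are converted to the advertised $O(1/N^{2})$ form through $t_{N - 1}^{2} \geq N^{2}/4$ (Lemma \ref{L:Tk}(ii)), giving $\Psi(x^{N}) - \Psi(x^\ast) \leq B_{\rho , c}(x^\ast)/(2N^{2})$ and $\norm{\AAA x^{N} - b} \leq B_{\rho , c}(x^\ast)/(cN^{2})$.
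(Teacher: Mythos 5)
Your proposal is correct and follows essentially the same route as the paper: both start from Lemma \ref{L:FGLMain} with $\xi = x^{\ast}$, absorb the strong-convexity term via $P \preceq (\sigma/2)I_{n}$ together with $t_{k+1}^{2} \leq t_{k}^{2} + 2t_{k}$ from Lemma \ref{L:Tk}(ii) to telescope, sum with $t_{-1} = 0$, and finish with the saddle-point bound $c \geq 2\norm{y^{\ast}}$ and $t_{N-1}^{2} \geq N^{2}/4$. The only (immaterial) difference is at the end: the paper maximizes the summed inequality over the ball $\norm{\eta} \leq c$ to get the single combined estimate $\Psi(x^{N}) - \Psi(x^{\ast}) + c\norm{\AAA x^{N} - b} \leq B_{\rho,c}(x^{\ast})/(2N^{2})$, whereas you instantiate $\eta$ twice ($\eta = \bo$ and $\eta = c\,r_{N}/\norm{r_{N}}$), which yields the same constants.
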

	\begin{proof}
		From Lemma \ref{L:FGLMain}, after we substitute $\xi = x^{\ast}$ and $p = 2$ (recall that $\rho_{k} = \rho t_{k}$ and $\tau_{k} = t_{k}$), we obtain
		\begin{equation*}
			t_{k}^{2}s_{k + 1} - t_{k - 1}^{2}s_{k} \leq t_{k}^{2}\Delta_{P}\left(x^{\ast} , z^{k} , z^{k + 1}\right) - \frac{t_{k}\sigma}{2}\norm{x^{\ast} - z^{k + 1}}^{2} + \frac{1}{\mu\rho}\Delta\left(\eta , y^{k} , y^{k + 1}\right).
		\end{equation*}
		Using the definition of $\Delta_{P}$ and $\Delta$ (see \eqref{D:Delta}) and the fact that $P \preceq \left(\sigma/2\right)I_{n}$ we have that
		\begin{align*}
			t_{k}^{2}s_{k + 1} - t_{k - 1}^{2}s_{k} & \leq \frac{t_{k}^{2}}{2}\left(\norm{x^{\ast} - z^{k}}_{P}^{2} - \norm{x^{\ast} - z^{k + 1}}_{P}^{2}\right) - t_{k}\norm{x^{\ast} - z^{k + 1}}_{P}^{2} + \frac{1}{2\mu\rho}\left(\norm{\eta - y^{k}}^{2} -  \norm{\eta - y^{k + 1}}^{2}\right) \nonumber \\
			& \leq \frac{1}{2}\left(t_{k}^{2}\norm{x^{\ast} - z^{k}}_{P}^{2} - t_{k + 1}^{2}\norm{x^{\ast} - z^{k + 1}}_{P}^{2}\right) + \frac{1}{2\mu\rho}\left(\norm{\eta - y^{k}}^{2} - \norm{\eta - y^{k + 1}}^{2}\right),
		\end{align*}
		where the last inequality follows from Lemma \ref{L:Tk}(ii). Summing this inequality for all $k = 0 , 1 , \ldots , N - 1$ (recall that $t_{-1} = 0$ and $t_{0} = 1$), it follows (where we use  the definition of $s_{N}$, see \eqref{laggap}, for the first inequality below)
		\begin{equation*}
	t_{N - 1}^{2}\left(\Psi\left(x^{N}\right) - \Psi\left(x^{\ast}\right) + \act{\eta , \AAA x^{N} - b}\right) \leq	t_{N - 1}^{2}s_{N} - t_{-1}^{2}s_{0} \leq \frac{1}{2}\norm{x^{\ast} - z^{0}}_{P}^{2} + \frac{1}{2\mu\rho}\norm{\eta - y^{0}}^{2}.
		\end{equation*}
		Therefore, by taking the maximum of both sides over $\norm{\eta} \leq c$, proves that (recall that $t_{N - 1}^{2} \geq N^{2}/4$ using Lemma \ref{L:Tk}(ii))
		\begin{equation*}
			\Psi\left(x^{N}\right) - \Psi\left(x^{\ast}\right) + c\norm{\AAA x^{N} - b} \leq \frac{B_{\rho , c}\left(x^{\ast}\right)}{2N^{2}} := \beta,
		\end{equation*}
		from which the estimate \eqref{T:FGLrate:1} immediately follows. Moreover, since $\left(x^{\ast} , y^{\ast}\right)$ is a saddle point of problem (P), with $c \geq 2\norm{y^{\ast}}$, using the above inequality it follows that
		\begin{equation*}
			c\norm{\AAA x^{N} - b} \leq \Psi\left(x^{\ast}\right) - \Psi\left(x^{N}\right) + \beta \leq \act{y^{\ast} , \AAA x^{N} - b} + \beta \leq \frac{c}{2}\norm{\AAA x^{N} - b} + \beta,
		\end{equation*}
		and the desired estimate \eqref{T:FGLrate:2} is proved.
	\end{proof}		
	Our main result in the convex setting (\ie $\sigma = 0$ and $p = 1$) on the sequence itself is recorded next.
	\begin{theorem}[A non-ergodic function values and feasibility violation rates] \label{T:GLrate}
		Let $\left\{ \left(x^{k} , z^{k} , y^{k}\right) \right\}_{k \in \nn}$ be a sequence generated by FLAG and suppose that $\sigma = 0$. Then, for any optimal solution $x^{\ast}$ of problem (P), we have
   		\begin{align}
			\Psi\left(x^{N}\right) - \Psi\left(x^{\ast}\right) & \leq \frac{B_{\rho , c}\left(x^{\ast}\right)}{2N}, \label{T:GLrate:1} \\
       		\norm{\AAA x^{N} - b} & \leq \frac{B_{\rho , c}\left(x^{\ast}\right)}{cN}, \label{T:GLrate:2}
		\end{align}
		where $B_{\rho , c}\left(x^{\ast}\right) := 2\left(\norm{x^{\ast} - z^{0}}_{P}^{2} + \frac{1}{\mu\rho}\left(\norm{y^{0}} + c\right)^{2}\right)$.
	\end{theorem}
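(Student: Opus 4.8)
The plan is to mirror the proof of the strongly convex case (Theorem \ref{T:FGLrate}), while exploiting the fact that the convex regime $\sigma = 0$, $p = 1$ is structurally simpler. First I would invoke Lemma \ref{L:FGLMain} with $\xi = x^{\ast}$ and specialize it to $p = 1$. Since then $\rho_{k} = \rho$, $\tau_{k} = 1$ and $\sigma = 0$, the coefficient $\tau_{k}\rho_{k}/\rho$ collapses to $1$ and the negative term $-(\rho_{k}\sigma/2\rho)\norm{\xi - z^{k+1}}^{2}$ vanishes, leaving the clean recursion
\begin{equation*}
	t_{k}s_{k+1} - t_{k-1}s_{k} \leq \Delta_{P}\left(x^{\ast} , z^{k} , z^{k+1}\right) + \frac{1}{\mu\rho}\Delta\left(\eta , y^{k} , y^{k+1}\right).
\end{equation*}

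Next I would sum this over $k = 0 , 1 , \ldots , N-1$. On the left the weights $t_{k}$ telescope to $t_{N-1}s_{N}$ because $t_{-1} = 0$. On the right, expanding $\Delta_{P}$ and $\Delta$ via their definition \eqref{D:Delta} makes both sums telescope directly; this is the key simplification relative to the $p = 2$ proof, where the extra curvature term forced the use of $t_{k+1}^{2} \leq t_{k}^{2} + 2t_{k}$ from Lemma \ref{L:Tk}(ii). Here no such estimate is needed, since the $\Delta_{P}$ term carries unit weight rather than the growing weight $t_{k}^{2}$. Discarding the surviving non-positive contributions $-\norm{x^{\ast} - z^{N}}_{P}^{2}$ and $-\norm{\eta - y^{N}}^{2}$ yields $t_{N-1}s_{N} \leq \frac{1}{2}\norm{x^{\ast} - z^{0}}_{P}^{2} + \frac{1}{2\mu\rho}\norm{\eta - y^{0}}^{2}$.

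The next step is to lower bound $s_{N}$. Since $x^{\ast} \in \FFF$, the definition \eqref{laggap} gives $s_{N} = \Lag_{\rho t_{N-1}}\left(x^{N} , \eta\right) - \Psi\left(x^{\ast}\right)$, and dropping the non-negative augmentation term in \eqref{D:AugLA} bounds this below by $\Psi\left(x^{N}\right) - \Psi\left(x^{\ast}\right) + \act{\eta , \AAA x^{N} - b}$. Using $t_{N-1} = N$ from Lemma \ref{L:Tk}(i) and then maximizing over the multiplier ball $\norm{\eta} \leq c$ produces
\begin{equation*}
	\Psi\left(x^{N}\right) - \Psi\left(x^{\ast}\right) + c\norm{\AAA x^{N} - b} \leq \beta := \frac{B_{\rho , c}\left(x^{\ast}\right)}{2N}.
\end{equation*}
Dropping the non-negative feasibility term gives \eqref{T:GLrate:1}. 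For \eqref{T:GLrate:2} I would reuse the saddle point argument of Theorem \ref{T:FGLrate} verbatim: Assumption \ref{A:ConvLag} yields $\Psi\left(x^{\ast}\right) - \Psi\left(x^{N}\right) \leq \act{y^{\ast} , \AAA x^{N} - b} \leq (c/2)\norm{\AAA x^{N} - b}$ because $c \geq 2\norm{y^{\ast}}$, and combining this with the displayed inequality gives $(c/2)\norm{\AAA x^{N} - b} \leq \beta$, hence $\norm{\AAA x^{N} - b} \leq 2\beta/c = B_{\rho , c}\left(x^{\ast}\right)/(cN)$.

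I expect no genuine obstacle; the only point requiring a little care is the simultaneous dependence of both sides on $\eta$ in the maximization step, which I would resolve by committing to the explicit maximizer $\eta = c\left(\AAA x^{N} - b\right)/\norm{\AAA x^{N} - b}$ (the case $\AAA x^{N} = b$ being trivial), so that $\act{\eta , \AAA x^{N} - b} = c\norm{\AAA x^{N} - b}$ while $\norm{\eta - y^{0}}^{2} \leq \left(\norm{y^{0}} + c\right)^{2}$. The conceptual takeaway is that the non-ergodic $O(1/N)$ convex rate drops out of exactly the same machinery as the $O(1/N^{2})$ strongly convex rate, through a strictly easier telescoping.
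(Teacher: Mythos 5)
Your proposal is correct and follows the paper's own proof essentially verbatim: specialize Lemma \ref{L:FGLMain} to $\xi = x^{\ast}$, $\sigma = 0$, $p = 1$, telescope the resulting recursion using $t_{-1} = 0$ and the definition \eqref{D:Delta}, drop the augmented term in $s_{N}$, use $t_{N-1} = N$ from Lemma \ref{L:Tk}(i), maximize over $\norm{\eta} \leq c$, and finish with the same saddle-point argument as in Theorem \ref{T:FGLrate}. Your explicit choice of the maximizer $\eta = c\left(\AAA x^{N} - b\right)/\norm{\AAA x^{N} - b}$ is a slightly more careful rendering of the paper's ``taking the maximum of both sides'' step, but it is the same argument in substance.
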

	\begin{proof}
		From Lemma \ref{L:FGLMain}, after we substitute $\xi = x^{\ast}$, $\sigma = 0$ and $p = 1$ (recall that $\rho_{k} = \rho$ and $\tau_{k}=1 $), we obtain
		\begin{equation*}
			t_{k}s_{k + 1} - t_{k - 1}s_{k} \leq \Delta_{P}\left(x^{\ast} , z^{k} , z^{k + 1}\right) + \frac{1}{\mu\rho}\Delta\left(\eta , y^{k} , y^{k + 1}\right).
		\end{equation*}
		Using the definition of $\Delta_{P}$ and $\Delta$ (see \eqref{D:Delta}) we have that
		\begin{align*}
			t_{k}s_{k + 1} - t_{k - 1}s_{k} & \leq \frac{1}{2}\left(\norm{x^{\ast} - z^{k}}_{P}^{2} - \norm{x^{\ast} - z^{k + 1}}_{P}^{2}\right) + \frac{1}{2\mu\rho}\left(\norm{\eta - y^{k}}^{2} -  \norm{\eta - y^{k + 1}}^{2}\right).
		\end{align*}
 Summing this inequality for all $k = 0 , 1 , \ldots , N - 1$ (recall that $t_{-1} = 0$ and $t_{0} =1$) it follows
		\begin{equation*}
			t_{N - 1}\left(\Psi\left(x^{N}\right) - \Psi\left(x^{\ast}\right) + \act{\eta , \AAA x^{N} - b}\right) \leq	t_{N - 1}s_{N} - t_{-1}s_{0} = t_{N - 1}s_{N}\leq \frac{1}{2}\norm{x^{\ast} - z^{0}}_{P}^{2} + \frac{1}{2\mu\rho}\norm{\eta - y^{0}}^{2}.
		\end{equation*}
		Therefore, by taking the maximum of both sides over $\norm{\eta} \leq c$, proves that (recall that $t_{N - 1} = N$ using Lemma \ref{L:Tk}(i))
		\begin{equation*}
			\Psi\left(x^{N}\right) - \Psi\left(x^{\ast}\right) + c\norm{\AAA x^{N} - b} \leq \frac{B_{\rho , c}\left(x^{\ast}\right)}{2N},
		\end{equation*}
		and the desired results follow exactly as done at the end of the proof of Theorem \ref{T:FGLrate}.
	\end{proof}	
	
\subsection{Classical Ergodic Rate of Convergence} \label{SSec:Ergodic}
	Although deriving ergodic type results was not our primary goal, to conclude this section we further illustrate the versatility of our framework by providing the  fast and classical  {\em ergodic} rate of convergence results. Before doing so, it should be noted that in the setting of producing such weaker results of ergodic type, two main features of our main framework FLAG are simplified as follows:
	\begin{itemize}
		\item[(i)] The auxiliary sequence $\Seq{\lambda}{k}$ is simply replaced by the multiplier sequence $\Seq{y}{k}$, i.e.,  the sequence  $\Seq{x}{k}$
		plays no role. Thus, $\Seq{z}{k}$ is the only primal sequence, so that the main iteration \eqref{FGL:PriStep} reads now $z^{k + 1} \in \SSS_{t_{k}}\left(z^{k} , y^{k}\right)$.
		\item[(ii)] Only the parameter $\rho_{k}$ and $\tau_{k}$ are still used to include both the fast and classical results.
	\end{itemize}
	According to the above, since we replace the auxiliary multiplier $\lambda^{k}$ with $y^{k}$, it is easy to verify from the proof of Lemma \ref{L:FGLIter} (by combining \eqref{L:FGLIter:1} and \eqref{L:FGLIter:2} there) that in this case we have
		\begin{equation} \label{R:ErgoGL:1}
			\Lag_{\gamma_{k}}\left(z^{k + 1} , \eta\right) - \Lag_{\gamma_{k}}\left(\xi , \eta\right) \leq \tau_{k}\Delta_{P}\left(\xi , z^{k} , z^{k + 1}\right) - \frac{\sigma}{2}\norm{\xi - z^{k + 1}}^{2} + \frac{1}{\mu\rho_{k}}\Delta\left(\eta , y^{k} , y^{k + 1}\right),
		\end{equation}				
		where $\gamma_{k} := \left(1 + \delta - \mu\right)\rho_{k} \geq 0$. Therefore, in this case we can consider scaling parameters $\mu$ in the larger interval $\left(0 , 1 + \delta\right]$, instead of $\left(0 , \delta\right]$ as stated in the original version of FLAG above.
\medskip

	We begin with the result in the strongly convex setting (\ie $p = 2$).
	\begin{corollary}[A fast ergodic function values and feasibility violation rates] \label{C:FGLrate}
		Let $\left\{ \left(z^{k} , y^{k}\right) \right\}_{k \in \nn}$ be a sequence generated by FLAG. Suppose that $\sigma >0$ and $0 \preceq P \preceq \left(\sigma/2\right)I_{n}$. Then, for any optimal solution $z^{\ast}$ of problem (P), the following holds for the ergodic sequence ${\bar z}^{N} = t_{N -1}^{-2}\sum_{k = 0}^{N - 1} t_{k}z^{k + 1}$
   		\begin{align}
			\Psi\left({\bar z}^{N}\right) - \Psi\left(z^{\ast}\right) & \leq \frac{B_{\rho , c}\left(z^{\ast}\right)}{2N^{2}}, \label{C:FGLrate:1} \\
       		\norm{\AAA {\bar z}^{N} - b} & \leq \frac{B_{\rho , c}\left(z^{\ast}\right)}{cN^{2}}, \label{C:FGLrate:2}
		\end{align}
		where $B_{\rho , c}\left(z^{\ast}\right) := 4\left(\norm{z^{\ast} - z^{0}}_{P}^{2} + \frac{1}{\mu\rho}\left(\norm{y^{0}} + c\right)^{2}\right)$.
	\end{corollary}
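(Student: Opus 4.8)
The goal is to establish a fast $O(1/N^2)$ ergodic rate for the strongly convex case, and the structure should closely mirror the non-ergodic Theorem~\ref{T:FGLrate}, but using the simplified inequality \eqref{R:ErgoGL:1} in place of the machinery built through the auxiliary sequence $\Seq{\lambda}{k}$. The plan is to start from \eqref{R:ErgoGL:1} rather than from Lemma~\ref{L:FGLMain}, since in the ergodic setting the sequence $\Seq{\lambda}{k}$ has been replaced by $\Seq{y}{k}$ and the sequence $\Seq{x}{k}$ plays no role. I would set $\xi = z^{\ast}$, use $p = 2$ so that $\rho_{k} = \rho t_{k}$ and $\tau_{k} = t_{k}$, and multiply \eqref{R:ErgoGL:1} by $t_{k}$ to bring it into a telescoping-ready form.

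\textbf{The main algebraic step.} After multiplying by $t_{k}$ and expanding $\Delta_{P}$ and $\Delta$ via \eqref{D:Delta}, the key is to exploit $0 \preceq P \preceq (\sigma/2)I_{n}$ exactly as in Theorem~\ref{T:FGLrate}: the strong-convexity term $-\frac{t_{k}\sigma}{2}\norm{z^{\ast} - z^{k+1}}^{2}$ dominates $-t_{k}\norm{z^{\ast}-z^{k+1}}_{P}^{2}$, which lets me convert the coefficient $t_{k}^{2}/2$ on the $P$-norm terms into a clean telescoping pair $\tfrac12(t_{k}^{2}\norm{z^{\ast}-z^{k}}_{P}^{2} - t_{k+1}^{2}\norm{z^{\ast}-z^{k+1}}_{P}^{2})$ using the estimate $t_{k+1}^{2} \le t_{k}^{2} + 2t_{k}$ from Lemma~\ref{L:Tk}(ii). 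Summing over $k = 0, 1, \ldots, N-1$ then telescopes both the $P$-norm terms and the dual $\norm{\eta - y^{k}}^{2}$ terms, leaving only the initial quantities $\tfrac12\norm{z^{\ast}-z^{0}}_{P}^{2} + \tfrac{1}{2\mu\rho}\norm{\eta - y^{0}}^{2}$ on the right-hand side, and $t_{N-1}^{2}\big(\Lag_{\gamma_{N-1}}(\bar z^{N},\eta) - \Psi(z^{\ast})\big)$-type expressions on the left once I apply convexity.

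\textbf{Where the ergodic averaging enters.} The one genuinely new ingredient compared with Theorem~\ref{T:FGLrate} is that the left-hand side, after summation, accumulates $\sum_{k=0}^{N-1} t_{k}\big(\Lag(z^{k+1},\eta) - \Lag(z^{\ast},\eta)\big)$. By convexity of $\Psi$ and linearity of the constraint map $\AAA$, and by the definition $\bar z^{N} = t_{N-1}^{-2}\sum_{k=0}^{N-1} t_{k} z^{k+1}$ (using $\sum_{k=0}^{N-1} t_{k} = t_{N-1}^{2}$, which follows from the recursion $t_{k}^{2} - t_{k-1}^{2} = t_{k}$ in Lemma~\ref{L:Tk}(ii)), Jensen's inequality gives $t_{N-1}^{2}\big(\Psi(\bar z^{N}) - \Psi(z^{\ast}) + \act{\eta, \AAA \bar z^{N} - b}\big) \le \sum_{k=0}^{N-1} t_{k}\big(\Lag(z^{k+1},\eta) - \Lag(z^{\ast},\eta)\big)$. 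I would also need to discard the nonnegative augmented penalty term $\tfrac{\gamma_{k}}{2}\norm{\AAA z^{k+1} - b}^{2}$ from the left side of \eqref{R:ErgoGL:1} to pass from $\Lag_{\gamma_{k}}$ down to $\Lag$; this is where $\gamma_{k} \ge 0$ is used, and it is what licenses the wider step-size range $\mu \in (0, 1+\delta]$.

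\textbf{Finishing.} Once $t_{N-1}^{2}\big(\Psi(\bar z^{N}) - \Psi(z^{\ast}) + \act{\eta, \AAA \bar z^{N} - b}\big) \le \tfrac12\norm{z^{\ast}-z^{0}}_{P}^{2} + \tfrac{1}{2\mu\rho}\norm{\eta - y^{0}}^{2}$ is in hand, I maximize both sides over $\norm{\eta} \le c$, bound $\norm{\eta - y^{0}} \le \norm{y^{0}} + c$, and divide by $t_{N-1}^{2} \ge N^{2}/4$ (Lemma~\ref{L:Tk}(ii)) to obtain $\Psi(\bar z^{N}) - \Psi(z^{\ast}) + c\norm{\AAA \bar z^{N} - b} \le B_{\rho,c}(z^{\ast})/(2N^{2})$. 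The function-value bound \eqref{C:FGLrate:1} is immediate, and the feasibility bound \eqref{C:FGLrate:2} follows by the identical saddle-point argument used at the close of Theorem~\ref{T:FGLrate}, invoking $c \ge 2\norm{y^{\ast}}$ and $\Lag(z^{\ast}, y^{\ast}) \le \Lag(\bar z^{N}, y^{\ast})$ to absorb the factor of two. I expect the main obstacle to be purely bookkeeping: keeping the $t_{k}$-weights consistent through the multiplication, the telescoping, and the averaging identity $\sum_{k=0}^{N-1} t_{k} = t_{N-1}^{2}$, rather than any conceptual difficulty, since the strong-convexity/$P$-domination trick is already validated in Theorem~\ref{T:FGLrate}.
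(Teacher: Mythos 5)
Your proposal is correct and follows essentially the same route as the paper's own proof: starting from \eqref{R:ErgoGL:1} with $\xi = z^{\ast}$, multiplying by $t_{k}$, using $0 \preceq P \preceq \left(\sigma/2\right)I_{n}$ together with $t_{k+1}^{2} \leq t_{k}^{2} + 2t_{k}$ to create the telescoping pair, discarding the nonnegative augmented term, applying Jensen's inequality with $\sum_{k=0}^{N-1} t_{k} = t_{N-1}^{2}$, maximizing over $\norm{\eta} \leq c$ with $t_{N-1}^{2} \geq N^{2}/4$, and closing with the saddle-point argument from Theorem \ref{T:FGLrate}. Your passing reference to a $\Lag_{\gamma_{N-1}}$-type expression on the left-hand side is harmless, since you correctly drop the augmented penalty (using $\gamma_{k} \geq 0$) before the averaging step, exactly as the paper does.
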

	\begin{proof}
		Substituting in \eqref{R:ErgoGL:1} $\xi = z^{\ast}$ and multiplying both sides by $t_{k}$ yields (recall that $\rho_{k} = \rho t_{k}$ and $\tau_{k} = t_{k}$)
		\begin{equation*}
			t_{k}\left(\Psi\left(z^{k + 1}\right) - \Psi\left(z^{\ast}\right) + \act{\eta , \AAA z^{k + 1} - b}\right) \leq t_{k}^{2}\Delta_{P}\left(z^{\ast} , z^{k} , z^{k + 1}\right) - \frac{t_{k}\sigma}{2}\norm{z^{\ast} - z^{k + 1}}^{2} + \frac{1}{\mu\rho}\Delta\left(\eta , y^{k} , y^{k + 1}\right).
		\end{equation*}
		where we have omitted the non-negative augmented term, which is present only in $\Lag_{\gamma_{k}}\left(z^{k + 1} , \eta\right)$. Using the definition of $\Delta_{P}$ and $\Delta$ (see \eqref{D:Delta}) and the fact that $P \preceq \left(\sigma/2\right)I_{n}$ we thus obtain
		\begin{align*}
			t_{k}\left(\Psi\left(z^{k + 1}\right) - \Psi\left(z^{\ast}\right) + \act{\eta , \AAA z^{k + 1} - b}\right) & \leq \frac{t_{k}^{2}}{2}\left(\norm{z^{\ast} - z^{k}}_{P}^{2} - \norm{z^{\ast} - z^{k + 1}}_{P}^{2}\right) - t_{k}\norm{z^{\ast} - z^{k + 1}}_{P}^{2} \nonumber \\
			& + \frac{1}{\mu\rho}\Delta\left(\eta , y^{k} , y^{k + 1}\right) \nonumber \\
			& \leq \frac{1}{2}\left(t_{k}^{2}\norm{z^{\ast} - z^{k}}_{P}^{2} - t_{k + 1}^{2}\norm{z^{\ast} - z^{k + 1}}_{P}^{2}\right) \nonumber \\
			& + \frac{1}{2\mu\rho}\left(\norm{\eta - y^{k}}^{2} - \norm{\eta - y^{k + 1}}^{2}\right),
		\end{align*}
		where the last inequality follows from Lemma \ref{L:Tk}(ii). Summing this inequality for all $k = 0 , 1 , \ldots , N - 1$ it follows that (recall that $t_{0} = 1$)
		\begin{equation*}
			\sum_{k = 0}^{N - 1} t_{k}\left(\Psi\left(z^{k + 1}\right) - \Psi\left(z^\ast \right) + \act{\eta , \AAA z^{k + 1} - b}\right) \leq \frac{1}{2}\norm{z^{\ast} - z^{0}}_{P}^{2} + \frac{1}{2\mu\rho}\norm{\eta - y^{0}}^{2}.
		\end{equation*}
		Using the relation $t_{k} = t_{k}^{2} - t_{k - 1}^{2}$ and the definition of ${\bar z}^{N}$, it easily follows (recall that $t_{-1} = 0$)
		\begin{equation*}
			\sum_{k = 0}^{N - 1} t_{k} = t_{N - 1}^{2} \quad \text{and} \quad \sum_{k = 0}^{N - 1} t_{k} \act{\eta , \AAA z^{k + 1} - b}=t_{N - 1}^{2} \act{\eta , \AAA{\bar z}^{N}-b}.
		\end{equation*}
		Therefore, thanks to Jensen's inequality for the convex function $\Psi$, and these relations, it follows from the  inequality above that
		\begin{equation*}
			t_{N - 1}^{2}\left(\Psi\left({\bar z}^{N}\right) - \Psi\left(z^{\ast}\right) + \act{\eta , \AAA{\bar z}^{N} - b}\right) \leq \frac{1}{2}\norm{z^{\ast} - z^{0}}_{P}^{2} + \frac{1}{2\mu\rho}\norm{\eta - y^{0}}^{2}.
		\end{equation*}
		Taking the maximum of both sides over $\norm{\eta} \leq c$, proves that (recall that $t_{N - 1}^{2} \geq N^{2}/4$ using Lemma \ref{L:Tk}(ii))
		\begin{equation*}
			\Psi\left({\bar z}^{N}\right) - \Psi\left(z^{\ast}\right) + c\norm{\AAA{\bar z}^{N} - b} \leq \frac{B_{\rho , c}\left(z^{\ast}\right)}{N^{2}}.
		\end{equation*}
		The required results now follow as done at the end of the proof of Theorem \ref{T:FGLrate}.
	\end{proof}		
	The classical ergodic rate of convergence result in the convex setting (\ie $\sigma = 0$ and $p = 1$) is recorded next.
	\begin{corollary}[An ergodic function values and feasibility violation rates] \label{C:GLrate}
		Let $\left\{ \left(z^{k} , y^{k}\right) \right\}_{k \in \nn}$ be a sequence generated by FLAG with $\sigma = 0$ and $t_{k} = 1$ for all $k \in \nn$. Then, for any optimal solution $z^{\ast}$ of problem (P), the following holds for the ergodic sequence ${\bar z}^{N} = N^{-1}\sum_{k = 0}^{N - 1} z^{k + 1}$
   		\begin{align}
			\Psi\left({\bar z}^{N}\right) - \Psi\left(z^{\ast}\right) & \leq \frac{B_{\rho , c}\left(z^{\ast}\right)}{2N}, \label{CT:GLrate:1} \\
       		\norm{\AAA {\bar z}^{N} - b} & \leq \frac{B_{\rho , c}\left(z^{\ast}\right)}{cN}, \label{C:GLrate:2}
		\end{align}
		where $B_{\rho , c}\left(z^{\ast}\right) := 2\left(\norm{z^{\ast} - x^{0}}_{P}^{2} + \frac{1}{\mu\rho}\left(\norm{y^{0}} + c\right)^{2}\right)$.
	\end{corollary}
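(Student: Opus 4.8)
The plan is to specialize the argument behind Corollary \ref{C:FGLrate} to the purely convex regime $\sigma = 0$, $p = 1$, $t_k = 1$, in which case $\rho_k = \rho$, $\tau_k = 1$, and the $t_k^2$-weighting collapses to a plain uniform average. First I would invoke the ergodic form of the \nice map property, namely inequality \eqref{R:ErgoGL:1}, with $\xi = z^*$. Since $z^* \in \FFF$ we have $\AAA z^* = b$, so $\Lag_{\gamma_k}(z^*, \eta) = \Psi(z^*)$, while on the left the augmented term $(\gamma_k/2)\norm{\AAA z^{k+1} - b}^2 \geq 0$ (recall $\gamma_k \geq 0$) may be dropped. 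With $\sigma = 0$ and $\tau_k = 1$ this leaves, for every $\eta \in \real^m$,
\begin{equation*}
\Psi(z^{k+1}) - \Psi(z^*) + \act{\eta , \AAA z^{k+1} - b} \leq \Delta_P(z^*, z^k, z^{k+1}) + \frac{1}{\mu\rho}\Delta(\eta, y^k, y^{k+1}).
\end{equation*}

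Next I would write out $\Delta_P$ and $\Delta$ via \eqref{D:Delta} as consecutive differences and sum over $k = 0, 1, \ldots, N - 1$. Both sums telescope, and after discarding the non-positive remainders $-\norm{z^* - z^N}_P^2$ and $-\norm{\eta - y^N}^2$ the right-hand side reduces to $\tfrac{1}{2}\norm{z^* - z^0}_P^2 + \tfrac{1}{2\mu\rho}\norm{\eta - y^0}^2$; here the $x^0$ appearing in $B_{\rho,c}(z^*)$ is to be read as $z^0$, consistent with the fact that $t_k \equiv 1$ forces $x^k \equiv z^k$ and that $\{x^k\}$ is inert in the ergodic setting. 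The left-hand side is a sum of $N$ equally weighted terms, so Jensen's inequality for the convex $\Psi$ together with the linearity of $x \mapsto \AAA x - b$ lets me replace it by $N$ times the value at the ergodic average ${\bar z}^N = N^{-1}\sum_{k = 0}^{N - 1} z^{k + 1}$, giving
\begin{equation*}
N\left(\Psi({\bar z}^N) - \Psi(z^*) + \act{\eta , \AAA{\bar z}^N - b}\right) \leq \frac{1}{2}\norm{z^* - z^0}_P^2 + \frac{1}{2\mu\rho}\norm{\eta - y^0}^2 .
\end{equation*}

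Finally I would maximize both sides over $\norm{\eta} \leq c$: the first two terms on the left are independent of $\eta$ and $\max_{\norm{\eta} \leq c} \act{\eta , \AAA{\bar z}^N - b} = c\norm{\AAA{\bar z}^N - b}$, while $\max_{\norm{\eta} \leq c} \norm{\eta - y^0}^2 = (\norm{y^0} + c)^2$; since the displayed inequality holds pointwise in $\eta$, the maximum of the left is dominated by the maximum of the right, and dividing by $N$ produces the combined estimate $\Psi({\bar z}^N) - \Psi(z^*) + c\norm{\AAA{\bar z}^N - b} \leq B_{\rho,c}(z^*)/(2N) =: \beta$. Bound \eqref{CT:GLrate:1} is then immediate upon dropping the non-negative feasibility term, and \eqref{C:GLrate:2} follows exactly as at the close of the proof of Theorem \ref{T:FGLrate}: the saddle-point inequality yields $\Psi(z^*) - \Psi({\bar z}^N) \leq \act{y^*, \AAA{\bar z}^N - b} \leq \norm{y^*}\,\norm{\AAA{\bar z}^N - b} \leq \tfrac{c}{2}\norm{\AAA{\bar z}^N - b}$ (recall $c \geq 2\norm{y^*}$), so feeding this into $c\norm{\AAA{\bar z}^N - b} \leq \beta + (\Psi(z^*) - \Psi({\bar z}^N))$ gives $\tfrac{c}{2}\norm{\AAA{\bar z}^N - b} \leq \beta$, which is the asserted feasibility rate. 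No genuine obstacle arises — this is the lightest of the four rate statements — so the only care needed is the bookkeeping of the two maximizations and the harmless $x^0/z^0$ identification in $B_{\rho,c}$.
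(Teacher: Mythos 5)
Your proof is correct and takes essentially the same route as the paper: the paper's proof of Corollary \ref{C:GLrate} simply says the result follows from the proof of Corollary \ref{C:FGLrate} with $t_{k} \equiv 1$, and your argument is exactly that specialization (inequality \eqref{R:ErgoGL:1} with $\xi = z^{\ast}$, telescoping of $\Delta_{P}$ and $\Delta$, Jensen for $\Psi$, maximization over $\norm{\eta} \leq c$, then the saddle-point step from Theorem \ref{T:FGLrate}) carried out explicitly. Your reading of $x^{0}$ as $z^{0}$ in $B_{\rho , c}\left(z^{\ast}\right)$ is also the right call, since with $t_{k} \equiv 1$ the sequence $\Seq{x}{k}$ plays no role and the telescoped bound naturally involves $z^{0}$.
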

	\begin{proof}
		The result trivially follows from the proof of Corollary \ref{C:FGLrate} since $t_{k} = 1$ for all $k \in \nn$.
	\end{proof}
	As discussed in the introduction, similar ergodic $O(1/N)$ results have been obtained in many works in the literature and on many variants of Lagrangian-based methods; see, for instance, \cite{CP2011,HY2012}{\bl \cite{MS2013}},\cite{
ST2014,ST2019, LL2019,LLF2020} and references therein.	
	
\section{Applications: Nice Primal Algorithmic Maps and Their FLAG} \label{Sec:Examples}
	In this section, we will show that several well-known iconic Lagrangian-based methods {\em all admit} \nice primal algorithmic maps. Recall that the notion of \nice primal algorithmic maps deals only with the update of the primal variable(s), while the multiplier update is the same for most Lagrangian-based methods, which in FLAG, is simply given by:
	\begin{equation} \label{MultiUpdate}
		y^{+} =  y + \mu\rho_{t}\left(\AAA z^{+} - b\right),
	\end{equation}
	where $\rho_{t} = \rho$ in the convex case or $\rho_{t} = \rho t$ in the strongly convex case. Thus, given any Lagrangian-based method, we only need to consider the corresponding primal algorithmic map $\SSS_{t}\left(\cdot\right)$ and use it in FLAG to produce a new variant of this algorithm with faster {\em non-ergodic} rate of convergence: $O(1/N^{2})$ when the objective is strongly convex and $O(1/N)$ when the objective is convex, as proved in Theorems \ref{T:FGLrate} and \ref{T:GLrate}, respectively.
\medskip

	It is well-known that a major drawback in Lagrangian-based methods is the presence of the quadratic augmented penalty term, which often renders a primal minimization step as a difficult one. This difficulty can be overcomed by {\em linearizing the quadratic} expression around a given current iterate. An important point to stress in this respect, is that in all of the instances below, we will show that we can always consider linearizing the quadratic penalty term, and prove that it generates a corresponding {\em \nice} primal algorithmic map $\SSS_{t}\left(\cdot\right)$.
\medskip

	 Before presenting these results, for a potential user who is interested in developing and/or analyzing a Lagrangian-based method for solving a certain convex optimization problem, we offer the following informal recipe for obtaining faster rates of the designed method.
\bigskip

	\noindent {\bf A recipe for finding the rate of convergence  of Lagrangian-based methods.}
	\begin{itemize}
		\item[$\rm{(i)}$] Formulate the problem at hand via model (P), \ie identify the relevant problem data $[\Psi, \AAA, b, \sigma]$, where the value of the modulus of convexity $\sigma$ will infer the value of $p$ ($1$ or $2$) to be used in FLAG,  and which determine the type of rate that can be achieved (classical or fast).
		\item[$\rm{(ii)}$] Define the desired  iterative step(s) of the primal algorithmic map $\SSS_{t}\left(\cdot\right)$ applied on the augmented Lagrangian $\Lag_{\rho_{t}}\left(\cdot\right)$ of model (P).
		\item[$\rm{(iii)}$] Show that the defined primal algorithmic map is \nice according to Definition \ref{D:AlgoMapNice} (\ie determine the parameter $\delta$ and the matrices $P$ and $Q$) using information on the involved objective function $\Psi\left(\cdot\right)$, the linear mapping $\AAA$ and the iterative step(s) defining $\SSS_{t}\left(\cdot\right)$.
		\item[$\rm{(iv)}$] Apply Theorem \ref{T:FGLrate} (if $p = 2$) or Theorem \ref{T:GLrate} (if $p = 1$)  to obtain a faster non-ergodic rate of convergence for the designed method.
	\end{itemize}	
	To proceed, we recall the following well-known result that will be useful below. For completeness we include its simple proof.
	\begin{lemma}[Composite proximal inequality] \label{L:ProxIneqLag}
		Let $\varphi : \real^{d} \rightarrow \erl$ be a proper, lower-semi continuous and $\sigma$-strongly convex function ($\sigma \geq 0$) and let $c : \real^{d} \rightarrow \real$ be a differentiable function. Given a matrix $W \in \bbS_{+}^{d}$, we define:
		\begin{equation*}
			w^{+} \in \argmin_{\xi \in \real^{d}} \left\{ \varphi\left(\xi\right) + c\left(\xi\right) + \frac{1}{2}\norm{\xi - w}_{W}^{2} \right\}.		
		\end{equation*}
		Then, for any $\xi \in \real^{d}$, we have
		\begin{equation*}
			\varphi\left(w^{+}\right) - \varphi\left(\xi\right) + \act{\nabla c\left(w^{+}\right) , w^{+} - \xi} \leq \frac{1}{2}\left(\norm{\xi - w}_{W}^{2} -  \norm{\xi - w^{+}}_{W}^{2} -  \norm{w^{+} - w}_{W}^{2}\right) - \frac{\sigma}{2}\norm{\xi - w^{+}}^{2}.
		\end{equation*}
	\end{lemma}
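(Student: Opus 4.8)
The plan is to read off the first-order optimality condition satisfied by the minimizer $w^{+}$, extract a subgradient of $\varphi$, and then convert that information into the stated inequality by combining the strong convexity of $\varphi$ with the Pythagoras three-points identity \eqref{Pyth}.

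First I would write the objective as $F\left(\xi\right) = h\left(\xi\right) + c\left(\xi\right)$ with $h\left(\xi\right) := \varphi\left(\xi\right) + \frac{1}{2}\norm{\xi - w}_{W}^{2}$, where $h$ is proper, lower semi-continuous and convex while $c$ is differentiable. Since $w^{+}$ is a (global) minimizer of $F$, Fermat's rule together with the sum rule for a convex function plus a smooth function yields $0 \in \partial h\left(w^{+}\right) + \nabla c\left(w^{+}\right)$; and since the gradient of the $W$-quadratic at $w^{+}$ is $W\left(w^{+} - w\right)$, we have $\partial h\left(w^{+}\right) = \partial \varphi\left(w^{+}\right) + W\left(w^{+} - w\right)$. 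Hence there exists a subgradient $g \in \partial \varphi\left(w^{+}\right)$ with $g = -\nabla c\left(w^{+}\right) - W\left(w^{+} - w\right)$.

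Next, the $\sigma$-strong convexity of $\varphi$ supplies the subgradient inequality $\varphi\left(\xi\right) \geq \varphi\left(w^{+}\right) + \act{g , \xi - w^{+}} + \frac{\sigma}{2}\norm{\xi - w^{+}}^{2}$ for every $\xi \in \real^{d}$. Substituting the expression for $g$ and rearranging gives $\varphi\left(w^{+}\right) - \varphi\left(\xi\right) \leq \act{\nabla c\left(w^{+}\right) , \xi - w^{+}} + \act{W\left(w^{+} - w\right) , \xi - w^{+}} - \frac{\sigma}{2}\norm{\xi - w^{+}}^{2}$. Adding $\act{\nabla c\left(w^{+}\right) , w^{+} - \xi}$ to both sides, the two inner products involving $\nabla c\left(w^{+}\right)$ cancel, leaving $\varphi\left(w^{+}\right) - \varphi\left(\xi\right) + \act{\nabla c\left(w^{+}\right) , w^{+} - \xi} \leq \act{W\left(w^{+} - w\right) , \xi - w^{+}} - \frac{\sigma}{2}\norm{\xi - w^{+}}^{2}$. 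Finally I would apply the identity \eqref{Pyth} with matrix $W$ to the three points $\xi$, $w^{+}$, $w$, taking $w^{+}$ as the middle argument, so that $2\act{W\left(w^{+} - w\right) , \xi - w^{+}} = \norm{\xi - w}_{W}^{2} - \norm{\xi - w^{+}}_{W}^{2} - \norm{w^{+} - w}_{W}^{2}$, which delivers the claimed bound verbatim.

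The only delicate point is justifying the optimality condition $0 \in \partial\varphi\left(w^{+}\right) + \nabla c\left(w^{+}\right) + W\left(w^{+} - w\right)$ when $c$ is merely differentiable and not assumed convex, so that $F$ need not be convex. Here one uses that $w^{+}$ is a global minimizer to infer nonnegativity of the one-sided directional derivative of $F$ along every segment emanating from $w^{+}$; since the smooth part contributes precisely $\act{\nabla c\left(w^{+}\right) , \cdot}$ to that directional derivative, the remaining convex part $h$ is forced to satisfy $-\nabla c\left(w^{+}\right) \in \partial h\left(w^{+}\right)$, whence $-\nabla c\left(w^{+}\right) - W\left(w^{+} - w\right) \in \partial\varphi\left(w^{+}\right)$. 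Once this inclusion is in hand, everything else is routine algebra and a single application of \eqref{Pyth}.
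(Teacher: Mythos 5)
Your proof is correct and takes essentially the same route as the paper's: read off the first-order optimality condition $\bo \in \partial\varphi\left(w^{+}\right) + \nabla c\left(w^{+}\right) + W\left(w^{+} - w\right)$, apply the $\sigma$-strong-convexity subgradient inequality at $w^{+}$, and conclude with one application of the three-points identity \eqref{Pyth}. Your closing paragraph justifying the optimality condition when $c$ is merely differentiable (so that the full objective need not be convex) is a careful addition that the paper leaves implicit, but it does not alter the argument.
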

	\begin{proof}
		Writing the first-order optimality condition of the optimization problem yields, for any $\xi \in \real^{d}$
		\begin{equation*}
			\bo \in \partial\varphi\left(w^{+}\right) + \nabla c\left(w^{+}\right) + W\left(w^{+} - w\right),
		\end{equation*}
		which means that
		\begin{equation*}
			\nabla c\left(w^{+}\right) + W\left(w^{+} - w\right) \equiv-\xi^{+} \in -\partial\varphi\left(w^{+}\right).
		\end{equation*}		
		On the other hand, using the $\sigma$-strong convexity of $\varphi$, for all $\xi \in \real^{d}$, we have
		\begin{equation*}
			\varphi\left(w^{+}\right) - \varphi\left(\xi\right) \leq \act{-\xi^{+} , \xi - w^{+}} - \frac{\sigma}{2}\norm{\xi - w^{+}}^{2} = \act{\nabla c\left(w^{+}\right) + W\left(w^{+} - w\right), \xi - w^{+}} - \frac{\sigma}{2}\norm{\xi - w^{+}}^{2}.
		\end{equation*}
		By rearranging terms and using the Pythagoras identity \eqref{Pyth} we obtain the desired result.
	\end{proof}		
	
\subsection{Methods for the Basic Model (P)}
	As we discussed in Section \ref{Sec:GL}, the most basic example of a primal algorithmic map is the mapping $\SSS_{t}\left(\cdot\right)$ that performs an \textit{exact minimization} on the augmented Lagrangian itself, or a proximal variant of it.
	
\subsubsection{Example I: Augmented Lagrangian} \label{SSec:AL}
	For the classical (Proximal) Augmented Lagrangian method, the primal step of FLAG is given, in this case, by
	\begin{equation} \label{PAL:Prim}
		z^{+} = \SSS_{t}\left(z , \lambda\right) \equiv \argmin_{\xi} \left\{ \Psi\left(\xi\right) + \act{\lambda , \AAA\xi - b} + \frac{\rho_{t}}{2}\norm{\AAA\xi - b}^{2} + \frac{\tau_{t}}{2}\norm{\xi - z}_{M}^{2} \right\},
	\end{equation}		
	where $M \succeq 0$ is a weight matrix defining the additional proximal term. Therefore, if $M \neq 0$ then the algorithmic map $\SSS_{t}\left(\cdot\right)$ can be seen as a {\em proximal minimization} applied on the augmented Lagrangian $\Lag_{\rho_{t}}\left(\cdot , y\right)$, for a fixed $y \in \real^{m}$. In this case, we can show the following result, which simple proof is omitted; instead see the proof of Lemma \ref{L:NiceLAL} below (which essentially includes this lemma with an adequate choice of the involved matrices).
	\begin{lemma}[Proximal AL is nice] \label{L:NicePAL}
		Let $M \succeq 0$, the algorithmic map $\SSS_{t}\left(\cdot\right)$ defined in \eqref{PAL:Prim} is \nice with $\delta = 1$ and $P = Q = M \in \bbS_{+}^{n}$.
	\end{lemma}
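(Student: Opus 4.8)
The plan is to read the minimization \eqref{PAL:Prim} as a direct instance of the Composite proximal inequality (Lemma \ref{L:ProxIneqLag}) and then merely rearrange. I would split the objective in \eqref{PAL:Prim} by setting $\varphi = \Psi$ (which carries the $\sigma$-strong convexity), $c\left(\xi\right) := \act{\lambda , \AAA\xi - b} + \frac{\rho_{t}}{2}\norm{\AAA\xi - b}^{2}$ (a differentiable function), and taking the proximal matrix to be $W = \tau_{t}M \in \bbS_{+}^{n}$ centered at $w = z$. With these identifications the point $z^{+}$ is exactly the minimizer $w^{+}$ of Lemma \ref{L:ProxIneqLag}, since $\frac{1}{2}\norm{\xi - z}_{W}^{2} = \frac{\tau_{t}}{2}\norm{\xi - z}_{M}^{2}$.

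First I would record the gradient $\nabla c\left(\xi\right) = \AAA^{T}\left(\lambda + \rho_{t}\left(\AAA\xi - b\right)\right)$, so that $\nabla c\left(z^{+}\right) = \AAA^{T}\left(\lambda + \rho_{t}\left(\AAA z^{+} - b\right)\right)$. Applying Lemma \ref{L:ProxIneqLag} then yields, for every $\xi$,
\begin{equation*}
    \Psi\left(z^{+}\right) - \Psi\left(\xi\right) + \act{\nabla c\left(z^{+}\right) , z^{+} - \xi} \leq \frac{\tau_{t}}{2}\left(\norm{\xi - z}_{M}^{2} - \norm{\xi - z^{+}}_{M}^{2} - \norm{z^{+} - z}_{M}^{2}\right) - \frac{\sigma}{2}\norm{\xi - z^{+}}^{2}.
\end{equation*}
The crucial simplification comes from restricting to feasible $\xi \in \FFF$, so that $\AAA\xi = b$: the inner product collapses to $\act{\nabla c\left(z^{+}\right) , z^{+} - \xi} = \act{\lambda + \rho_{t}\left(\AAA z^{+} - b\right) , \AAA z^{+} - b} = \act{\lambda , \AAA z^{+} - b} + \rho_{t}\norm{\AAA z^{+} - b}^{2}$.

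It then remains to match both sides against the augmented Lagrangian. On the left, since $\AAA\xi = b$ annihilates every term of $\Lag_{\rho_{t}}\left(\xi , \lambda\right)$ except $\Psi\left(\xi\right)$, I would add $\frac{\rho_{t}}{2}\norm{\AAA z^{+} - b}^{2}$ to both sides and collect terms to recognize the left-hand side as $\Lag_{\rho_{t}}\left(z^{+} , \lambda\right) - \Lag_{\rho_{t}}\left(\xi , \lambda\right)$. On the right, the residual penalty becomes $-\rho_{t}\norm{\AAA z^{+} - b}^{2} + \frac{\rho_{t}}{2}\norm{\AAA z^{+} - b}^{2} = -\frac{\rho_{t}}{2}\norm{\AAA z^{+} - b}^{2}$, while the $M$-terms reassemble, via the definition \eqref{D:Delta}, into $\tau_{t}\Delta_{M}\left(\xi , z , z^{+}\right) - \frac{\tau_{t}}{2}\norm{z^{+} - z}_{M}^{2}$. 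Comparing with \eqref{D:AlgoMapNice:1} gives precisely the \nice inequality with $P = Q = M$ and $\delta = 1$.

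There is no genuine obstacle here; the only point requiring care is the bookkeeping of the penalty term. The inner-product contribution produces a \emph{full} $\rho_{t}\norm{\AAA z^{+} - b}^{2}$ — precisely because $\nabla c$ is evaluated at $z^{+}$ rather than at $\xi$, and because the cross term $\act{\lambda,\AAA\xi-b}$ vanishes by feasibility — and it is exactly this full coefficient, net of the $\frac{\rho_{t}}{2}$ reinstated when completing $\Lag_{\rho_{t}}\left(z^{+} , \lambda\right)$ on the left, that delivers the maximal admissible value $\delta = 1$ rather than a smaller fraction.
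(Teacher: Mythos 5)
Your proof is correct: taking $\varphi=\Psi$, $c\left(\xi\right)=\act{\lambda,\AAA\xi-b}+\frac{\rho_{t}}{2}\norm{\AAA\xi-b}^{2}$ and $W=\tau_{t}M$ in Lemma \ref{L:ProxIneqLag}, evaluating $\nabla c$ at $z^{+}$ against $z^{+}-\xi$ with $\AAA\xi=b$, and reinstating $\frac{\rho_{t}}{2}\norm{\AAA z^{+}-b}^{2}$ on the left does yield \eqref{D:AlgoMapNice:1} with $P=Q=M$ and $\delta=1$, and your accounting of the full $\rho_{t}\norm{\AAA z^{+}-b}^{2}$ versus the reinstated half is exactly the right point of care. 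Your route differs mildly from the paper's: the paper omits a standalone proof and instead derives Lemma \ref{L:NicePAL} from the proof of Lemma \ref{L:NiceLAL}, i.e., one observes that the exact augmented subproblem \eqref{PAL:Prim} coincides (up to constants in $\xi$) with the linearized subproblem of Lemma \ref{L:NiceLAL} after the shift $M\mapsto M+\rho\AAA^{T}\AAA$ (using $\rho_{t}=\rho\tau_{t}$, so $\frac{\rho_{t}}{2}\norm{\AAA\xi-\AAA z}^{2}=\frac{\tau_{t}}{2}\norm{\xi-z}_{\rho\AAA^{T}\AAA}^{2}$), whence $P=Q=\left(M+\rho\AAA^{T}\AAA\right)-\rho\AAA^{T}\AAA=M$ and $\delta=1$. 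Your direct instantiation is cleaner for this particular lemma: by keeping the full quadratic penalty inside $c$ you avoid the identity \eqref{L:NiceLAL:2} and all $\Delta_{\AAA^{T}\AAA}$ bookkeeping, and you need only $M\succeq 0$, exactly matching the lemma's hypothesis, whereas the reduction formally passes through Lemma \ref{L:NiceLAL}, which is stated under $M\succ 0$. What the paper's route buys in exchange is economy: one argument covers both the exact and the linearized case, with Lemma \ref{L:NicePAL} falling out as the special matrix choice.
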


\subsubsection{Example II: Proximal Linearized Augmented Lagrangian} \label{SSec:LAL}
	It should be noted that the Proximal AL method mentioned above requires to solve an optimization problem, which is usually not easy (even when $\Psi\left(\cdot\right)$ is prox-friendly) mainly because of the {\em quadratic augmented} term. One classical way to overcome this difficulty is to {\em linearize} the augmented term around the current iterate. Therefore, by choosing the algorithmic map $\SSS_{t}\left(\cdot\right)$ to be a {\em proximal gradient}  step applied on the augmented Lagrangian $\Lag_{\rho_{t}}$ yields the {\em Proximal Linearized AL Method}, see, \eg \cite{HY2012, ST2014}. In this case, we prove the following result.
	\begin{lemma}[Proximal Linearized AL is nice] \label{L:NiceLAL}
		Let $M \succ 0$, then the algorithmic map defined by
		\begin{equation*}
			z^{+} = \SSS_{t}\left(z , \lambda\right) \equiv \argmin_{\xi} \left\{ \Psi\left(\xi\right) + \act{\lambda , \AAA\xi - b} + \rho_{t}\act{\AAA z - b , \AAA\xi} + \frac{\tau_{t}}{2}\norm{\xi - z}_{M}^{2} \right\},
		\end{equation*}
		is nice with $\delta = 1$ and $P = Q =  M - \rho\AAA^{T}\AAA \succeq 0$.
	\end{lemma}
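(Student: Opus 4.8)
The plan is to apply the Composite proximal inequality (Lemma \ref{L:ProxIneqLag}) to the minimization defining $z^{+} = \SSS_{t}(z , \lambda)$ and then to reshape the resulting bound into the form demanded by Definition \ref{D:AlgoMapNice}. First I would identify the three ingredients of Lemma \ref{L:ProxIneqLag}: take $\varphi = \Psi$ (which is $\sigma$-strongly convex), take the smooth part $c(\xi) = \act{\lambda , \AAA\xi - b} + \rho_{t}\act{\AAA z - b , \AAA\xi}$ (affine in $\xi$, hence differentiable, with the constant gradient $\nabla c(\xi) = \AAA^{T}\lambda + \rho_{t}\AAA^{T}(\AAA z - b)$), and take the metric matrix $W = \tau_{t}M$ centered at $w = z$, so that $\tfrac{1}{2}\norm{\xi - z}_{W}^{2} = \tfrac{\tau_{t}}{2}\norm{\xi - z}_{M}^{2}$. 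Evaluating $\act{\nabla c(z^{+}) , z^{+} - \xi}$ and using $\AAA\xi = b$ for $\xi \in \FFF$ gives $\act{\nabla c(z^{+}) , z^{+} - \xi} = \act{\lambda , \AAA z^{+} - b} + \rho_{t}\act{\AAA z - b , \AAA z^{+} - b}$, so Lemma \ref{L:ProxIneqLag} yields a bound whose left-hand side is $\Psi(z^{+}) - \Psi(\xi) + \act{\lambda , \AAA z^{+} - b} + \rho_{t}\act{\AAA z - b , \AAA z^{+} - b}$ and whose right-hand side is $\tau_{t}\Delta_{M}(\xi , z , z^{+}) - \tfrac{\tau_{t}}{2}\norm{z^{+} - z}_{M}^{2} - \tfrac{\sigma}{2}\norm{\xi - z^{+}}^{2}$.

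Next I would turn the left-hand side into the Lagrangian gap. Since $\xi \in \FFF$ forces $\AAA\xi - b = \bo$, we have $\Lag_{\rho_{t}}(\xi , \lambda) = \Psi(\xi)$, while $\Lag_{\rho_{t}}(z^{+} , \lambda) = \Psi(z^{+}) + \act{\lambda , \AAA z^{+} - b} + \tfrac{\rho_{t}}{2}\norm{\AAA z^{+} - b}^{2}$. Hence the left-hand side above equals $\Lag_{\rho_{t}}(z^{+} , \lambda) - \Lag_{\rho_{t}}(\xi , \lambda) - \tfrac{\rho_{t}}{2}\norm{\AAA z^{+} - b}^{2} + \rho_{t}\act{\AAA z - b , \AAA z^{+} - b}$, and rearranging isolates the Lagrangian gap on the left while depositing the two feasibility-type terms $+\tfrac{\rho_{t}}{2}\norm{\AAA z^{+} - b}^{2} - \rho_{t}\act{\AAA z - b , \AAA z^{+} - b}$ on the right.

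The remaining and most delicate step is to reconcile the matrix $M$ on the right-hand side with those leftover feasibility terms. Using the linearity of both $\Delta_{(\cdot)}(\cdot , \cdot , \cdot)$ and $\norm{\cdot}_{(\cdot)}^{2}$ in their matrix argument, I would split $M = P + \rho\AAA^{T}\AAA$ with $P := M - \rho\AAA^{T}\AAA \succeq 0$, rewriting $\tau_{t}\Delta_{M}(\xi , z , z^{+}) - \tfrac{\tau_{t}}{2}\norm{z^{+} - z}_{M}^{2}$ as the target term $\tau_{t}\Delta_{P}(\xi , z , z^{+}) - \tfrac{\tau_{t}}{2}\norm{z^{+} - z}_{P}^{2}$ plus an $\AAA^{T}\AAA$-weighted remainder. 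Writing $a := \AAA z - b$ and $a^{+} := \AAA z^{+} - b$ and again invoking $\AAA\xi = b$, this remainder simplifies to $\rho\tau_{t}\bigl(\tfrac{1}{2}\norm{a}^{2} - \tfrac{1}{2}\norm{a^{+}}^{2} - \tfrac{1}{2}\norm{a^{+} - a}^{2}\bigr) = \rho\tau_{t}\act{a^{+} , a} - \rho\tau_{t}\norm{a^{+}}^{2}$. The crux is the elementary identity $\rho_{t} = \rho\tau_{t}$, valid in both regimes (namely $\rho_{t} = \rho$, $\tau_{t} = 1$ when $\sigma = 0$, and $\rho_{t} = \rho t$, $\tau_{t} = t$ when $\sigma > 0$): combining the remainder with the leftover feasibility terms $+\tfrac{\rho_{t}}{2}\norm{a^{+}}^{2} - \rho_{t}\act{a , a^{+}}$, the two cross terms $\act{a^{+} , a}$ cancel exactly and the quadratic terms collapse to $-\tfrac{\rho_{t}}{2}\norm{\AAA z^{+} - b}^{2}$. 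This reproduces precisely inequality \eqref{D:AlgoMapNice:1} with $\delta = 1$ and $P = Q = M - \rho\AAA^{T}\AAA$, whose positive semidefiniteness is exactly the standing requirement $M - \rho\AAA^{T}\AAA \succeq 0$ recorded in the statement. The main obstacle is nothing conceptual but the careful bookkeeping of these $\AAA$-dependent quadratic terms, and their clean cancellation hinges entirely on the relation $\rho_{t} = \rho\tau_{t}$.
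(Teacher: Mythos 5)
Your proposal is correct and takes essentially the same route as the paper's proof: both apply Lemma \ref{L:ProxIneqLag} with $\varphi = \Psi$, the same affine function $c$, and $W = \tau_{t}M$, and both hinge on the relation $\rho_{t} = \rho\tau_{t}$ to cancel the $\AAA$-dependent cross terms and produce the $-\frac{\rho_{t}}{2}\norm{\AAA z^{+} - b}^{2}$ term with $\delta = 1$. Your splitting $M = P + \rho\AAA^{T}\AAA$ before simplifying, with the residuals $a$, $a^{+}$, is just a reorganization of the paper's three-points-identity computation in \eqref{L:NiceLAL:2}; the algebra is identical.
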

	\begin{proof}
		Applying Lemma \ref{L:ProxIneqLag} with the function $\varphi\left(\cdot\right) = \Psi\left(\cdot\right)$, $c\left(\xi\right) = \act{\lambda + \rho_{t}\left(\AAA z - b\right) , \AAA\xi}$ and $W = \tau_{t}M$ yields for any $\xi \in \real^{n}$ that
		\begin{equation} \label{L:NiceLAL:1}
			\Psi\left(z^{+}\right) - \Psi\left(\xi\right) + \act{\lambda + \rho_{t}\left(\AAA z - b\right) , \AAA z^{+} - \AAA\xi} \leq \tau_{t}\Delta_{M}\left(\xi , z , z^{+}\right) - \frac{\tau_{t}}{2}\norm{z^{+} - z}_{M}^{2} - \frac{\sigma}{2}\norm{\xi - z^{+}}^{2}.
		\end{equation}
		Taking $\xi \in \FFF$ yields that $\AAA\xi = b$, and thus we can estimate the following term as:
		\begin{align}
			\act{\AAA z - b , \AAA z^{+} - \AAA\xi} & = \act{\AAA z^{+} - b , \AAA z^{+} - b} + \act{\AAA z - \AAA z^{+} , \AAA z^{+} - \AAA\xi} \nonumber \\
			& = \norm{\AAA z^{+} - b}^{2} + \Delta_{\AAA ^{T}\AAA}\left(\xi , z , z^{+}\right) - \frac{1}{2}\norm{z^{+} - z}_{\AAA ^{T}\AAA}^{2}. \label{L:NiceLAL:2}
		\end{align}
		Using \eqref{L:NiceLAL:2} in \eqref{L:NiceLAL:1} we obtain,
		\begin{align*}
			\Psi\left(z^{+}\right) - \Psi\left(\xi\right) + \act{\lambda , \AAA z^{+} - b} + \frac{\rho_{t}}{2}\norm{\AAA z^{+} - b}^{2} & \leq
			\tau_{t}\Delta_{M}\left(\xi , z , z^{+}\right) - \frac{\tau_{t}}{2}\norm{z^{+} - z}_{M}^{2} - \frac{\sigma}{2}\norm{\xi - z^{+}}^{2} \\
			&  - \rho_t \Delta_{\AAA ^{T}\AAA}\left(\xi , z , z^{+}\right) + \frac{\rho_t}{2}\norm{z^{+} - z}_{\AAA ^{T}\AAA}^{2} - \frac{\rho_t}{2}\norm{\AAA z^{+} - b}^{2}.
		\end{align*}
		Hence, the \nice property of $\SSS_{t}\left(\cdot\right)$ follows from Definition \ref{D:AlgoMapNice}, where we also used the fact that $\rho_{t} = \rho\tau_{t}$ and the definition of the augmented Lagrangian (see \eqref{D:AugLA}), which implies that $\Lag_{\rho}\left(\xi , \lambda\right) = \Psi\left(\xi\right)$ for all $\xi \in \FFF$.
	\end{proof}		

\subsection{Methods for Block Models} \label{SSec:Block}
	The Lagrangian-based algorithms to be presented below are designed to tackle the block model as discussed in Example \ref{E:BLCM}:
	\begin{equation*}
		\min_{(u , v) \in \real^{n}} \left\{ f\left(u\right) + g\left(v\right) \, : \, Au + Bv = b \right\},
	\end{equation*}
	which fits the general model (P) where the linear mapping $\AAA$ is defined by $\AAA z = Au + Bv$ and $\Psi\left(z\right) = f\left(u\right) + g\left(v\right)$ where $x = \left(u^{T} , v^{T}\right)^{T} \in \real^{p} \times \real^{q}$. In this case, it is sufficient to require $\sigma$-strong convexity of only one function $f$ or $g$, \ie only in one of the $2$ blocks. Thus, without loss of generality, throughout the rest of this section, the $\sigma$-strong convexity is assumed on the function $g$.
\medskip

	 The notion of \nice primal algorithmic map is flexible and easily adapt to the block setting as stated below.
	\begin{definition}[Nice primal algorithmic map - Block version] \label{D:AlgoMapNiceB}
		Given the parameters $\rho , t > 0$, we let $\tau_{t} = 1$ and $\rho_{t} = \rho$ (when $\sigma = 0$) or $\tau_{t} = t$ and $\rho_{t} = \rho t$ (when $\sigma > 0$). A primal algorithmic map $\SSS_{t} : \real^{n} \times \real^{m} \rightarrow \real^{n}$, which is applied on the augmented Lagrangian $\Lag_{\rho_{t}}\left(z , \lambda\right)$, that generates $z^{+} = \left(u^{+} , v^{+}\right)$ via $z^{+} \in \SSS_{t}\left(z , \lambda\right)$, is called {\em \nice}\hspace{-0.05in}, if there exist a parameter $\delta \in \left(0 , 1\right]$ and matrices $P_{1} , Q_{1} \in \bbS_{+}^{p}$ and $P_{2} , Q_{2} \in \bbS_{+}^{q}$ with $P = \left(P_{1} , P_{2}\right)$ and $Q = \left(Q_{1} , Q_{2}\right)$, such that for any $\left(\xi_{1} , \xi_{2}\right) \in \FFF$ we have
		\begin{align*}
			\Lag_{\rho_{t}}\left(z^{+} , \lambda\right) - \Lag_{\rho_{t}}\left(\xi , \lambda\right) & \leq \frac{1}{t}\Delta_{P_{1}}\left(\xi_{1} , u , u^{+}\right) - \frac{1}{2t}\norm{u^{+} - u}_{Q_{1}}^{2} + \tau_{t}\Delta_{P_{2}}\left(\xi_{2} , v , v^{+}\right) -  \frac{\tau_{t}}{2}\norm{v^{+} - v}_{Q_{2}}^{2} \\
			&  - \frac{\sigma}{2}\norm{\xi_{2} - v^{+}}^{2} - \frac{\delta\rho_{t}}{2}\norm{\AAA z^{+} - b}^{2}.
		\end{align*}				
	\end{definition}	
	The adaptation made in this variant is very simple and appears only in the terms that concern with the parameter $\tau_{t}$. Indeed, recall that the choice of $\tau_{t}$ depends on the convexity or $\sigma$-strong convexity of the given objective function $\Psi\left(\cdot\right)$ through $\sigma$. Therefore, with respect to the block $v$ (which correspond to the possibly strongly convex part) we use $\tau_{t}$ (to include both the convex and strongly convex cases). 
\medskip
	
	Furthermore, equipped with Definition \ref{D:AlgoMapNiceB}, it is then straightforward to adapt the two main pillars of the analysis given in Lemmas \ref{L:FGLIter} and \ref{L:FGLMain}, to the block model. For completeness, and for the readers convenience we  record below the two corresponding key lemmas and we omit their proofs since they are essentially identical to the original ones given in Section \ref{SSec:Pillar}.
	\begin{lemma} \label{L:FGLIter-B}
		Let $\left\{ \left(x^{k} , z^{k} , y^{k}\right) \right\}_{k \in \nn}$ be a sequence generated by FLAG. Then, for any $\xi \in \FFF$, $\eta \in \real^{m}$ and $k \geq 0$, we have
		\begin{align}
			\Lag_{\rho_{k}}\left(z^{k + 1} , \eta\right) - \Lag_{\rho_{k}}\left(\xi , \eta\right) & \leq \frac{1}{t_{k}}\Delta_{P_{1}}\left(\xi_{1} , u^{k} , u^{k + 1}\right) + \tau_{k}\Delta_{P_{2}}\left(\xi_{2} , v^{k} , v^{k + 1}\right) - \frac{\sigma}{2}\norm{\xi_{2} - v^{k + 1}}^{2} \nonumber \\
			& + \frac{1}{\mu\rho_{k}}\Delta\left(\eta , y^{k} , y^{k + 1}\right) - \rho t_{k - 1}^{p}\act{\AAA x^{k} - b , \AAA z^{k + 1} - b}. \label{L:FGLIter-B:0}
		\end{align}		
	\end{lemma}
	\begin{lemma} \label{L:FGLMain - B}
		Let $\left\{ \left(x^{k} , z^{k} , y^{k}\right) \right\}_{k \in \nn}$ be a sequence generated by FLAG. Then, for any $\xi \in \FFF$, $\eta \in \real^{m}$ and $k \geq 0$, we have
		\begin{equation} \label{L:FGLMain - B:0}
			t_{k}^{p}s_{k + 1} - t_{k - 1}^{p}s_{k} \leq \frac{\rho_{k}}{t_{k}\rho}\Delta_{P_{1}}\left(\xi_{1} , u^{k} , u^{k + 1}\right) + \frac{\tau_{k}\rho_{k}}{\rho}\Delta_{P_{2}}\left(\xi_{2} , v^{k} , v^{k + 1}\right) - \frac{\rho_{k}\sigma}{2\rho}\norm{\xi_{2} - v^{k + 1}}^{2} + \frac{1}{\mu\rho}\Delta\left(\eta , y^{k} , y^{k + 1}\right),
		\end{equation}
		where $s_{k} = \Lag_{\rho t_{k - 1}^{p}}\left(x^{k} , \eta\right) - \Lag_{\rho t_{k - 1}^{p}}\left(\xi , \eta\right)$ (\cf \ref{laggap}).
	\end{lemma}	
	Equipped with these two lemmas, all our results of Sections \ref{SSec:NonErgodic} and \ref{SSec:Ergodic} can be similarly proved, and therefore all our results applied also for the block model which we briefly summarize as follows:
	\begin{itemize}
		\item If {\em one} of the functions $f$ or $g$ is strongly convex (here, we have arbitrarly chosen $g$) a fast non-ergodic rate can be obtained as in Theorem \ref{T:FGLrate}, and a fast ergodic rate can be obtained as in Corollary \ref{C:FGLrate}.
		\item If {\em none} of the functions $f$ and $g$ is strongly convex, then only classical rate of convergence results can be obtained, namely  a non-ergodic rate as in Theorem \ref{T:GLrate} and an ergodic rate as in Corollary \ref{C:GLrate}.
	\end{itemize}

\subsubsection{Example III: Alternating Direction Method of Multipliers (ADMM)} \label{SSSec:ADMM}
	Thanks to the split nature of the block model it will be worth to exploit this structure by using an Alternating Minimization-based algorithm. The ADMM \cite{GM75, GM1976, FG1983-B} can be obtained from the Augmented Lagrangian method (see Section \ref{SSec:AL}) by applying the Alternating Minimization idea on the primal updating step, which consists of minimizing the augmented Lagrangian jointly with respect to the primal variables $\left(u , v\right)$. In order to use FLAG in this case, all we need is to verify that the corresponding algorithmic map $\left(u^{+} , v^{+}\right) \equiv z^{+} \in \SSS_{t}\left(z , \lambda\right)$ is \nice according to Definition \ref{D:AlgoMapNiceB}, as recorded next without a proof;  instead see the proof of the Linearized ADMM below.
	\begin{lemma}[Proximal ADMM is nice] \label{L:NicePADM}
		Let $M_{1} , M_{2} \succeq 0$, the primal algorithmic map $\SSS_{t}\left(\cdot\right)$ defined by
		\begin{align*}
			u^{+} & = \argmin_{\xi_{1}} \left\{ f\left(\xi_{1}\right) + \act{\lambda , A\xi_{1} + Bv - b} + \frac{\rho_{t}}{2}\norm{A\xi_{1} + Bv - b}^{2} + \frac{1}{2t}\norm{\xi_{1} - u}_{M_{1}}^{2} \right\}, \\ 
			v^{+} & = \argmin_{\xi_{2}} \left\{ g\left(\xi_{2}\right) + \act{\lambda , Au^{+} + B\xi_{2} - b} + \frac{\rho_{t}}{2}\norm{Au^{+} + B\xi_{2} - b}^{2} + \frac{\tau_{t}}{2}\norm{\xi_{2} - v}_{M_{2}}^{2} \right\}, 
		\end{align*}				
		is nice with $\delta = 1 - \rho\lambda_{max}(B^{T}B)/\left(\rho\lambda_{max}(B^{T}B) + \lambda_{min}(M_{2})\right)$, $P_{1} = M_{1}$, $P_{2} = M_{2} + \rho B^{T}B$, $Q_{1} = M_{1}$ and $Q_{2} = 0$.
	\end{lemma}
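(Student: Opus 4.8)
The plan is to apply the Composite proximal inequality (Lemma~\ref{L:ProxIneqLag}) to each of the two block minimizations separately and then combine the two estimates. For the $u$-update I would invoke Lemma~\ref{L:ProxIneqLag} with $\varphi = f$, smooth part $c\left(\xi_{1}\right) = \act{\lambda , A\xi_{1} + Bv - b} + \frac{\rho_{t}}{2}\norm{A\xi_{1} + Bv - b}^{2}$, and weight $W = M_{1}$; since strong convexity is assumed only on $g$, the modulus entering this first estimate is $0$, yielding $\Delta_{M_{1}}\left(\xi_{1} , u , u^{+}\right) - \frac{1}{2}\norm{u^{+} - u}_{M_{1}}^{2}$ on the right (so $P_{1} = Q_{1} = M_{1}$ will emerge automatically). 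For the $v$-update I would use $\varphi = g$ ($\sigma$-strongly convex), $c\left(\xi_{2}\right) = \act{\lambda , Au^{+} + B\xi_{2} - b} + \frac{\rho_{t}}{2}\norm{Au^{+} + B\xi_{2} - b}^{2}$, and $W = \tau_{t}M_{2}$, which supplies the term $-\frac{\sigma}{2}\norm{\xi_{2} - v^{+}}^{2}$. Computing $\nabla c$ at $u^{+}$ and $v^{+}$ respectively and adding the two inequalities bounds $f\left(u^{+}\right) - f\left(\xi_{1}\right) + g\left(v^{+}\right) - g\left(\xi_{2}\right)$ by $\Delta_{M_{1}}$, $\tau_{t}\Delta_{M_{2}}$, the two negative proximal terms, $-\frac{\sigma}{2}\norm{\xi_{2}-v^{+}}^{2}$, and a collection of cross terms of the form $\act{\lambda , \cdot}$ and $\rho_{t}\act{\cdot , \cdot}$.

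The second step is to compare this with $\Lag_{\rho_{t}}\left(z^{+} , \lambda\right) - \Lag_{\rho_{t}}\left(\xi , \lambda\right)$. Using $\xi = \left(\xi_{1} , \xi_{2}\right) \in \FFF$, so that $A\xi_{1} + B\xi_{2} = b$, the linear multiplier contribution reduces to the residual $r^{+} := \AAA z^{+} - b$ via $A\left(u^{+} - \xi_{1}\right) + B\left(v^{+} - \xi_{2}\right) = r^{+}$, and the term $\act{\lambda , r^{+}}$ cancels exactly against the one in the augmented Lagrangian difference. What remains is the quadratic block $\frac{\rho_{t}}{2}\norm{r^{+}}^{2} - \rho_{t}\act{Au^{+} + Bv - b , A\left(u^{+} - \xi_{1}\right)} - \rho_{t}\act{r^{+} , B\left(v^{+} - \xi_{2}\right)}$. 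I would substitute $A\left(u^{+} - \xi_{1}\right) = r^{+} - B\left(v^{+} - \xi_{2}\right)$ and $Au^{+} + Bv - b = r^{+} - B\left(v^{+} - v\right)$ (both using feasibility of $\xi$), expand, and observe that the terms $\pm\rho_{t}\act{r^{+} , B\left(v^{+} - \xi_{2}\right)}$ cancel. One application of the Pythagoras identity \eqref{Pyth} with $P = B^{T}B$ then turns $-\rho_{t}\act{B\left(v^{+} - v\right) , B\left(v^{+} - \xi_{2}\right)}$ into $\rho_{t}\Delta_{B^{T}B}\left(\xi_{2} , v , v^{+}\right) - \frac{\rho_{t}}{2}\norm{v^{+} - v}_{B^{T}B}^{2}$. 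Since $\rho_{t} = \rho\tau_{t}$, the piece $\rho_{t}\Delta_{B^{T}B} = \tau_{t}\Delta_{\rho B^{T}B}$ merges with $\tau_{t}\Delta_{M_{2}}$ into exactly $\tau_{t}\Delta_{P_{2}}$ with $P_{2} = M_{2} + \rho B^{T}B$, confirming the claimed $P_{2}$.

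The $u$-block now matches the nice inequality (Definition~\ref{D:AlgoMapNiceB}) with $P_{1} = Q_{1} = M_{1}$, and the only leftover to control is $J := -\frac{\tau_{t}}{2}\norm{v^{+} - v}_{M_{2}}^{2} - \frac{\rho_{t}}{2}\norm{r^{+}}^{2} + \rho_{t}\act{B\left(v^{+} - v\right) , r^{+}} - \frac{\rho_{t}}{2}\norm{v^{+} - v}_{B^{T}B}^{2}$, which must be shown to satisfy $J \leq -\frac{\delta\rho_{t}}{2}\norm{r^{+}}^{2}$ (recall $Q_{2} = 0$). The decisive step is to bound the single cross term by Young's inequality, $\rho_{t}\act{Bd , r^{+}} \leq \frac{\rho_{t}}{2c}\norm{Bd}^{2} + \frac{\rho_{t}c}{2}\norm{r^{+}}^{2}$ with $d = v^{+} - v$, and to calibrate $c > 0$. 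The $\norm{r^{+}}^{2}$ terms require $c \leq 1 - \delta$, while, using $\norm{Bd}^{2} = \norm{d}_{B^{T}B}^{2} \leq \lambda_{\max}\left(B^{T}B\right)\norm{d}^{2}$ together with $\norm{d}_{M_{2}}^{2} \geq \lambda_{\min}\left(M_{2}\right)\norm{d}^{2}$, absorbing the remaining $\norm{Bd}^{2}$ into the $M_{2}$ and $B^{T}B$ terms requires $c \geq \rho\lambda_{\max}\left(B^{T}B\right)/\left(\rho\lambda_{\max}\left(B^{T}B\right) + \lambda_{\min}\left(M_{2}\right)\right)$. I expect this calibration to be the main obstacle: the two bounds on $c$ coincide at precisely $c = 1 - \delta = \rho\lambda_{\max}\left(B^{T}B\right)/\left(\rho\lambda_{\max}\left(B^{T}B\right) + \lambda_{\min}\left(M_{2}\right)\right)$, which is exactly what forces the stated value of $\delta$ and makes both balances tight simultaneously, delivering $J \leq -\frac{\delta\rho_{t}}{2}\norm{r^{+}}^{2}$ and hence the nice property.
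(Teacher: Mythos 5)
Your proposal is correct, and it is essentially the argument the paper intends: the paper records this lemma without proof and points to the proof of Lemma \ref{L:NiceLADM}, whose template (two applications of Lemma \ref{L:ProxIneqLag} with $\varphi=f$, $W=M_1$ and $\varphi=g$, $W=\tau_t M_2$, summation using $A\xi_1+B\xi_2=b$, then Young's inequality calibrated at $c=1-\delta$) is exactly what you follow, with the modifications needed for the exact rather than linearized $v$-step. Those modifications check out in detail: the substitutions $A\left(u^{+}-\xi_{1}\right)=r^{+}-B\left(v^{+}-\xi_{2}\right)$ and $Au^{+}+Bv-b=r^{+}-B\left(v^{+}-v\right)$, the cancellation of $\pm\rho_{t}\act{r^{+},B\left(v^{+}-\xi_{2}\right)}$, and the Pythagoras identity in $B^{T}B$ correctly produce the merged term $\tau_{t}\Delta_{M_{2}+\rho B^{T}B}$ (confirming $P_{2}$) together with the extra $-\frac{\rho_{t}}{2}\norm{v^{+}-v}_{B^{T}B}^{2}$, which is precisely what enlarges the denominator in the admissible $c$ to $\rho\lambda_{max}(B^{T}B)+\lambda_{min}(M_{2})$ and yields the stated $\delta$ without the condition $\lambda_{min}(M_{2})>\rho\lambda_{max}(B^{T}B)$ required in the linearized case.
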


\subsubsection{Example IV: Proximal Linearized Alternating Direction Method of Multipliers} \label{SSSec:LADMM}
	Now, we would like to show that the Proximal Linearized ADMM, which consists of a linearization of the augmented term around the current iteration, also admits a \nice primal algorithmic map as recorded next. It should be noted that here we consider only partial linearization (with respect to the second block $v$), since only this block corresponds to a strongly convex function. Clearly, if we additionally assume that also $f$ is strongly convex then the primal algorithmic map corresponds to the fully Proximal Linearized ADMM, which can be easily proven to be nice in a similar way, hence we omit the details.
	\begin{lemma}[Proximal Linearized ADMM is nice] \label{L:NiceLADM}
		Let $M_{1} , M_{2} \succeq 0$ with $\lambda_{min}(M_{2}) > \rho\lambda_{max}(B^{T}B)$, the primal algorithmic map $\SSS_{t}\left(\cdot\right)$ defined by
		\begin{align}
			u^{+} & = \argmin_{\xi_{1}} \left\{ f\left(\xi_{1}\right) + \act{\lambda , A\xi_{1} + Bv - b} + \frac{\rho_{t}}{2}\norm{A\xi_{1} + Bv - b}^{2} + \frac{1}{2t}\norm{\xi_{1} - u}_{M_{1}}^{2} \right\}, \label{FLADM:Prim1} \\
			v^{+} & = \argmin_{\xi_{2}} \left\{ g\left(\xi_{2}\right) + \act{\lambda , Au^{+} + B\xi_{2} - b} + \rho_{t}\act{Au^{+} + Bv - b , B\xi_{2}} + \frac{\tau_{t}}{2}\norm{\xi_{2} - v}_{M_{2}}^{2} \right\},  \label{FLADM:Prim2}
		\end{align}				
		is nice with $\delta =  1 - \rho\lambda_{max}(B^{T}B)/\lambda_{min}(M_{2})$, $P_{1} = Q_{1} = M_{1}$, $P_{2} = M_{2}$  and $Q_{2} = 0$.
	\end{lemma}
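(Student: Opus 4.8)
The plan is to mirror the proof of Lemma \ref{L:NiceLAL}, but now the primal update splits into two sequential steps, so I would apply the composite proximal inequality (Lemma \ref{L:ProxIneqLag}) \emph{separately} to each block and then add the two resulting estimates. First, for the $u$-step \eqref{FLADM:Prim1} I would invoke Lemma \ref{L:ProxIneqLag} with $\varphi = f$ (convex only, so $\sigma = 0$ for this block), smooth part $c(\xi_{1}) = \act{\lambda , A\xi_{1} + Bv - b} + \frac{\rho_{t}}{2}\norm{A\xi_{1} + Bv - b}^{2}$, and weight $W = M_{1}$; since $\nabla c(u^{+}) = A^{T}\lambda + \rho_{t}A^{T}(Au^{+} + Bv - b)$, this gives
\begin{equation*}
	f(u^{+}) - f(\xi_{1}) + \act{\lambda , Au^{+} - A\xi_{1}} + \rho_{t}\act{Au^{+} + Bv - b , Au^{+} - A\xi_{1}} \leq \Delta_{M_{1}}(\xi_{1} , u , u^{+}) - \frac{1}{2}\norm{u^{+} - u}_{M_{1}}^{2}.
\end{equation*}
Next, for the (partially linearized) $v$-step \eqref{FLADM:Prim2} I would apply the lemma with $\varphi = g$ ($\sigma$-strongly convex), smooth part $c(\xi_{2}) = \act{\lambda , Au^{+} + B\xi_{2} - b} + \rho_{t}\act{Au^{+} + Bv - b , B\xi_{2}}$, and weight $W = \tau_{t}M_{2}$; here $\nabla c(v^{+}) = B^{T}\lambda + \rho_{t}B^{T}(Au^{+} + Bv - b)$, yielding
\begin{equation*}
	g(v^{+}) - g(\xi_{2}) + \act{\lambda , Bv^{+} - B\xi_{2}} + \rho_{t}\act{Au^{+} + Bv - b , Bv^{+} - B\xi_{2}} \leq \tau_{t}\Delta_{M_{2}}(\xi_{2} , v , v^{+}) - \frac{\tau_{t}}{2}\norm{v^{+} - v}_{M_{2}}^{2} - \frac{\sigma}{2}\norm{\xi_{2} - v^{+}}^{2}.
\end{equation*}

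I would then add these two inequalities. The simplification exploits $\xi = (\xi_{1} , \xi_{2}) \in \FFF$, i.e. $A\xi_{1} + B\xi_{2} = b$: the two linear multiplier terms merge into $\act{\lambda , \AAA z^{+} - b}$, and, crucially, the two augmented cross terms both carry the \emph{same} factor $Au^{+} + Bv - b$ (because the $u$-step uses the old $v$ and the $v$-step is linearized around $v$), so they merge into $\rho_{t}\act{Au^{+} + Bv - b , \AAA z^{+} - b}$. Writing $Au^{+} + Bv - b = (\AAA z^{+} - b) - B(v^{+} - v)$ expands this into $\rho_{t}\norm{\AAA z^{+} - b}^{2} - \rho_{t}\act{B(v^{+} - v) , \AAA z^{+} - b}$. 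Peeling off one half, $\frac{\rho_{t}}{2}\norm{\AAA z^{+} - b}^{2}$, together with $f(u^{+}) + g(v^{+}) + \act{\lambda , \AAA z^{+} - b}$ reconstitutes precisely $\Lag_{\rho_{t}}(z^{+} , \lambda) - \Lag_{\rho_{t}}(\xi , \lambda)$ (recall $\Lag_{\rho_{t}}(\xi , \lambda) = \Psi(\xi)$ for $\xi \in \FFF$), and what survives on the right is $-\frac{\rho_{t}}{2}\norm{\AAA z^{+} - b}^{2} + \rho_{t}\act{B(v^{+} - v) , \AAA z^{+} - b}$ plus the two $\Delta$-blocks.

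The hard part will be disposing of the residual cross term \emph{while still retaining a negative multiple} of $\norm{\AAA z^{+} - b}^{2}$, which is the feasibility-violation term required by Definition \ref{D:AlgoMapNiceB}. I would handle it by a weighted Young's inequality $\rho_{t}\act{B(v^{+} - v) , \AAA z^{+} - b} \leq \frac{\rho_{t}}{2\epsilon}\norm{B(v^{+} - v)}^{2} + \frac{\rho_{t}\epsilon}{2}\norm{\AAA z^{+} - b}^{2}$ with the sharp choice $\epsilon = \rho\lambda_{max}(B^{T}B)/\lambda_{min}(M_{2})$. The $\norm{\AAA z^{+} - b}^{2}$ terms then collapse to $-\frac{\rho_{t}(1 - \epsilon)}{2}\norm{\AAA z^{+} - b}^{2} = -\frac{\delta\rho_{t}}{2}\norm{\AAA z^{+} - b}^{2}$, which pins down $\delta = 1 - \epsilon$ as claimed, with $\delta \in (0 , 1]$ guaranteed exactly by the standing hypothesis $\lambda_{min}(M_{2}) > \rho\lambda_{max}(B^{T}B)$. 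It then remains to absorb $\frac{\rho_{t}}{2\epsilon}\norm{B(v^{+} - v)}^{2}$: using $\rho_{t} = \rho\tau_{t}$, the bound $\norm{B(v^{+} - v)}^{2} \leq \lambda_{max}(B^{T}B)\norm{v^{+} - v}^{2}$, and the value of $\epsilon$, this quantity is at most $\frac{\tau_{t}\lambda_{min}(M_{2})}{2}\norm{v^{+} - v}^{2} \leq \frac{\tau_{t}}{2}\norm{v^{+} - v}_{M_{2}}^{2}$, so it is cancelled by the $-\frac{\tau_{t}}{2}\norm{v^{+} - v}_{M_{2}}^{2}$ produced by the $v$-block. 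This cancellation is what forces $Q_{2} = 0$, while the $u$-block leaves $\Delta_{M_{1}}(\xi_{1} , u , u^{+}) - \frac{1}{2}\norm{u^{+} - u}_{M_{1}}^{2}$ intact, giving $P_{1} = Q_{1} = M_{1}$ and $P_{2} = M_{2}$. Collecting all terms reproduces the inequality of Definition \ref{D:AlgoMapNiceB} with the stated parameters, completing the proof.
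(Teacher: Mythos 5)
Your proposal is correct and takes essentially the same route as the paper's own proof: a blockwise application of Lemma \ref{L:ProxIneqLag} (with $W = M_{1}$ and $W = \tau_{t}M_{2}$), summation using feasibility $A\xi_{1} + B\xi_{2} = b$, the splitting $Au^{+} + Bv - b = \left(\AAA z^{+} - b\right) - B\left(v^{+} - v\right)$, and Young's inequality with exactly the paper's optimal weight $\alpha = \rho\lambda_{max}(B^{T}B)/\lambda_{min}(M_{2})$, after which the $-\frac{\tau_{t}}{2}\norm{v^{+} - v}_{M_{2}}^{2}$ term absorbs the residual (forcing $Q_{2} = 0$) and yields $\delta = 1 - \alpha$ with the stated $P_{1} , Q_{1} , P_{2}$. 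No gaps.
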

	\begin{proof}
		Applying Lemma \ref{L:ProxIneqLag} with $\varphi\left(\xi\right) = f\left(\xi_{1}\right)$, $c\left(\xi_{1}\right) = \act{\lambda , A\xi_{1}} + \left(\rho_{t}/2\right)\norm{A\xi_{1} + Bv- b}^{2}$ and $W = M_{1}/t$ yields for any $\xi_{1} \in \real^{p}$ that
		\begin{equation*}
			f\left(u^{+}\right) - f\left(\xi_{1}\right) + \act{\lambda + \rho_{t}\left(Au^{+} + Bv - b\right) , Au^{+} - A\xi_{1}} \leq \text{RHS}_{1},
		\end{equation*}
		where
		\begin{equation*}
			\text{RHS}_{1} = \frac{1}{t}\Delta_{M_{1}}\left(\xi_{1} , u , u^{+}\right) - \frac{1}{2t}\norm{u^{+} - u}_{M_{1}}^{2}.
		\end{equation*}
		Similarly, by applying Lemma \ref{L:ProxIneqLag} with $\varphi\left(\xi_{2}\right) = g\left(\xi_{2}\right)$, $c\left(\xi_{2}\right) = \act{\lambda , B\xi_{2}} + \rho_{t}\act{Au^{+} + Bv - b , B\xi_{2}}$ and $W = \tau_{t}M_{2}$ yields for any $\xi_{2} \in \real^{q}$ that
		\begin{equation*}
			g\left(v^{+}\right) - g\left(\xi_{2}\right) + \act{\lambda + \rho_{t}\left(Au^{+} + Bv - b\right) , Bv^{+} - B\xi_{2}} \leq \text{RHS}_{2},
		\end{equation*}
		where
		\begin{equation*}
			\text{RHS}_{2} = \tau_{t}\Delta_{M_{2}}\left(\xi_{2} , v , v^{+}\right) - \frac{\tau_{t}}{2}\norm{v^{+} - v}_{M_{2}}^{2} - \frac{\sigma}{2}\norm{\xi_{2} - v^{+}}^{2}.
		\end{equation*}
		By adding these two inequalities and using the fact that $\xi := \left(\xi_{1} , \xi_{2}\right) \in \FFF$, that is $A\xi_{1} + B\xi_{2} = b$, we get
		\begin{align}
			\Psi\left(z^{+}\right) - \Psi\left(\xi\right) + \act{\lambda , \AAA z^{+} - b} + \rho_{t}\act{Au^{+} + Bv - b , \AAA z^{+} - b} \leq \text{RHS}_{1} + \text{RHS}_{2}, \label{L:NiceLADM:1}
		\end{align}
		where we used that $Au^{+} + Bv^{+} = \AAA z^{+}$ and $z = \left(u , v\right)$, $\xi =(\xi_1,\xi_2)$, $z^{+} = \left(u^{+} , v^{+}\right)$. In addition, we have		
		\begin{align}
			\act{Au^{+} + Bv - b , \AAA z^{+} - b} & = \act{Au^{+} + Bv^{+} - b , \AAA z^{+} - b} + \act{Bv - Bv^{+} , \AAA z^{+} - b} \nonumber \\
			& = \norm{\AAA z^{+} - b}^{2} + \act{Bv - Bv^{+} , \AAA z^{+} - b}  \nonumber \\
			& \geq \norm{\AAA z^{+} - b}^{2} - \frac{1}{2\alpha}\norm{Bv^{+} - Bv}^{2} - \frac{\alpha}{2}\norm{\AAA z^{+} - b}^{2}, \label{L:NiceLADM:2}
		\end{align}
		where we used again that $Au^{+} + Bv^{+} = \AAA z^{+}$, and the simple fact:
		\begin{equation}\label{basic}
		\act{c , d} \geq - \frac{1}{2\alpha} \norm{c}^{2} -  \frac{\alpha}{2} \norm{d}^{2}, \quad \,\, \forall \, c , d \in \real^{n}, \,\, \forall \, \alpha > 0.
		\end{equation}
		By combining \eqref{L:NiceLADM:1} with $\rho_{t}$ times \eqref{L:NiceLADM:2}, we obtain, for all $\alpha < 1$, that
		\begin{align*}
			\Psi\left(z^{+}\right) - \Psi\left(\xi\right) + \act{\lambda , \AAA z^{+} - b}+ \frac{\rho_{t}}{2}\norm{\AAA z^{+} - b}^{2} & \leq \text{RHS}_{1} + \text{RHS}_{2} + \frac{\rho_{t}}{2\alpha}\norm{Bv^{+} - Bv}^{2} - \frac{\delta\rho_{t}}{2}\norm{\AAA z^{+} - b}^{2},
		\end{align*}
		where $\delta = 1 - \alpha > 0$. In order to determine the best parameter $\alpha < 1$ we consider the following terms of the previous inequality (using the definition of $\text{RHS}_{2}$ and omitting the multiplication by $\rho_{t}$):
		\begin{equation*}
			\frac{1}{\alpha}\norm{v^{+} - v}_{B^{T}B}^{2} - \frac{1}{\rho}\norm{v^{+} - v}_{M_{2}}^{2} \leq \left(\frac{1}{\alpha}\lambda_{max}(B^{T}B) - \frac{1}{\rho}\lambda_{min}(M_{2})\right)\norm{v^{+} - v}^{2}.
		\end{equation*}
		Therefore, by taking $\alpha = \rho\lambda_{max}(B^{T}B)/\lambda_{min}(M_{2})$, the desired result is proved.
	\end{proof}		
	\begin{remark} \label{R-zouch}
		As discussed in the introduction, we would like to mention again \cite{LL2019}, where a non-ergodic $O(1/N)$ rate of convergence result for the Linearized ADMM is proven. It should be noted that their proposed algorithm appears very different from our scheme. In the convex case (\ie when $p = 1$), FLAG with the primal algorithmic map $\SSS_{t}\left(\cdot\right)$ chosen to be the Linearized ADMM, produces a scheme sharing the non-ergodic $O(1/N)$ rate of convergence (see Theorem \ref{T:GLrate}).
	\end{remark}
	
\subsubsection{Example V: Chambolle-Pock Method} \label{SSSec:CP}
	The Chambolle-Pock (CP) method \cite{CP2011} is an example of a well-known Lagrangian-based method that was designed to tackle the linear composite model (\cf Example \ref{E:LCM}), which can be seen as a particular case of the block model discuss above with $A = I_{p}$ (where $I_{p}$ is the $p \times p$ identity matrix), \ie
	\begin{equation*}
		\min_{(u , v) \in \real^{n}} \left\{ f\left(u\right) + g\left(v\right) \, : \, u + Bv = b \right\}.
	\end{equation*}
	Following \cite[Section 3.3.]{ST2014}, the CP Method can be seen as special case of the Proximal Linearized ADMM. More precisely, the primal algorithmic map $\SSS_{t}$ that defines the CP method is given by
	\begin{align}
			u^{+} & = \argmin_{\xi_{1}} \left\{ f\left(\xi_{1}\right) + \act{\lambda , \xi_{1} + Bv - b} + \frac{\rho_{t}}{2}\norm{\xi_{1} + Bv - b}^{2} \right\}, \label{FCP:Prim1} \\
			v^{+} & = \argmin_{\xi_{2}} \left\{ g\left(\xi_{2}\right) + \act{\lambda , u^{+} + B\xi_{2} - b} + \rho_{t}\act{u^{+} + Bv - b , B\xi_{2}} + \frac{\tau_{t}}{2\alpha}\norm{\xi_{2} - v}^{2} \right\}, \label{FCP:Prim2}
	\end{align}			
	where $\alpha > 0$ is a proximal parameter (\ie in this case $M_{1} \equiv 0$ and $M_{2} = \left(1/\alpha\right)I_{q}$). Therefore, we immediately obtain from Lemma \ref{L:NiceLADM} that CP also admits a \nice primal algorithmic map as recorded below.
	\begin{lemma}[CP is nice] \label{L:NiceCP}
		Let $\alpha^{-1} > \rho\lambda_{max}(B^{T}B)$, the primal algorithmic map $\SSS_{t}\left(\cdot\right)$ defined by \eqref{FCP:Prim1} and \eqref{FCP:Prim2} is nice with $\delta =  1 - \rho\alpha\lambda_{max}(B^{T}B)$, $P_{1} = Q_{1} = 0$, $P_{2} = \left(1/\alpha\right)I_{q}$  and $Q_{2} = 0$.
	\end{lemma}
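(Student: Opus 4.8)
The plan is to obtain the result as an immediate specialization of Lemma \ref{L:NiceLADM}, exploiting the fact — already noted above following \cite[Section 3.3.]{ST2014} — that the CP method is nothing but the Proximal Linearized ADMM applied to the linear composite model, in which the block constraint $Au + Bv = b$ reduces to $u + Bv = b$, i.e.\ $A = I_p$. First I would carry out the identification of the primal algorithmic maps term by term. In the update \eqref{FCP:Prim1} of $u^{+}$, setting $A = I_p$ in \eqref{FLADM:Prim1} reproduces exactly the CP first step once the proximal weight is chosen as $M_1 = 0$ (CP carries no proximal term on the $u$-block, which is admissible since with $A = I_p$ the penalty $\tfrac{\rho_t}{2}\norm{\xi_1 + Bv - b}^2$ is already strongly convex in $\xi_1$). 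For the update \eqref{FCP:Prim2} of $v^{+}$, setting $A = I_p$ in \eqref{FLADM:Prim2} reproduces the CP second step provided $M_2 = (1/\alpha)I_q$, because then the semi-norm prox term collapses to the scalar one, $\tfrac{\tau_t}{2}\norm{\xi_2 - v}_{M_2}^2 = \tfrac{\tau_t}{2\alpha}\norm{\xi_2 - v}^2$.

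Having matched the maps, the next step is to check that the hypotheses of Lemma \ref{L:NiceLADM} are met under the specialization $(A, M_1, M_2) = (I_p, 0, (1/\alpha)I_q)$. The requirement $M_1, M_2 \succeq 0$ holds, since $M_1 = 0 \succeq 0$ and, because $\alpha > 0$, $M_2 = (1/\alpha)I_q \succ 0$; moreover $\lambda_{\min}(M_2) = \alpha^{-1}$, so the spectral condition $\lambda_{\min}(M_2) > \rho\lambda_{\max}(B^TB)$ of Lemma \ref{L:NiceLADM} is precisely the standing assumption $\alpha^{-1} > \rho\lambda_{\max}(B^TB)$ of the present statement.

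It then remains only to read off the conclusion of Lemma \ref{L:NiceLADM}, which yields niceness with $P_1 = Q_1 = M_1 = 0$, $P_2 = M_2 = (1/\alpha)I_q$, $Q_2 = 0$, and
\[
\delta = 1 - \frac{\rho\lambda_{\max}(B^TB)}{\lambda_{\min}(M_2)} = 1 - \rho\alpha\lambda_{\max}(B^TB),
\]
which is exactly the asserted parameter, lying in $\left(0 , 1\right]$ thanks to the spectral condition. I do not anticipate any genuine obstacle here: the whole argument is bookkeeping that pins down the correct specialization of the general block lemma. The only points deserving a moment of care are the translation of the scalar prox weight $1/\alpha$ into the matrix semi-norm $\norm{\cdot}_{M_2}$ used in Lemma \ref{L:NiceLADM}, and the resulting identity $\lambda_{\min}((1/\alpha)I_q) = \alpha^{-1}$, which is what converts the matrix spectral hypothesis into the stated scalar condition on $\alpha$ and the stated value of $\delta$.
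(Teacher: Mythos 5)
Your proposal is correct and follows essentially the same route as the paper, which likewise obtains Lemma \ref{L:NiceCP} as an immediate specialization of Lemma \ref{L:NiceLADM} with $A = I_{p}$, $M_{1} = 0$ and $M_{2} = \left(1/\alpha\right)I_{q}$, so that $\lambda_{\min}(M_{2}) = \alpha^{-1}$ turns the spectral hypothesis into $\alpha^{-1} > \rho\lambda_{\max}(B^{T}B)$ and yields $\delta = 1 - \rho\alpha\lambda_{\max}(B^{T}B)$. Your additional bookkeeping (matching the prox terms and noting $M_{1} = 0 \succeq 0$ is admissible) is exactly the verification the paper leaves implicit.
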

	\begin{remark} \label{R:CP1}
		\begin{itemize}
			\item[(a)] Since the primal algorithmic map defined by the CP method is \nice\hspace{-0.05in}, all our results can be applied both in the convex and the strongly convex settings. In the original paper \cite{CP2011}, an ergodic $O(1/N)$ rate was proved, and this can be recovered by our Corollary \ref{C:GLrate}, under the same condition that $\alpha^{-1} > \rho\lambda_{max}(B^{T}B)$.
			\item[(b)] In \cite{CP2016}, a faster ergodic $O(1/N^{2})$ rate was proved in the strongly convex setting. Apparently, it does not seem possible to compare the method proposed there, which also includes other parameters that we do not use here. In any case, here we can use FLAG with the primal iterates as described  in Lemma \ref{L:NiceCP}, and thanks to Corollary \ref{C:FGLrate} with $\sigma\alpha \geq 2$, we obtain a variant of the CP method with the faster ergodic rate of $O(1/N^{2})$.
		\end{itemize}
	\end{remark}
	
\subsubsection{Example VI: Jacobi (Parallel) Direction Method of Multipliers} \label{SSSec:JDMM}
	In continuation of the ADMM presented above, another approach to exploit the block structure is to split them into two parallel steps, which is also known Jacobi Direction Method of Multipliers (JDMM) \cite{HHY2015}. In this case, we have the following result. Note that this algorithm is \nice only when both functions $f$ and $g$ are treated the same, meaning, if they are both strongly convex functions or both are convex functions.
	\begin{lemma}[Proximal Jacobi is nice] \label{L:NiceJDM}
		Let $M_{1} , M_{2} \succeq 0$, the primal algorithmic map $\SSS_{t}\left(\cdot\right)$ defined by
		\begin{align*}
			u^{+} & = \argmin_{\xi_{1}} \left\{ f\left(\xi_{1}\right) + \act{\lambda , A\xi_{1} + Bv - b} + \frac{\rho_{t}}{2}\norm{A\xi_{1} + Bv - b}^{2} + \frac{\tau_{t}}{2}\norm{\xi_{1} - u}_{M_{1}}^{2} \right\}, \\ 
			v^{+} & = \argmin_{\xi_{2}} \left\{ g\left(\xi_{2}\right) + \act{\lambda , Au + B\xi_{2} - b} + \frac{\rho_{t}}{2}\norm{Au + B\xi_{2} - b}^{2} + \frac{\tau_{t}}{2}\norm{\xi_{2} - v}_{M_{2}}^{2} \right\}, 
		\end{align*}				
		is nice with $\delta = 1 - 2\max\{ \alpha_{1} , \alpha_{2} \}$ where $\alpha_{1} = \rho\lambda_{max}(A^{T}A)/\left(\rho\lambda_{max}(A^{T}A) + \lambda_{min}(M_{1})\right)$ and \\		
		$\alpha_{2} = \rho\lambda_{max}(B^{T}B)/\left(\rho\lambda_{max}(B^{T}B) + \lambda_{min}(M_{2})\right)$, $P_{1} = M_{1} + \rho A^{T}A$, $P_{2} = M_{2} + \rho B^{T}B$ and $Q_{1} = Q_{2} = 0$.
	\end{lemma}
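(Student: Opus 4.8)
The plan is to mirror the proof of Lemma \ref{L:NiceLADM}, the one genuine novelty being that the Jacobi (parallel) structure evaluates \emph{each} block's quadratic penalty at the \emph{stale} value of the partner block, so that two correction terms (rather than one, as in ADMM) must be controlled — this is exactly what produces the factor $2$ in $\delta$. Throughout I would work in the symmetric situation isolated in the statement (both blocks convex, or both strongly convex), illustrating with the common weight $\tau_{t}$, which reduces to $\tau_{t}=1$ in the purely convex case.

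First I would apply the composite proximal inequality, Lemma \ref{L:ProxIneqLag}, to each of the two parallel subproblems: for the $u$-update with $\varphi=f$, $c\left(\xi_{1}\right)=\act{\lambda,A\xi_{1}}+\frac{\rho_{t}}{2}\norm{A\xi_{1}+Bv-b}^{2}$ and $W=\tau_{t}M_{1}$; for the $v$-update with $\varphi=g$, $c\left(\xi_{2}\right)=\act{\lambda,B\xi_{2}}+\frac{\rho_{t}}{2}\norm{Au+B\xi_{2}-b}^{2}$ and $W=\tau_{t}M_{2}$. Since the penalties are kept in full (not linearized), each gradient contributes the full residual, yielding the two inner products $\rho_{t}\act{Au^{+}+Bv-b,A(u^{+}-\xi_{1})}$ and $\rho_{t}\act{Au+Bv^{+}-b,B(v^{+}-\xi_{2})}$. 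Adding the two inequalities and using $\xi=\left(\xi_{1},\xi_{2}\right)\in\FFF$, so that $A\xi_{1}+B\xi_{2}=b$, collapses the linear parts into $\act{\lambda,\AAA z^{+}-b}$.

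The heart of the argument is the two quadratic inner products. Here I would write $Au^{+}+Bv-b=(\AAA z^{+}-b)-B(v^{+}-v)$ and $Au+Bv^{+}-b=(\AAA z^{+}-b)-A(u^{+}-u)$: the true-residual parts sum to $\rho_{t}\norm{\AAA z^{+}-b}^{2}$, recovering the augmented term (half of which is moved over to form $\Lag_{\rho_{t}}(z^{+},\lambda)$), while the staleness parts are $-\rho_{t}\act{B(v^{+}-v),A(u^{+}-\xi_{1})}$ and $-\rho_{t}\act{A(u^{+}-u),B(v^{+}-\xi_{2})}$. Substituting $A(u^{+}-\xi_{1})=(\AAA z^{+}-b)-B(v^{+}-\xi_{2})$ (and symmetrically for the other), each staleness term splits into a \emph{within-block} piece, which the three-point identity \eqref{Pyth} turns into precisely $\rho_{t}\Delta_{B^{T}B}(\xi_{2},v,v^{+})-\frac{\rho_{t}}{2}\norm{v^{+}-v}_{B^{T}B}^{2}$ (resp.\ the $A^{T}A$ analogue), and a \emph{residual-coupled} piece $\rho_{t}\act{B(v^{+}-v),\AAA z^{+}-b}$ (resp.\ $A$). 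Because $\rho_{t}=\rho\tau_{t}$, adding the within-block $\Delta$-terms to the $\tau_{t}\Delta_{M_{1}}$, $\tau_{t}\Delta_{M_{2}}$ coming from Lemma \ref{L:ProxIneqLag} yields exactly $\tau_{t}\Delta_{P_{1}}$ and $\tau_{t}\Delta_{P_{2}}$ with $P_{1}=M_{1}+\rho A^{T}A$ and $P_{2}=M_{2}+\rho B^{T}B$.

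The step I expect to be the main obstacle is disposing of the two residual-coupled terms, since this is what dictates both the admissible range of $M_{1},M_{2}$ and the factor $2$ in $\delta$. I would bound each by the basic inequality \eqref{basic} in its Young upper-bound form, spending $\frac{\alpha_{i}\rho_{t}}{2}\norm{\AAA z^{+}-b}^{2}$ and creating a term $\frac{\rho_{t}}{2\alpha_{i}}\norm{\cdot}^{2}$ in the corresponding block's increment. These increment leftovers are absorbed by the negative quantities $-\frac{\tau_{t}}{2}\norm{u^{+}-u}_{P_{1}}^{2}$ and $-\frac{\tau_{t}}{2}\norm{v^{+}-v}_{P_{2}}^{2}$ (the $M_{i}$-proximal term together with the $\rho$-quadratic term assembled above); demanding non-positivity amounts to $\rho\frac{1-\alpha_{i}}{\alpha_{i}}\lambda_{max}\leq\lambda_{min}(M_{i})$, whose equality case pins down $\alpha_{1}$ and $\alpha_{2}$ as stated and leaves $Q_{1}=Q_{2}=0$. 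Collecting the feasibility terms gives coefficient $-\frac{\rho_{t}}{2}\left(1-\alpha_{1}-\alpha_{2}\right)$ on $\norm{\AAA z^{+}-b}^{2}$, and since $1-\alpha_{1}-\alpha_{2}\geq 1-2\max\{\alpha_{1},\alpha_{2}\}=\delta$, the required bound $-\frac{\delta\rho_{t}}{2}\norm{\AAA z^{+}-b}^{2}$ follows (with $\delta\in\left(0,1\right]$ precisely when $M_{1},M_{2}$ are large enough relative to $\rho\lambda_{max}(A^{T}A),\rho\lambda_{max}(B^{T}B)$), completing the verification against Definition \ref{D:AlgoMapNiceB}.
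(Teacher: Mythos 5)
Your proposal is correct and follows essentially the route the paper intends: the paper omits the proof of this lemma, deferring to the PCPM argument (Lemma \ref{L:NicePCPM}), and your proof is exactly that argument adapted to the non-linearized penalties — Lemma \ref{L:ProxIneqLag} per block, the residual decompositions $Au^{+}+Bv-b=(\AAA z^{+}-b)-B(v^{+}-v)$ and $Au+Bv^{+}-b=(\AAA z^{+}-b)-A(u^{+}-u)$, Young's inequality \eqref{basic} with $\delta=1-2\max\{\alpha_{1},\alpha_{2}\}\geq 1-\alpha_{1}-\alpha_{2}$, and the eigenvalue comparison fixing $\alpha_{1},\alpha_{2}$ with $Q_{1}=Q_{2}=0$. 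The one genuinely Jacobi-specific ingredient — the three-point identity applied to the retained quadratic curvature, which is what upgrades $M_{i}$ to $P_{1}=M_{1}+\rho A^{T}A$, $P_{2}=M_{2}+\rho B^{T}B$ and puts $\rho\lambda_{max}$ into the denominators of $\alpha_{1},\alpha_{2}$ (mirroring \eqref{L:NiceLAL:2} in Lemma \ref{L:NiceLAL}) — you identify and execute correctly, so the proof is complete.
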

	We omit the proof since it is similar to the more practical scheme described next.

\subsubsection{Example VII: Predictor Corrector Proximal Multiplier Method} \label{SSSec:PCPM}
	In continuation of the JDMM presented above, we would like also to consider the version which consists of linearizing the augmented term. This method, proposed by Chen and Teboulle in \cite{CT1994}, is known in the literature as Predictor Corrector Proximal Multipliers (PCPM). For the sake of completeness, since we did not present the proof of the Proximal JDMM, we prove now that PCPM is \nice \hspace{-0.05in}. As mentioned above, here we should consider that both functions $f$ and $g$ are treated the same. Thus, in this part we assume that $f$ is $\sigma_{f}$-strongly convex function with $\sigma_{f} \geq 0$ and $g$ is $\sigma_{g}$-strongly convex function with $\sigma_{g} \geq 0$, \ie either $\sigma_{f} , \sigma_{g} > 0$ or $\sigma_{f} = \sigma_{g} = 0$.
	\begin{lemma}[PCPM is nice] \label{L:NicePCPM}
		Let $M_{1} , M_{2} \succeq 0$ with $\lambda_{min}(M_{1}) > \rho\lambda_{max}(A^{T}A)$ and $\lambda_{min}(M_{2}) > \rho\lambda_{max}(B^{T}B)$, the primal algorithmic map $\SSS_{t}\left(\cdot\right)$ defined by
		\begin{align}
			u^{+} & = \argmin_{\xi_{1}} \left\{ f\left(\xi_{1}\right) + \act{\lambda , A\xi_{1} + Bv - b} + \rho_{t}\act{Au + Bv - b , A\xi_{1}} + \frac{\tau_{t}}{2}\norm{\xi_{1} - u}_{M_{1}}^{2} \right\}, \label{PCPM:Prim1} \\
			v^{+} & = \argmin_{\xi_{2}} \left\{ g\left(\xi_{2}\right) + \act{\lambda , Au + B\xi_{2} - b} + \rho_{t}\act{Au + Bb - b , B\xi_{2}} + \frac{\tau_{t}}{2}\norm{\xi_{2} - v}_{M_{2}}^{2} \right\}, \label{PCPM:Prim2}
		\end{align}				
		is nice with $\delta = 1 - 2\max\{\alpha_{1} , \alpha_{2}\}$ where $\alpha_{1} = \rho\lambda_{max}(A^{T}A)/\lambda_{min}(M_{1})$ and $\alpha_{2} = \rho\lambda_{max}(B^{T}B)/\lambda_{min}(M_{2})$, $P_{1} = M_{1}$, $P_{2} = M_{2}$ and $Q_{1} = Q_{2} = 0$.
	\end{lemma}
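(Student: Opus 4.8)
The plan is to mirror the proof of Lemma~\ref{L:NiceLADM}, applying the composite proximal inequality (Lemma~\ref{L:ProxIneqLag}) separately to the two \emph{parallel} subproblems \eqref{PCPM:Prim1}--\eqref{PCPM:Prim2} and then controlling the resulting cross terms. For the $u$-update I would invoke Lemma~\ref{L:ProxIneqLag} with $\varphi(\cdot)=f(\cdot)$, $c(\xi_{1})=\act{\lambda,A\xi_{1}}+\rho_{t}\act{Au+Bv-b,A\xi_{1}}$ and $W=\tau_{t}M_{1}$, and symmetrically for the $v$-update with $\varphi(\cdot)=g(\cdot)$, $c(\xi_{2})=\act{\lambda,B\xi_{2}}+\rho_{t}\act{Au+Bv-b,B\xi_{2}}$ and $W=\tau_{t}M_{2}$. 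The crucial structural feature of PCPM, in contrast to the Gauss--Seidel ADMM of Lemma~\ref{L:NiceLADM}, is that \emph{both} linearizations are taken at the same old point $Au+Bv-b$, so both subgradient inequalities carry the common linear functional $\lambda+\rho_{t}(Au+Bv-b)$. Adding them and using feasibility $A\xi_{1}+B\xi_{2}=b$ together with $Au^{+}+Bv^{+}=\AAA z^{+}$ collapses the linear parts into $\act{\lambda+\rho_{t}(Au+Bv-b),\AAA z^{+}-b}$, yielding
\begin{align*}
&\Psi(z^{+})-\Psi(\xi)+\act{\lambda,\AAA z^{+}-b}+\rho_{t}\act{Au+Bv-b,\AAA z^{+}-b} \\
&\qquad \leq \tau_{t}\Delta_{M_{1}}(\xi_{1},u,u^{+})+\tau_{t}\Delta_{M_{2}}(\xi_{2},v,v^{+})-\tfrac{\tau_{t}}{2}\norm{u^{+}-u}_{M_{1}}^{2}-\tfrac{\tau_{t}}{2}\norm{v^{+}-v}_{M_{2}}^{2} \\
&\qquad\quad -\tfrac{\sigma_{f}}{2}\norm{\xi_{1}-u^{+}}^{2}-\tfrac{\sigma_{g}}{2}\norm{\xi_{2}-v^{+}}^{2}.
\end{align*}

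Next I would expand the penalty cross term by writing $Au+Bv-b=(\AAA z^{+}-b)-(Au^{+}-Au)-(Bv^{+}-Bv)$, which yields
\begin{equation*}
\act{Au+Bv-b,\AAA z^{+}-b}=\norm{\AAA z^{+}-b}^{2}-\act{Au^{+}-Au,\AAA z^{+}-b}-\act{Bv^{+}-Bv,\AAA z^{+}-b}.
\end{equation*}
Here lies the essential difference from the ADMM computation: because the Jacobi scheme never feeds the freshly updated $u^{+}$ into the $v$-subproblem, \emph{two} error inner products survive, one per block, rather than the single one in \eqref{L:NiceLADM:2}. Substituting back and splitting $\rho_{t}\norm{\AAA z^{+}-b}^{2}$ into two halves identifies $\Lag_{\rho_{t}}(z^{+},\lambda)-\Lag_{\rho_{t}}(\xi,\lambda)$ on the left (recall $\Lag_{\rho_{t}}(\xi,\lambda)=\Psi(\xi)$ for $\xi\in\FFF$), leaving the two cross products and a spare $\tfrac{\rho_{t}}{2}\norm{\AAA z^{+}-b}^{2}$ to be absorbed.

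I would then apply the elementary bound \eqref{basic} twice, with parameter $\alpha_{1}$ to $\rho_{t}\act{Au^{+}-Au,\AAA z^{+}-b}$ and $\alpha_{2}$ to $\rho_{t}\act{Bv^{+}-Bv,\AAA z^{+}-b}$. Using $\norm{Au^{+}-Au}^{2}\leq\lambda_{max}(A^{T}A)\norm{u^{+}-u}^{2}$ and $\norm{u^{+}-u}_{M_{1}}^{2}\geq\lambda_{min}(M_{1})\norm{u^{+}-u}^{2}$ (and likewise in the $v$-block), together with the identity $\rho_{t}=\rho\tau_{t}$, the precise choices $\alpha_{1}=\rho\lambda_{max}(A^{T}A)/\lambda_{min}(M_{1})$ and $\alpha_{2}=\rho\lambda_{max}(B^{T}B)/\lambda_{min}(M_{2})$ make the coefficients of $\norm{u^{+}-u}^{2}$ and $\norm{v^{+}-v}^{2}$ vanish exactly, so the proximal terms fully swallow the two error terms and no residual increment quadratic remains, i.e.\ $Q_{1}=Q_{2}=0$. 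Collecting the feasibility contributions leaves $\tfrac{\rho_{t}}{2}(1-\alpha_{1}-\alpha_{2})$ in front of $-\norm{\AAA z^{+}-b}^{2}$; since $\alpha_{1}+\alpha_{2}\leq 2\max\{\alpha_{1},\alpha_{2}\}$ this is dominated by $\tfrac{\delta\rho_{t}}{2}$ with $\delta=1-2\max\{\alpha_{1},\alpha_{2}\}$, while the hypotheses $\lambda_{min}(M_{1})>\rho\lambda_{max}(A^{T}A)$ and $\lambda_{min}(M_{2})>\rho\lambda_{max}(B^{T}B)$ guarantee $\alpha_{1},\alpha_{2}\in(0,1)$ so that \eqref{basic} applies. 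Reading off $P_{1}=M_{1}$, $P_{2}=M_{2}$, $Q_{1}=Q_{2}=0$, and noting that both blocks carry the same $\tau_{t}$-weight because $f$ and $g$ lie in the same (strong) convexity regime, yields the claimed \nice property. The step I expect to be the main obstacle is the second one: correctly bookkeeping the two surviving cross terms peculiar to the parallel update, and then balancing each against its own proximal metric through two independent parameters in \eqref{basic}. This is precisely where the symmetric quantity $\max\{\alpha_{1},\alpha_{2}\}$ and the factor $2$ enter, and it is the only place where the PCPM analysis genuinely departs from the ADMM one; everything else is a direct transcription of the earlier computations.
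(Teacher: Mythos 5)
Your proposal is correct and coincides with the paper's own proof essentially line for line: the same two applications of Lemma~\ref{L:ProxIneqLag} (with $c(\xi_{i})$ carrying the common linearization point $Au+Bv-b$ and $W=\tau_{t}M_{i}$), the same decomposition of $\rho_{t}\act{Au+Bv-b , \AAA z^{+} - b}$ into two per-block error products bounded via \eqref{basic} with independent parameters $\alpha_{1},\alpha_{2}$, and the same eigenvalue balancing (through $\rho_{t}=\rho\tau_{t}$) that annihilates the increment quadratics and gives $\delta = 1 - 2\max\{\alpha_{1},\alpha_{2}\}$ via $\alpha_{1}+\alpha_{2}\leq 2\max\{\alpha_{1},\alpha_{2}\}$. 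No gaps; your identification of the two surviving cross terms as the sole genuine departure from the ADMM computation is exactly where the paper's proof departs as well.
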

	\begin{proof}
		Applying Lemma \ref{L:ProxIneqLag} with $\varphi\left(\xi\right) = f\left(\xi_{1}\right)$, $c\left(\xi_{1}\right) = \act{\lambda +\rho_{t}\left(Au + Bv- b\right) , A\xi_{1}}$ and $W = \tau_{t}M_{1}$ yields for any $\xi_{1} \in \real^{p}$ that
		\begin{equation*}
			f\left(u^{+}\right) - f\left(\xi_{1}\right) + \act{\lambda + \rho_{t}\left(Au + Bv - b\right) , Au^{+} - A\xi_{1}} \leq \text{RHS}_{1},
		\end{equation*}
		where
		\begin{equation*}
			\text{RHS}_{1} = \tau_{t}\Delta_{M_{1}}\left(\xi_{1} , u , u^{+}\right) - \frac{\tau_{t}}{2}\norm{u^{+} - u}_{M_{1}}^{2} - \frac{\sigma_{f}}{2}\norm{\xi_{1} - u^{+}}^{2}.
		\end{equation*}
		Similarly, by applying Lemma \ref{L:ProxIneqLag} with $\varphi\left(\xi_{2}\right) = g\left(\xi_{2}\right)$, $c\left(\xi_{2}\right) = \act{\lambda + \rho_{t}\left(Au + Bv - b\right) , B\xi_{2}}$ and $W = \tau_{t}M_{2}$ yields for any $\xi \in \real^{q}$ that
		\begin{equation*}
			g\left(v^{+}\right) - g\left(\xi_{2}\right) + \act{\lambda + \rho_{t}\left(Au + Bv - b\right) , Bv^{+} - B\xi_{2}} \leq \text{RHS}_{2},
		\end{equation*}
		where
		\begin{equation*}
			\text{RHS}_{2} = \tau_{t}\Delta_{M_{2}}\left(\xi_{2} , v , v^{+}\right) - \frac{\tau_{t}}{2}\norm{v^{+} - v}_{M_{2}}^{2} - \frac{\sigma_{g}}{2}\norm{\xi_{2} - v^{+}}^{2}.
		\end{equation*}
		By adding these two inequalities and using the fact that $\xi := \left(\xi_{1} , \xi_{2}\right) \in \FFF$, that is $A\xi_{1} + B\xi_{2} = b$, we get
		\begin{align}
			\Psi\left(z^{+}\right) - \Psi\left(\xi\right) + \act{\lambda , \AAA z^{+} - b} + \rho_{t}\act{Au + Bv - b , \AAA z^{+} - b} \leq \text{RHS}_{1} + \text{RHS}_{2}, \label{L:NicePCPM:1}
		\end{align}
		where we used that $Au^{+} + Bv^{+} = \AAA z^{+}$ and $z = \left(u , v\right)$, $\xi := \left(\xi_{1} , \xi_{2}\right)$ , $z^{+} = \left(u^{+} , v^{+}\right)$. Now,  we have		
		\begin{align}
			\act{Au + Bv - b , \AAA z^{+} - b} & = \act{Au^{+} + Bv^{+} - b , \AAA z^{+} - b} + \act{Au + Bv - Au^{+} - Bv^{+} , \AAA z^{+} - b} \nonumber \\
			& = \norm{\AAA z^{+} - b}^{2} + \act{Au - Au^{+} , \AAA z^{+} - b}  + \act{Bv - Bv^{+} , \AAA z^{+} - b}  \nonumber \\
			& \geq \norm{\AAA z^{+} - b}^{2} - \frac{1}{2\alpha_{1}}\norm{Au^{+} - Au}^{2} - \frac{1}{2\alpha_{2}}\norm{Bv^{+} - Bv}^{2} \nonumber \\
			& - \frac{\alpha_{1} + \alpha_{2}}{2}\norm{\AAA z^{+} - b}^{2}, \label{L:NicePCPM:2}
		\end{align}
		where we used again that $Au^{+} + Bv^{+} = \AAA z^{+}$ and the inequality follows by applying \eqref{basic}. Combining \eqref{L:NicePCPM:1} with $\rho_{t}$ times \eqref{L:NicePCPM:2}, we obtain, for all $\alpha < 1$,
		\begin{align*}
			\Psi\left(z^{+}\right) - \Psi\left(\xi\right) + \act{\lambda , \AAA z^{+} - b}+ \frac{\rho_{t}}{2}\norm{\AAA z^{+} - b}^{2} & \leq \text{RHS}_{1} + \frac{\rho_t}{2\alpha_{1}}\norm{Au^{+} - Au}^{2} \\
			&+ \text{RHS}_{2} + \frac{\rho_{t}}{2\alpha_2}\norm{Bv^{+} - Bv}^{2} - \frac{\delta\rho_{t}}{2}\norm{\AAA z^{+} - b}^{2},
		\end{align*}
		where $\delta = 1 - 2\max\{\alpha_{1} , \alpha_{2}\}$ (due to $2\max\{\alpha_{1} , \alpha_{2}\} \geq \alpha_{1} + \alpha_{2}$). In order to determine the best parameter $\alpha_{1} < 1$ we consider the first term of the previous inequality (using the definition of $\text{RHS}_{1}$):
		\begin{equation*}
	\frac{\rho_t}{2} \left \{ \frac{1}{\alpha_{1}}\norm{u^{+} - u}_{A^{T}A}^{2} - \frac{1}{\rho}\norm{u^{+} - u}_{M_{1}}^{2} \right\}  \leq
	\frac{\rho_t}{2} \left \{ \left(\frac{1}{\alpha_{1}}\lambda_{max}(A^{T}A) - \frac{1}{\rho}\lambda_{min}(M_{1})\right)\norm{u^{+} - u}^{2} \right\}.
		\end{equation*}
		Therefore, we will take $\alpha_{1} = \rho\lambda_{max}(A^{T}A)/\lambda_{min}(M_{1})$.  Similarly, we determine $\alpha_{2}$, and the desired result follows.
	\end{proof}		

\subsection{Model (P) with Additional Smooth Function} \label{SSec:Smooth}
	Another situation that might occur in applications, is when the objective function $\Psi$ is given as the sum of two functions $f : \real^{n} \rightarrow \erl$ and $h : \real^{n} \rightarrow \real$, where $f$ is proper, lower semi-continuous and $\sigma$-strongly convex (with $\sigma \geq 0$) while $h$ is convex and continuously differentiable with $L$-Lipschitz continuous gradient, as discussed in Example \ref{E:ASNS}. That is, problems of the following form
	\begin{equation*}
		\min_{x \in \real^{n}} \left\{ f\left(x\right) + h\left(x\right) \, : \, \AAA x = b \right\}.
	\end{equation*}
	
\subsubsection{Example VIII: Proximal Augmented Lagrangian for Composite Objective}
	It is well-known that the best way to tackle $L$-smooth functions in continuous optimization algorithms is via linearization. Therefore, independently of the Lagrangian-based method to be applied on this model, the additional smooth function $h$ can be easily adapted to the FLAG framework. We first record the result for the Proximal Augmented Lagrangian method without a proof; instead see the next result.
	\begin{lemma}[Proximal AL is nice] \label{L:NicePALSmoo}
		Let $M \succeq LI_{n}$, the algorithmic map defined by
		\begin{equation*}
			z^{+} = \SSS_{t}\left(z , \lambda\right) \equiv \argmin_{\xi} \left\{ f\left(\xi\right) + \act{\nabla h\left(z\right) , \xi} + \act{\lambda , \AAA\xi - b} +
			 \frac{\rho_{t}}{2} \| \AAA  \xi - b \|^2 + \frac{\tau_{t}}{2}\norm{\xi - z}_{M}^{2} \right\},
		\end{equation*}
		is nice with $\delta = 1$ and $P = M$ and $Q =  M - LI_{n}$.
	\end{lemma}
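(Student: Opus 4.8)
The plan is to follow the template used in the proof of Lemma \ref{L:NiceLAL}, invoking the composite proximal inequality (Lemma \ref{L:ProxIneqLag}) to handle the nonsmooth strongly convex part $f$ together with the augmented penalty, and then absorbing the smooth part $h$ through its descent inequality. First I would apply Lemma \ref{L:ProxIneqLag} with $\varphi(\cdot) = f(\cdot)$ (which is $\sigma$-strongly convex), with the smooth data function $c(\xi) = \act{\nabla h(z), \xi} + \act{\lambda, \AAA\xi - b} + (\rho_{t}/2)\norm{\AAA\xi - b}^{2}$, and with weight matrix $W = \tau_{t}M$. Since $\nabla c(z^{+}) = \nabla h(z) + \AAA^{T}\lambda + \rho_{t}\AAA^{T}(\AAA z^{+} - b)$ and $\xi \in \FFF$ gives $\AAA\xi = b$, the inner products simplify and one obtains
\begin{equation*}
f(z^{+}) - f(\xi) + \act{\nabla h(z), z^{+} - \xi} + \act{\lambda, \AAA z^{+} - b} + \rho_{t}\norm{\AAA z^{+} - b}^{2} \leq \tau_{t}\Delta_{M}(\xi, z, z^{+}) - \frac{\tau_{t}}{2}\norm{z^{+} - z}_{M}^{2} - \frac{\sigma}{2}\norm{\xi - z^{+}}^{2}.
\end{equation*}

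Next I would dispose of the smooth function $h$. Combining the descent lemma $h(z^{+}) \leq h(z) + \act{\nabla h(z), z^{+} - z} + (L/2)\norm{z^{+} - z}^{2}$ (valid since $\nabla h$ is $L$-Lipschitz) with the gradient inequality $h(z) - h(\xi) \leq \act{\nabla h(z), z - \xi}$ (valid by convexity of $h$) yields $h(z^{+}) - h(\xi) - \act{\nabla h(z), z^{+} - \xi} \leq (L/2)\norm{z^{+} - z}^{2}$. Adding this to the previous inequality, recalling $\Psi = f + h$ and that for $\xi \in \FFF$ one has $\Lag_{\rho_{t}}(\xi, \lambda) = \Psi(\xi)$ while $\Lag_{\rho_{t}}(z^{+}, \lambda) = \Psi(z^{+}) + \act{\lambda, \AAA z^{+} - b} + (\rho_{t}/2)\norm{\AAA z^{+} - b}^{2}$, the cross terms $\act{\nabla h(z), z^{+} - \xi}$ cancel and the penalty coefficient reduces from $\rho_{t}$ to $\rho_{t} - \rho_{t}/2 = \rho_{t}/2$, leaving
\begin{equation*}
\Lag_{\rho_{t}}(z^{+}, \lambda) - \Lag_{\rho_{t}}(\xi, \lambda) \leq \tau_{t}\Delta_{M}(\xi, z, z^{+}) - \frac{\tau_{t}}{2}\norm{z^{+} - z}_{M}^{2} + \frac{L}{2}\norm{z^{+} - z}^{2} - \frac{\sigma}{2}\norm{\xi - z^{+}}^{2} - \frac{\rho_{t}}{2}\norm{\AAA z^{+} - b}^{2}.
\end{equation*}

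Finally I would merge the two quadratic remainder terms. Since $Q = M - LI_{n}$ one has $-\frac{\tau_{t}}{2}\norm{z^{+} - z}_{Q}^{2} = -\frac{\tau_{t}}{2}\norm{z^{+} - z}_{M}^{2} + \frac{\tau_{t}L}{2}\norm{z^{+} - z}^{2}$, and because $\tau_{t} \geq 1$ (it equals $1$ in the convex case, and equals $t_{k} \geq 1$ in the strongly convex case) we have $\frac{L}{2}\norm{z^{+} - z}^{2} \leq \frac{\tau_{t}L}{2}\norm{z^{+} - z}^{2}$; hence $-\frac{\tau_{t}}{2}\norm{z^{+} - z}_{M}^{2} + \frac{L}{2}\norm{z^{+} - z}^{2} \leq -\frac{\tau_{t}}{2}\norm{z^{+} - z}_{Q}^{2}$. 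Substituting this bound and identifying $P = M$ (so $\Delta_{M} = \Delta_{P}$) reproduces exactly the inequality of Definition \ref{D:AlgoMapNice} with $\delta = 1$, $P = M$ and $Q = M - LI_{n}$; the hypothesis $M \succeq LI_{n}$ guarantees $Q \in \bbS_{+}^{n}$.

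The only step demanding real care is the treatment of the smooth term: one must split $h(z^{+}) - h(\xi)$ through the current point $z$ so that the linearization point of $\nabla h(z)$ matches the one used in the prox step, producing precisely the $(L/2)\norm{z^{+} - z}^{2}$ surplus that the margin $M - LI_{n}$ is designed to absorb. The passage $\tau_{t} \geq 1$ is the sole place where the strongly convex case fails to be an exact identity, and it is where the requirement that $M$ dominate $LI_{n}$ (rather than $\tau_{t}LI_{n}$) is vindicated.
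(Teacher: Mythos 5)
Your proof is correct and follows exactly the route the paper intends: the paper omits a proof for this lemma and points to Lemma \ref{L:NiceLALSmoo}, whose argument (Lemma \ref{L:ProxIneqLag} applied to $f$ with the smooth and penalty terms folded into $c$, followed by the three-points descent bound $h(z^{+}) - h(\xi) - \act{\nabla h(z), z^{+} - \xi} \leq \frac{L}{2}\norm{z^{+} - z}^{2}$) is precisely what you carry out, simplified here because keeping the quadratic penalty unlinearized removes the $\rho\AAA^{T}\AAA$ correction and yields $\delta = 1$ directly. Your absorption step via $\tau_{t} \geq 1$ is the same device the paper uses in Lemma \ref{L:NiceLALSmoo} in the equivalent form $\rho_{t}/\rho \geq 1$, so the two arguments coincide.
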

	
\subsubsection{Example IX: Proximal Linearized Augmented Lagrangian for Composite Objective}	
	Handling a smooth function in the given objective function is independent of the Lagrangian-based method that is used, and therefore will affect in a similar way (cf. Lemma \ref{L:NicePALSmoo}) any Lagrangian-based method such as ADMM, Proximal Linearized ADMM, CP, Proximal JDMM and PCPM (the value of the corresponding $\delta$ might changed too). For example, if we apply the Linearized Proximal AL method we will also have a \nice primal algorithmic map as shown below.
	\begin{lemma}[Linearized Proximal AL is nice] \label{L:NiceLALSmoo}
		Let $M \succeq \rho\AAA^{T}\AAA + LI_{n}$, the algorithmic map defined by
		\begin{equation*}
			z^{+} = \SSS_{t}\left(z , \lambda\right) \equiv \argmin_{\xi} \left\{ f\left(\xi\right) + \act{\nabla h\left(z\right) , \xi}  + \act{\lambda , \AAA\xi - b} + \rho_{t}\act{\AAA z - b , \AAA\xi} + \frac{\tau_{t}}{2}\norm{\xi - z}_{M}^{2} \right\},
		\end{equation*}
		is nice with $\delta = 1$ and $P = M - \rho\AAA^{T}\AAA$ and $Q = M - \rho\AAA^{T}\AAA - LI_{n}$.
	\end{lemma}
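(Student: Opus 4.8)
The plan is to follow the same two-step template as the proof of Lemma~\ref{L:NiceLAL}, augmented by one device for the smooth part $h$. First I would apply the composite proximal inequality (Lemma~\ref{L:ProxIneqLag}) to the defining minimization, taking $\varphi=f$, the affine surrogate $c(\xi)=\act{\nabla h(z),\xi}+\act{\lambda,\AAA\xi}+\rho_t\act{\AAA z-b,\AAA\xi}$ (dropping the constants independent of $\xi$), and weight $W=\tau_t M$. Since $c$ is affine, $\nabla c(z^{+})=\nabla h(z)+\AAA^T\lambda+\rho_t\AAA^T(\AAA z-b)$ is constant, so expanding $\act{\nabla c(z^{+}),z^{+}-\xi}$ produces exactly the three linear terms $\act{\nabla h(z),z^{+}-\xi}$, $\act{\lambda,\AAA z^{+}-b}$ (using $\AAA\xi=b$ for $\xi\in\FFF$), and $\rho_t\act{\AAA z-b,\AAA z^{+}-b}$, while the right-hand side is $\tau_t\Delta_M(\xi,z,z^{+})-\frac{\tau_t}{2}\norm{z^{+}-z}_M^2-\frac{\sigma}{2}\norm{\xi-z^{+}}^2$; the last term already matches the $\sigma$-term required by Definition~\ref{D:AlgoMapNice}.

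Next I would turn the three linear terms back into the augmented Lagrangian gap. For the smooth term, combining the descent lemma $h(z^{+})\le h(z)+\act{\nabla h(z),z^{+}-z}+\frac{L}{2}\norm{z^{+}-z}^2$ with the gradient inequality $h(\xi)\ge h(z)+\act{\nabla h(z),\xi-z}$ yields $\act{\nabla h(z),z^{+}-\xi}\ge h(z^{+})-h(\xi)-\frac{L}{2}\norm{z^{+}-z}^2$; since this term sits on the left-hand side, replacing it by its lower bound only strengthens the inequality and recovers $f(z^{+})-f(\xi)+h(z^{+})-h(\xi)=\Psi(z^{+})-\Psi(\xi)$ at the cost of an extra $+\frac{L}{2}\norm{z^{+}-z}^2$. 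For the penalty term I would reuse identity~\eqref{L:NiceLAL:2}, namely $\act{\AAA z-b,\AAA z^{+}-b}=\norm{\AAA z^{+}-b}^2+\Delta_{\AAA^T\AAA}(\xi,z,z^{+})-\frac12\norm{z^{+}-z}_{\AAA^T\AAA}^2$. Collecting $\Psi(z^{+})-\Psi(\xi)$, the term $\act{\lambda,\AAA z^{+}-b}$, and the full $\rho_t\norm{\AAA z^{+}-b}^2$ assembles $\Lag_{\rho_t}(z^{+},\lambda)-\Lag_{\rho_t}(\xi,\lambda)$ together with a leftover $\frac{\rho_t}{2}\norm{\AAA z^{+}-b}^2$, which is precisely the feasibility term with $\delta=1$.

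It then remains to reconcile the matrix terms, and this is where the only genuinely non-mechanical point arises. Using $\rho_t=\rho\tau_t$ and the linearity of $\Delta$ and of the semi-norms in their matrix argument, the combination $\tau_t\Delta_M(\xi,z,z^{+})-\rho_t\Delta_{\AAA^T\AAA}(\xi,z,z^{+})$ collapses to $\tau_t\Delta_P(\xi,z,z^{+})$ with $P=M-\rho\AAA^T\AAA$, which is positive semidefinite by the hypothesis $M\succeq\rho\AAA^T\AAA+LI_n$. The curvature pieces in $z^{+}-z$ give $-\frac{\tau_t}{2}\norm{z^{+}-z}_{M-\rho\AAA^T\AAA}^2+\frac{L}{2}\norm{z^{+}-z}^2$, whereas the target $-\frac{\tau_t}{2}\norm{z^{+}-z}_Q^2$ with $Q=M-\rho\AAA^T\AAA-LI_n$ carries the Lipschitz correction scaled by $\tau_t$. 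These coincide exactly in the convex case ($\tau_t=1$), and in the strongly convex case one only needs $\frac{L}{2}\norm{z^{+}-z}^2\le\frac{\tau_t L}{2}\norm{z^{+}-z}^2$, which holds because $\tau_t=t\ge 1$ throughout FLAG (as $t_0=1$ and $\seq{t}{k}$ is nondecreasing). This single inequality is the hard part — it would fail for $\tau_t<1$ — but once it is in place, $Q\succeq 0$ follows from the hypothesis and the \nice property with the stated $\delta$, $P$, $Q$ is established.
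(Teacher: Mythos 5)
Your proof is correct and follows essentially the same route as the paper's: the same application of Lemma~\ref{L:ProxIneqLag} with $\varphi=f$, the affine $c$, and $W=\tau_t M$; your descent-lemma-plus-convexity bound on $\act{\nabla h(z),z^{+}-\xi}$ is exactly the Three-Points Descent Lemma the paper invokes; and the reduction to Lemma~\ref{L:NiceLAL}'s identity \eqref{L:NiceLAL:2} matches the paper's closing step. You also correctly isolate the one subtle point, $\frac{L}{2}\norm{z^{+}-z}^2\le\frac{\tau_t L}{2}\norm{z^{+}-z}^2$ via $\tau_t\ge 1$, which is precisely the paper's bound $\frac{L}{2}\norm{z^{+}-z}^2\le\frac{L\rho_t}{2\rho}\norm{z^{+}-z}^2$ using $\rho_t/\rho=\tau_t\ge 1$.
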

	\begin{proof}
		We apply Lemma \ref{L:ProxIneqLag} on the functions $\varphi\left(\cdot\right) = f\left(\cdot\right)$, $c\left(\xi\right) = \act{\nabla h\left(z\right) , \xi} + \act{\lambda + \rho_{t}\left(\AAA z - b\right) , \AAA\xi}$ and $W = \tau_{t}M$ to obtain, for any $\xi \in \real^{n}$, that
		\begin{align*}
			f\left(z^{+}\right) - f\left(\xi\right) + \act{\lambda + \rho_{t}\left(\AAA z - b\right) , \AAA z^{+} - \AAA\xi} + \act{\nabla h\left(z\right) , z^{+} - \xi} & \leq \tau_{t}\Delta_{M}\left(\xi , z , z^{+}\right) - \frac{\tau_{t}}{2}\norm{z^{+} - z}_{M}^{2} \\
			&  - \frac{\sigma}{2}\norm{\xi - z^{+}}^{2}.
		\end{align*}
		Using the Three-Points Descent Lemma applied on the smooth convex function $h$, we get that
		\begin{equation*}
			h\left(z^{+}\right) - h\left(\xi\right) - \act{\nabla h\left(z\right) , z^{+} - \xi} \leq \frac{L}{2}\norm{z^{+} - z}^{2} \leq \frac{L\rho_{t}}{2\rho}\norm{z^{+} - z}^{2},
		\end{equation*}
		where the last inequality follows from the fact that $\rho_{t}/\rho \geq 1$. Combining these inequalities shows that
		\begin{equation*}
			\Psi\left(z^{+}\right) - \Psi\left(\xi\right) + \act{\lambda , \AAA z^{+} - b} + \rho_{t}\act{\AAA z - b , \AAA z^{+} - \AAA\xi}  \leq \tau_{t}\Delta_{M}\left(\xi , z , z^{+}\right) - \frac{\tau_{t}}{2}\norm{z^{+} - z}_{M - LI_{n}}^{2} - \frac{\sigma}{2}\norm{\xi - z^{+}}^{2}.
		\end{equation*}
		The result now follows using the same arguments as we used in the proof of Lemma \ref{L:NiceLAL}.
	\end{proof}		

\section{Conclusion}
	This paper studies convergence rate analysis of Lagrangian-based methods for convex optimization problems with linear equality constraints. We first introduce the notion of {\em Nice Primal Algorithmic Map}, which plays a central role in the unification and in the simplification of the analysis of most Lagrangian-based methods. Equipped with a \nice primal algorithmic map, we then introduce a simple versatile generic Faster LAGrangian-based (FLAG) method, which allows for the design and analysis of faster schemes with new provably sublinear rate of convergence expressed in terms of functions values and feasibility violation of the {\em original generated sequence}. Classical and fast {\em ergodic} rate of convergence are also obtained as a by-product of our general framework.

\section{Appendix A. Rate of Convergence in the Block Setting}
	For the reader convenience we state and prove Lemmas \ref{L:FGLIter-B} and \ref{L:FGLMain - B}.
	\begin{lemma}
		Let $\left\{ \left(x^{k} , z^{k} , y^{k}\right) \right\}_{k \in \nn}$ be a sequence generated by FLAG. Then, for any $\xi \in \FFF$, $\eta \in \real^{m}$ and $k \geq 0$, we have
		\begin{align*}
			\Lag_{\rho_{k}}\left(z^{k + 1} , \eta\right) - \Lag_{\rho_{k}}\left(\xi , \eta\right) & \leq \frac{1}{t_{k}}\Delta_{P_{1}}\left(\xi_{1} , u^{k} , u^{k + 1}\right) + \tau_{k}\Delta_{P_{2}}\left(\xi_{2} , v^{k} , v^{k + 1}\right) - \frac{\sigma}{2}\norm{\xi_{2} - v^{k + 1}}^{2} \\
			& + \frac{1}{\mu\rho_{k}}\Delta\left(\eta , y^{k} , y^{k + 1}\right) - \rho t_{k - 1}^{p}\act{\AAA x^{k} - b , \AAA z^{k + 1} - b}.
		\end{align*}		
	\end{lemma}
	\begin{proof}
		Since $\left\{ \left(x^{k} , z^{k} , y^{k}\right) \right\}_{k \in \nn}$ is generated by FLAG, we obtain from Definition \ref{D:AlgoMapNiceB} (after omitting the non-negative terms related to $Q_{1}$ and $Q_{2}$), that
		\begin{equation} \label{L:FGLIter-B:1}
			\Lag_{\rho_{k}}\left(z^{k + 1} , \lambda^{k}\right) - \Lag_{\rho_{k}}\left(\xi , \lambda^{k}\right) \leq \frac{1}{t_{k}}\Delta_{P_{1}}\left(\xi_{1} , u^{k} , u^{k + 1}\right) + \tau_{k}\Delta_{P_{2}}\left(\xi_{2} , v^{k} , v^{k + 1}\right) - \frac{\sigma}{2}\norm{\xi_{2} - v^{k + 1}}^{2} - \frac{\delta\rho_{k}}{2}\norm{\AAA z^{k + 1} - b}^{2}.
		\end{equation}
		From the multiplier update \eqref{FGL:MultiStep} and the three-points identity \eqref{Pyth}, we obtain, for all $\eta \in \real^{m}$,
		\begin{align}
			\act{\eta - y^{k} , \AAA z^{k + 1} - b}  = \frac{1}{\mu\rho_{k}}\act{\eta - y^{k} , y^{k + 1} - y^{k}} & = \frac{1}{\mu\rho_{k}}\Delta\left(\eta , y^{k} , y^{k + 1}\right) + \frac{1}{2\mu\rho_{k}}\norm{y^{k + 1} - y^{k}}^{2} \nonumber \\
			& = \frac{1}{\mu\rho_{k}}\Delta\left(\eta , y^{k} , y^{k + 1}\right) + \frac{\mu\rho_{k}}{2}\norm{\AAA z^{k + 1} - b}^{2}. \label{L:FGLIter-B:2}
		\end{align}
		Using the update rule of the sequence $\seq{t}{k}$ (\cf \eqref{FGL:Tk}) we have that $\rho_{k}\left(t_{k} - 1\right) =\rho t_{k - 1}^{p} $ and thus
		\begin{equation*}
			\lambda^{k} = y^{k} + \rho_{k}\left(t_{k} - 1\right)\left(\AAA x^{k} -b\right) = y^{k} + \rho t_{k - 1}^{p}\left(\AAA x^{k} -b\right).
		\end{equation*}
		Using this together with \eqref{L:FGLIter:2} yields, for all $\eta \in \real^{m}$, that
		\begin{align}		
			\act{\eta - \lambda^{k} , \AAA z^{k + 1} - b} & = \act{\eta - y^{k} , \AAA z^{k + 1} - b} - \rho t_{k - 1}^{p}\act{\AAA x^{k} - b , \AAA z^{k + 1} - b} \nonumber \\
			& = \frac{1}{\mu\rho_{k}}\Delta\left(\eta , y^{k} , y^{k + 1}\right) + \frac{\mu\rho_{k}}{2}\norm{\AAA z^{k + 1} - b}^{2} - \rho t_{k - 1}^{p}\act{\AAA x^{k} - b , \AAA z^{k + 1} - b}. \label{L:FGLIter-B:3}
		\end{align}
		Adding \eqref{L:FGLIter-B:1} to \eqref{L:FGLIter-B:3} yields (recall that $\mu \leq \delta$) the desired result.
	\end{proof}	
	\begin{lemma} 
		Let $\left\{ \left(x^{k} , z^{k} , y^{k}\right) \right\}_{k \in \nn}$ be a sequence generated by FLAG. Then, for any $\xi \in \FFF$, $\eta \in \real^{m}$ and $k \geq 0$, we have
		\begin{equation*}
			t_{k}^{p}s_{k + 1} - t_{k - 1}^{p}s_{k} \leq \frac{\rho_{k}}{t_{k}\rho}\Delta_{P_{1}}\left(\xi_{1} , u^{k} , u^{k + 1}\right) + \frac{\tau_{k}\rho_{k}}{\rho}\Delta_{P_{2}}\left(\xi_{2} , v^{k} , v^{k + 1}\right) - \frac{\rho_{k}\sigma}{2\rho}\norm{\xi_{2} - v^{k + 1}}^{2} + \frac{1}{\mu\rho}\Delta\left(\eta , y^{k} , y^{k + 1}\right),
		\end{equation*}
		where $s_{k} = \Lag_{\rho t_{k - 1}^{p}}\left(x^{k} , \eta\right) - \Lag_{\rho t_{k - 1}^{p}}\left(\xi , \eta\right)$ (\cf \ref{laggap}).
	\end{lemma}	
	\begin{proof}
		Identically to the proof of Lemma \ref{L:FGLMain} we have that
		\begin{equation*}
			t_{k}^{p}s_{k + 1} - t_{k - 1}^{p}s_{k} \leq t_{k}^{p - 1}\left(\Lag_{\rho_{k}}\left(z^{k + 1} , \eta\right) - \Lag_{\rho_{k}}\left(\xi , \eta\right)\right) + \rho_{k}t_{k - 1}^{p}\act{\AAA x^{k} - b , \AAA z^{k + 1} - b}.
		\end{equation*}			
		From Lemma \ref{L:FGLIter-B}, after we multiplied both sides by $t_{k}^{p - 1}$ (recall that $\rho_{k} = \rho t_{k}^{p - 1}$), we obtain that
		\begin{align*}
			t_{k}^{p - 1}\left(\Lag_{\rho_{k}}\left(z^{k + 1} , \eta\right) - \Lag_{\rho_{k}}\left(\xi , \eta\right)\right) + \rho_{k}t_{k - 1}^{p}\act{\AAA x^{k} - b , \AAA z^{k + 1} - b} & \leq \frac{\rho_{k}}{t_{k}\rho}\Delta_{P_{1}}\left(\xi_{1} , u^{k} , u^{k + 1}\right) + \frac{\tau_{k}\rho_{k}}{\rho}\Delta_{P_{2}}\left(\xi_{2} , v^{k} , v^{k + 1}\right) \\
			& - \frac{\rho_{k}\sigma}{2\rho}\norm{\xi_{2} - v^{k + 1}}^{2} + \frac{1}{\mu\rho}\Delta\left(\eta , y^{k} , y^{k + 1}\right).
		\end{align*}		
		By combining the last two inequalities, we thus get
		\begin{equation*}
			t_{k}^{p}s_{k + 1} - t_{k - 1}^{p}s_{k} \leq \frac{\rho_{k}}{t_{k}\rho}\Delta_{P_{1}}\left(\xi_{1} , u^{k} , u^{k + 1}\right) + \frac{\tau_{k}\rho_{k}}{\rho}\Delta_{P_{2}}\left(\xi_{2} , v^{k} , v^{k + 1}\right) - \frac{\rho_{k}\sigma}{2\rho}\norm{\xi_{2} - v^{k + 1}}^{2} + \frac{1}{\mu\rho}\Delta\left(\eta , y^{k} , y^{k + 1}\right),
		\end{equation*}				
		which proves the desired result.
	\end{proof}		
	Similarly to the proofs of the non-block case (see Section 4), we prove our main result in the strongly convex setting (\ie $p = 2$) as follows.
	\begin{theorem}[A fast non-ergodic function values and feasibility violation rates] \label{T:FGLrate-B}
		Let $\left\{ \left(x^{k} , z^{k} , y^{k}\right) \right\}_{k \in \nn}$ be a sequence generated by FLAG. Suppose that $\sigma > 0$ and $0 \preceq P_{2} \preceq \left(\sigma/2\right)I_{q}$. Then, for any optimal solution $x^{\ast}$ of problem (P), we have
   		\begin{align}
			\Psi\left(x^{N}\right) - \Psi\left(x^{\ast}\right) & \leq \frac{B_{\rho , c}\left(x^{\ast}\right)}{2N^{2}}, \label{T:FGLrate-B:1} \\
       		\norm{\AAA x^{N} - b} & \leq \frac{B_{\rho , c}\left(x^{\ast}\right)}{cN^{2}}, \label{T:FGLrate-B:2}
		\end{align}
		where $B_{\rho , c}\left(x^{\ast}\right) := 4\left(\norm{x^{\ast} - z^{0}}_{P}^{2} + \frac{1}{\mu\rho}\left(\norm{y^{0}} + c\right)^{2}\right)$.
	\end{theorem}
	\begin{proof}
		From Lemma \ref{L:FGLMain - B}, after we substitute $\xi = x^{\ast} = (u^{\ast} , v^{\ast})^{T}$ and $p = 2$ (recall that $\rho_{k} = \rho t_{k}$ and $\tau_{k} = t_{k}$), we obtain
		\begin{equation*}
			t_{k}^{2}s_{k + 1} - t_{k - 1}^{2}s_{k} \leq \frac{\rho_{k}}{t_{k}\rho}\Delta_{P_{1}}\left(u^{\ast} , u^{k} , u^{k + 1}\right) + \frac{\rho_{k}\tau_{k}}{\rho}\Delta_{P_{2}}\left(v^{\ast} , v^{k} , v^{k + 1}\right) - \frac{\sigma\rho_{k}}{2\rho}\norm{v^{\ast} - v^{k + 1}}^{2} + \frac{1}{\mu\rho}\Delta\left(\eta , y^{k} , y^{k + 1}\right).
		\end{equation*}
		Using the definition of $\Delta_{P_{1}}$, $\Delta_{P_{2}}$ and $\Delta$ (see \eqref{D:Delta}) and the fact that $P_{2} \preceq \left(\sigma/2\right)I_{q}$ we have that
		\begin{align*}
			t_{k}^{2}s_{k + 1} - t_{k - 1}^{2}s_{k} & \leq \frac{1}{2}\left(\norm{u^{\ast} - u^{k}}_{P_{1}}^{2} - \norm{u^{\ast} - u^{k + 1}}_{P_{1}}^{2}\right) + \frac{t_{k}^{2}}{2}\left(\norm{v^{\ast} - v^{k}}_{P_{2}}^{2} - \norm{v^{\ast} - v^{k + 1}}_{P_{2}}^{2}\right) - t_{k}\norm{v^{\ast} - v^{k + 1}}_{P_{2}}^{2} \\
			& + \frac{1}{2\mu\rho}\left(\norm{\eta - y^{k}}^{2} -  \norm{\eta - y^{k + 1}}^{2}\right) \nonumber \\
			& \leq \frac{1}{2}\left(\norm{u^{\ast} - u^{k}}_{P_{1}}^{2} - \norm{u^{\ast} - u^{k + 1}}_{P_{1}}^{2} \right)+ \frac{1}{2}\left(t_{k}^{2}\norm{v^{\ast} - v^{k}}_{P_{2}}^{2} - t_{k + 1}^{2}\norm{v^{\ast} - v^{k + 1}}_{P_{2}}^{2}\right) \\
			& + \frac{1}{2\mu\rho}\left(\norm{\eta - y^{k}}^{2} - \norm{\eta - y^{k + 1}}^{2}\right),
		\end{align*}
		where the last inequality follows from Lemma \ref{L:Tk}(ii). The desired estimates \eqref{T:FGLrate-B:1} and \eqref{T:FGLrate-B:2} are now proved by following the same arguments as in the proof of Theorem \ref{T:FGLrate}.
	\end{proof}		
	Our main result in the convex setting (\ie $\sigma = 0$ and $p = 1$) on the sequence itself is recorded next.
	\begin{theorem}[A non-ergodic function values and feasibility violation rates] \label{T:GLrate-B}
		Let $\left\{ \left(x^{k} , z^{k} , y^{k}\right) \right\}_{k \in \nn}$ be a sequence generated by FLAG and suppose that $\sigma = 0$. Then, for any optimal solution $x^{\ast}$ of problem (P), we have
   		\begin{align}
			\Psi\left(x^{N}\right) - \Psi\left(x^{\ast}\right) & \leq \frac{B_{\rho , c}\left(x^{\ast}\right)}{2N}, \label{T:GLrate:1} \\
       		\norm{\AAA x^{N} - b} & \leq \frac{B_{\rho , c}\left(x^{\ast}\right)}{cN}, \label{T:GLrate:2}
		\end{align}
		where $B_{\rho , c}\left(x^{\ast}\right) := 2\left(\norm{x^{\ast} - z^{0}}_{P}^{2} + \frac{1}{\mu\rho}\left(\norm{y^{0}} + c\right)^{2}\right)$.
	\end{theorem}
	\begin{proof}
		From Lemma \ref{L:FGLMain - B}, after we substitute $\xi = x^{\ast} = (u^{\ast} , v^{\ast})^{T}$, $\sigma = 0$ and $p = 1$ (recall that $\rho_{k} = \rho$ and $\tau_{k} = 1$), we obtain
		\begin{equation*}
			t_{k}s_{k + 1} - t_{k - 1}s_{k} \leq \frac{1}{t_{k}}\Delta_{P_{1}}\left(u_{1}^{\ast} , u^{k} , u^{k + 1}\right) + \Delta_{P_{2}}\left(v^{\ast} , v^{k} , v^{k + 1}\right)  + \frac{1}{\mu\rho}\Delta\left(\eta , y^{k} , y^{k + 1}\right).
		\end{equation*}
		Using the definition of $\Delta_{P_{1}}$, $\Delta_{P_{2}}$ and $\Delta$ (see \eqref{D:Delta}) we have that
		\begin{align*}
			t_{k}s_{k + 1} - t_{k - 1}s_{k} & \leq \frac{1}{2t_{k}}\left(\norm{u^{\ast} - u^{k}}_{P_{1}}^{2} - \norm{u^{\ast} - u^{k + 1}}_{P_{1}}^{2}\right) + \frac{1}{2}\left(\norm{v^{\ast} - v^{k}}_{P_{2}}^{2} - \norm{v^{\ast} - v^{k + 1}}_{P_{2}}^{2}\right) \\
			& + \frac{1}{2\mu\rho}\left(\norm{\eta - y^{k}}^{2} -  \norm{\eta - y^{k + 1}}^{2}\right) \\
			& \leq \frac{1}{2}\left(\frac{1}{t_{k}}\norm{u^{\ast} - u^{k}}_{P_{1}}^{2} - \frac{1}{t_{k + 1}}\norm{u^{\ast} - u^{k + 1}}_{P_{1}}^{2}\right) + \frac{1}{2}\left(\norm{v^{\ast} - v^{k}}_{P_{2}}^{2} - \norm{v^{\ast} - v^{k + 1}}_{P_{2}}^{2}\right) \\
			& + \frac{1}{2\mu\rho}\left(\norm{\eta - y^{k}}^{2} -  \norm{\eta - y^{k + 1}}^{2}\right),
		\end{align*}
		where the second inequality follows from the fact that $-t_{k}^{-1} \leq -t_{k + 1}^{-1}$. Summing this inequality for all $k = 0 , 1 , \ldots , N - 1$ (recall that $t_{-1} = 0$ and $t_{0} =1$) it follows
		\begin{equation*}
			t_{N - 1}\left(\Psi\left(x^{N}\right) - \Psi\left(x^{\ast}\right) + \act{\eta , \AAA x^{N} - b}\right) \leq	t_{N - 1}s_{N} - t_{-1}s_{0} = t_{N - 1}s_{N}\leq \frac{1}{2}\norm{x^{\ast} - z^{0}}_{P}^{2} + \frac{1}{2\mu\rho}\norm{\eta - y^{0}}^{2}.
		\end{equation*}
		Therefore, by taking the maximum of both sides over $\norm{\eta} \leq c$, proves that (recall that $t_{N - 1} = N$ using Lemma \ref{L:Tk}(i))
		\begin{equation*}
			\Psi\left(x^{N}\right) - \Psi\left(x^{\ast}\right) + c\norm{\AAA x^{N} - b} \leq \frac{B_{\rho , c}\left(x^{\ast}\right)}{2N},
		\end{equation*}
		and the desired results follow exactly as done at the end of the proof of Theorem \ref{T:FGLrate}.
	\end{proof}	
\bibliographystyle{plain}
\bibliography{notes}

\end{document}